\tikzset{
	level 1/.style = {sibling distance = 1.5cm},
	level 2/.style = {sibling distance = 0.8cm},
    level distance = 0.9 cm
}
\tikzstyle{snakeline} = [decorate, decoration={snake, amplitude=.4mm, segment length=2mm}]
\tikzset{every tree node/.style={minimum width=0.1cm,draw,circle},
         blank/.style={draw=none},
         edge from parent/.style=
         {draw,edge from parent path={(\tikzparentnode) -- (\tikzchildnode)}},
         level distance=0.8cm}
\newtheorem{theorem}{Theorem}
\newtheorem{corollary}[theorem]{Corollary}
\newtheorem{proposition}[theorem]{Proposition}
\newtheorem{lemma}[theorem]{Lemma}
\newtheorem{definition}[theorem]{Definition}
\newtheorem{example}[theorem]{Example}
\newtheorem{problem}[theorem]{Problem}
\newtheorem*{LSp}{Label Schema}
\newcommand{\dd}{{\rm dd}}
\newcommand{\val}{{\rm val}}
\newcommand{\ipk}{{\rm ipk}}
\newcommand{\YWCT}{{\rm YWCT\,}}
\newcommand{\SYT}{{\rm SYT\,}}
\newcommand{\LS}{{\rm LS\,}}
\newcommand{\fap}{{\rm fap\,}}
\newcommand{\plat}{{\rm plat\,}}
\newcommand{\ap}{{\rm ap\,}}
\newcommand{\lap}{{\rm lap\,}}
\newcommand{\lpk}{{\rm lpk\,}}
\newcommand{\des}{{\rm des\,}}
\newcommand{\exc}{{\rm exc\,}}
\newcommand{\cyc}{{\rm cyc\,}}
\newcommand{\man}{\mathfrak{A}_n}
\newcommand{\msn}{\mathfrak{S}_n}
\newcommand{\men}{\mathcal{E}_n}
\newcommand{\lrf}[1]{\lfloor #1\rfloor}
\newcommand{\mbn}{{\mathcal B}_n}
\newcommand{\mqn}{\mathcal{Q}_n}
\newcommand{\asc}{{\rm asc\,}}
\newcommand{\Eulerian}[2]{\genfrac{<}{>}{0pt}{}{#1}{#2}}
\newcommand{\Stirling}[2]{\genfrac{\{}{\}}{0pt}{}{#1}{#2}}
\newcommand{\stirling}[2]{\genfrac{[}{]}{0pt}{}{#1}{#2}}
\newcommand{\arxiv}[1]{\href{http://arxiv.org/abs/#1}{\texttt{arXiv:#1}}}
\numberwithin{equation}{section}
\begin{document}

\title{Differential operators, grammars and Young tableaux}

\author{Shi-Mei Ma}
\address{School of Mathematics and Statistics,
        Northeastern University at Qinhuangdao,
         Hebei 066000, P.R. China}
\email{shimeimapapers@163.com}
\thanks{The first author was supported by the National Natural Science Foundation of China (Grant number 12071063).}

\author{Jean Yeh}
\address{Department of Mathematics, National Kaohsiung Normal University, Kaohsiung 82444, Taiwan}
\email{chunchenyeh@nknu.edu.tw}
\thanks{The second author was supported by the National Science Council of Taiwan (Grant number: MOST 110-2115-M-017-002-MY2).}

\author{Yeong-Nan Yeh}
\address{College of Mathematics and Physics, Wenzhou University, Wenzhou 325035, P.R. China}
\email{mayeh@math.sinica.edu.tw}

\subjclass[2020]{Primary 05A19; Secondary 05E10}



\keywords{Differential operators, Standard Young tableaux, Increasing trees, Normal ordered problems,
Eulerian polynomials, Narayana polynomials, Ramanujan polynomials, $e$-positivity}

\begin{abstract}
In algebraic combinatorics and formal calculation, context-free grammar is defined by a formal derivative based on a set of substitution rules.
In this paper, we investigate this issue from three related viewpoints.
Firstly, we introduce a differential operator method. As one of the applications, we deduce a new grammar for the Narayana polynomials.
Secondly, we investigate the normal ordered
grammars associated with Eulerian polynomials.
Thirdly, motivated by the theory of differential posets,
we introduce a box sorting algorithm which leads to a bijection between
the terms in the expansion of $(cD)^nc$ and a kind of ordered weak set partitions,
where $c$ is a smooth function in the indeterminate $x$ and $D$ is the derivative with respect to $x$.
Using a map from ordered weak set partitions to standard Young tableaux, we find an expansion of
$(cD)^nc$ in terms of standard Young tableaux. Combining this with the theory of context-free grammars, we provide a unified
interpretations for the Ramanujan polynomials, Andr\'e polynomials, left peak polynomials,
interior peak polynomials, Eulerian polynomials of types $A$ and $B$, $1/2$-Eulerian polynomials, second-order Eulerian polynomials, and Narayana polynomials of types $A$ and $B$
in terms of standard Young tableaux. Along the same lines, we present an expansion of the powers of $c^kD$ in terms of
standard Young tableaux,
where $k$ is a positive integer.
In particular, we provide four interpretations for the second-order Eulerian polynomials.
All of the above apply to the theory of formal differential operator rings.
\end{abstract}

\maketitle
\tableofcontents
\section{Introduction}
In recent years, one can witness the spread of the idea of a context-free grammar (see~\cite{Chen23,ChenYang21,Dumont96,Lin21,Ma1902,Ma23}), which is a symbolic method based on a set of
production rules and it is closely related to umbral calculus and computer algebra. Context-free grammar has emerged as a
basic tool in combinatorics and formal calculation, leading to several new constructing bijections.
We shall investigate this issue from three closely related viewpoints:
\begin{itemize}
  \item Introduce a differential operator method;
  \item Introduce the theory of normal ordered grammars;
  \item Develop a general method to express Eulerian-type polynomials in terms of standard Young tableaux.
\end{itemize}

The {\it Weyl algebra} $W$ is the unital algebra generated by two symbols $D$ and $U$ satisfying the commutation relation
$DU-UD=I$,
where $I$ is the identity which we identify with ``1". In other words, $W=\langle D,U |DU-UD=I \rangle$.
Any word $w$ in the letters $U,V$ can always be brought
into {\it normal ordered form} where all letters $D$ stand to the right of all the letters $U$.
A famous example of $W$ is given by the substitution:
$D\rightarrow \frac{\mathrm{d}}{\mathrm{d}x},~U\rightarrow x$. Except where otherwise indicated, we always let $D=\frac{\mathrm{d}}{\mathrm{d}x}$.
The expansion of $(xD)^n$ has been studied as early as 1823 by Scherk~\cite[Appendix~A]{Blasiak10}. He found that
\begin{equation}\label{Stirling-def}
(xD)^n=\sum_{k=0}^n\Stirling{n}{k}x^kD^k,
\end{equation}
where $\Stirling{n}{k}$ is the {\it Stirling number of the second kind}, i.e., the number of partitions of the set $[n]=\{1,2,\ldots,n\}$ into $k$ blocks (non-empty subsets).
According to~\cite[Proposition~A.2]{Blasiak10}, one has
\begin{equation}\label{stirling-def}
(\mathrm{e}^xD)^n=\mathrm{e}^{nx}\sum_{k=0}^n\stirling{n}{k}D^k,
\end{equation}
where $\stirling{n}{k}$ is the (signless) Stirling number of the first
kind, i.e.,
the number of permutations of $[n]$ with $k$ cycles.
Many generalizations and variations of~\eqref{Stirling-def} and~\eqref{stirling-def} occur naturally in quantum physics, combinatorics and algebra.
The reader is referred to Blasiak-Flajolet~\cite{Blasiak10} and Schork~\cite{Schork21} for surveys and~\cite{Agapito21,Briand20,Mansour10,Eu17,Han23,Mansour16} for recent progress
on this subject.

In~\cite{Stanley88}, Stanley introduced $r$-differential posets, as a class of ranked posets enjoying
a number of important combinatorial and algebraic properties. A basic tool in the theory of differential
posets is the use of two adjoint operators $D$ and $U$ on the vector space of linear combinations of elements of $P$.
Let $P$ be a locally finite graded poset with minimal element.
According to~\cite[Theorem~2.2]{Stanley88},
$P$ is differential if and only if $DU-UD=I$.
Many of the properties of Young's lattice can be deduced
from this relation. Many authors have since provided generalizations
of differential poset, see~\cite{Lopes23} for instance.

Let $Y$ denote Young's lattice of partitions, i.e. the set of all
integer partitions ordered by inclusion, and denote by $\mathbb{F}Y$ the vector space of formal linear
combinations of elements of $Y$. As operators on $\mathbb{F}Y$, it is well known that $U$ acts by sending
a partition $\lambda$ to the sum of all the partitions which can be obtained
from $\lambda$ by adding $1$ to a part of $\lambda$ (without changing the property that the parts are weakly
decreasing) or creating a new part $1$ at the end, and $D$ acts similarly but by subtracting $1$ (see~\cite{Benkart98,Lopes23,Stanley90} for details).
Motivated by this idea, we shall introduce a box sorting algorithm in Section~\ref{sec05}.

Following Chen~\cite{Chen93}, a {\it context-free grammar} $G$ over an alphabet
$V$ is defined as a set of substitution rules replacing a letter in $V$ by a formal function over $V$.
The formal function may be a polynomial or a Laurent polynomial.
The formal derivative $D_G$ with respect to $G$ satisfies the derivation rules:
$$D_G(u+v)=D_G(u)+D_G(v),~D_G(uv)=D_G(u)v+uD_G(v).$$
So the {\it Leibniz rule} holds:
$$D_G^n(uv)=\sum_{k=0}^n\binom{n}{k}D_G^k(u)D_G^{n-k}(v).$$

In this paper, we always let $D_G$ be the formal derivative associated with the grammar $G$. Here we
recall two results, which may be seen as the correspondences of~\eqref{Stirling-def} and~\eqref{stirling-def}.
\begin{proposition}[{\cite{Chen93}}]\label{grammar01}
If $G=\{a\rightarrow ab, b\rightarrow b\}$, then $D_{G}^n(a)=a\sum_{k=0}^n\Stirling{n}{k}b^k$.
\end{proposition}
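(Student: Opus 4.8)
The plan is to prove Proposition~\ref{grammar01} by induction on $n$, showing that the coefficients of $b^k$ in $D_G^n(a)$ satisfy the defining recurrence of the Stirling numbers of the second kind. First I would establish the base case: when $n=0$ we have $D_G^0(a) = a = a\Stirling{0}{0}b^0$, since $\Stirling{0}{0}=1$, which matches. For the inductive step, assume $D_G^n(a) = a\sum_{k=0}^n \Stirling{n}{k}b^k$ and apply $D_G$ once more, using the derivation rules $D_G(uv) = D_G(u)v + uD_G(v)$ together with the substitution rules $a \to ab$ and $b \to b$.

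The computation goes as follows. Applying $D_G$ to each term $a\,\Stirling{n}{k}b^k$ and using the product rule,
\begin{align*}
D_G^{n+1}(a) &= D_G\!\left(a\sum_{k=0}^n\Stirling{n}{k}b^k\right)
 = \sum_{k=0}^n\Stirling{n}{k}\left(D_G(a)b^k + a\,D_G(b^k)\right)\\
&= \sum_{k=0}^n\Stirling{n}{k}\left(ab\cdot b^k + a\cdot k b^{k-1}\cdot b\right)
 = a\sum_{k=0}^n\Stirling{n}{k}\left(b^{k+1} + k\,b^{k}\right),
\end{align*}
where I used $D_G(b^k) = k b^{k-1}D_G(b) = k b^{k-1}\cdot b = k b^k$. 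Reindexing the first sum by replacing $k$ with $k-1$, the coefficient of $a\,b^k$ in $D_G^{n+1}(a)$ becomes $\Stirling{n}{k-1} + k\Stirling{n}{k}$, which is precisely the standard recurrence $\Stirling{n+1}{k} = \Stirling{n}{k-1} + k\Stirling{n}{k}$. Hence $D_G^{n+1}(a) = a\sum_{k=0}^{n+1}\Stirling{n+1}{k}b^k$, completing the induction.

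There is no serious obstacle here; the only thing to be careful about is the bookkeeping at the boundary terms when reindexing (the $k=0$ term contributes nothing to $\Stirling{n}{k-1}$, and the top term $k=n+1$ arises only from the shifted first sum), but this is exactly consistent with the convention $\Stirling{n}{k}=0$ for $k<0$ or $k>n$. Alternatively, one could avoid induction entirely by invoking the operator identity~\eqref{Stirling-def}: since the grammar rules $a\to ab$, $b\to b$ mimic the action of $xD$ on monomials $x^k$ (with $a \leftrightarrow x^0 = 1$ scaled appropriately and $b \leftrightarrow$ the eigenvalue-tracking variable), the grammar $D_G$ is conjugate to $xD$ and the expansion follows from~\eqref{Stirling-def}. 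I would present the direct inductive proof as the cleaner and more self-contained option.
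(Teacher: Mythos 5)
Your inductive argument is correct and complete: the computation $D_G(a\,b^k)=ab^{k+1}+k\,ab^k$ is right, the reindexing gives the coefficient $\Stirling{n}{k-1}+k\Stirling{n}{k}$ of $a\,b^k$, and this is exactly the defining recurrence of the Stirling numbers of the second kind, with the boundary conventions handled properly. However, it takes a different route from the paper. The paper states Proposition~\ref{grammar01} as a quoted result of Chen and, in Section~\ref{sec0301}, effectively rederives it by the differential operator method: one sets $T=x\frac{\mathrm{d}}{\mathrm{d}x}$, $a=\mathrm{e}^x$, $b=x$, checks that $T(a)=ab$ and $T(b)=b$ so that $T^n(a)=D_G^n(a)$, and then reads off the expansion from Scherk's formula~\eqref{Stirling-def} applied to $\mathrm{e}^x$. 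That is precisely the ``alternative'' you sketch at the end, except that your description of the realization (``$a\leftrightarrow x^0=1$ scaled appropriately'') is vaguer than it needs to be --- the clean choice is $a=\mathrm{e}^x$, $b=x$, which makes the conjugation exact rather than heuristic. The trade-off is clear: your induction is self-contained and needs no analytic model, only the derivation rules and the Stirling recurrence, whereas the paper's realization argument is shorter once~\eqref{Stirling-def} is granted and illustrates the general differential operator method that the paper goes on to exploit for several other families of polynomials. Either proof is acceptable; if you keep the operator remark, replace the informal dictionary with the explicit substitution $a=\mathrm{e}^x$, $b=x$.
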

\begin{proposition}[{\cite{Ma131}}]\label{grammar02}
If $G=\{a\rightarrow ab, b\rightarrow bc, c\rightarrow c^2\}$, then
$D_{G}^n(a)=a\sum_{k=0}^n\stirling{n}{k}b^kc^{n-k}$.
\end{proposition}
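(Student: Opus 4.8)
The plan is to proceed by induction on $n$, mirroring the classical recurrence for the signless Stirling numbers of the first kind, namely $\stirling{n+1}{k}=n\stirling{n}{k}+\stirling{n}{k-1}$, together with the boundary values $\stirling{0}{0}=1$ and $\stirling{n}{k}=0$ for $k<0$ or $k>n$. For $n=0$ the claimed identity reads $D_G^0(a)=a=a\stirling{0}{0}b^0c^0$, which holds. Assume the formula holds for some $n\ge 0$, so that $D_G^n(a)=a\sum_{k=0}^n\stirling{n}{k}b^kc^{n-k}$. The main step is to apply $D_G$ once more, using the derivation rules and the Leibniz rule for products, and then to recognize the resulting coefficients as a shift of the Stirling recurrence.

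Concretely, I would first record the action of $D_G$ on the relevant monomials. From $G=\{a\rightarrow ab,\ b\rightarrow bc,\ c\rightarrow c^2\}$ one computes, by the product rule, $D_G(a b^k c^{n-k}) = D_G(a)b^kc^{n-k} + a\,D_G(b^k)c^{n-k} + a b^k\,D_G(c^{n-k})$. Here $D_G(a)=ab$, $D_G(b^k)=k b^{k-1}D_G(b)=k b^{k-1}(bc)=k b^k c$, and $D_G(c^{n-k})=(n-k)c^{n-k-1}D_G(c)=(n-k)c^{n-k-1}c^2=(n-k)c^{n-k+1}$. Hence
\begin{equation*}
D_G(a b^k c^{n-k}) = a b^{k+1} c^{n-k} + k\, a b^k c^{n-k+1} + (n-k)\, a b^k c^{n-k+1} = a b^{k+1} c^{n-k} + n\, a b^k c^{n-k+1}.
\end{equation*}
Observe that both terms on the right are of the form $a b^{j} c^{(n+1)-j}$, as they must be for the inductive step to close.

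Summing against the inductive hypothesis, $D_G^{n+1}(a) = \sum_{k=0}^n \stirling{n}{k}\bigl( a b^{k+1}c^{n-k} + n\, a b^k c^{n-k+1}\bigr)$. Re-indexing the first sum by $j=k+1$ and leaving the second with $j=k$, the coefficient of $a b^j c^{(n+1)-j}$ becomes $\stirling{n}{j-1} + n\stirling{n}{j}$, which is exactly $\stirling{n+1}{j}$ by the recurrence, and the range of $j$ extends correctly from $0$ to $n+1$ because of the vanishing boundary conditions. This yields $D_G^{n+1}(a) = a\sum_{j=0}^{n+1}\stirling{n+1}{j} b^j c^{(n+1)-j}$, completing the induction. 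I do not anticipate a genuine obstacle here; the only point requiring a little care is the bookkeeping of the index shift and the boundary terms $j=0$ and $j=n+1$, to confirm that no spurious or missing contributions arise when the two re-indexed sums are merged. An alternative, essentially equivalent route would be to verify the claim via the exponential generating function / Scherk-type identity \eqref{stirling-def}, transporting $(\mathrm{e}^x D)^n = \mathrm{e}^{nx}\sum_k \stirling{n}{k} D^k$ through the substitution $a\mapsto \mathrm{e}^x$, $b\mapsto 1$, $c\mapsto$ (a tracking variable), but the direct grammatical induction above is cleaner and self-contained.
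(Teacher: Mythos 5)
Your induction is correct: the computation $D_G(ab^kc^{n-k})=ab^{k+1}c^{n-k}+n\,ab^kc^{n-k+1}$ and the identification of the resulting coefficients with the recurrence $\stirling{n+1}{j}=n\stirling{n}{j}+\stirling{n}{j-1}$ both check out, and the boundary terms are handled properly. The paper itself gives no proof, citing \cite{Ma131}, but this grammatical induction is precisely the standard argument used there, so your proposal matches the intended route.
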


In~\cite{Chen93}, Chen applied grammar to obtain demonstrations of Fa\`{a} di Bruno's formula, and some identities
concerning Stirling numbers and symmetric functions.
In~\cite{Dumont96}, Dumont obtained grammatical interpretation for
Eulerian polynomials, Roselle polynomials, Andr\'e polynomials and cycle indicator polynomials.
A remarkable discovery of Dumont-Ramamonjisoa~\cite{Dumont9602} is the grammar for the Ramanujan polynomials, in connection
with Shor's refinement of Cayley's formula.
We now describe two widely used methods.
Following~\cite{Chen17,Dumont96}, the {\it grammatical labeling method} is an assignment of the underlying elements of
a combinatorial structure with variables, which is consistent with the substitution rules
of a grammar.
Another method is the {\it change of grammars}, which essentially is a change of variables, see~\cite{Chen2301,Chen23,Lin21,Ma1902,Ma23} for applications.

The following result suggests that it is natural to consider normal ordered form of grammars.
\begin{proposition}\label{prop-3}
If $G=\{x\rightarrow 1\}$, then one has $\left(xD_G\right)^n=\sum_{k=0}^n\Stirling{n}{k}x^kD_{G}^k$.
\end{proposition}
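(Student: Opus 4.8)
The plan is to prove the identity by induction on $n$, working at the level of operators on the ring of polynomials (or Laurent polynomials, or smooth functions) in $x$. The crucial preliminary observation is that, because $G=\{x\to 1\}$ forces $D_G(x)=1$, the formal derivative $D_G$ acts on this ring exactly as the ordinary derivative $\frac{\mathrm{d}}{\mathrm{d}x}$; in particular $D_G(x^k)=kx^{k-1}$. The base case is immediate: for $n=0$ the claim is the trivial identity $I=\Stirling{0}{0}x^0D_G^0$, and for $n=1$ it reads $xD_G=\Stirling{1}{1}\,xD_G$ since $\Stirling{1}{0}=0$.

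For the inductive step, assume $(xD_G)^n=\sum_{k=0}^n\Stirling{n}{k}x^kD_G^k$. The key computation is the operator identity $D_G\circ x^k=kx^{k-1}+x^kD_G$, which follows from the Leibniz rule together with $D_G(x^k)=kx^{k-1}$; composing on the left with multiplication by $x$ and on the right with $D_G^k$ gives $xD_G\circ x^kD_G^k=kx^kD_G^k+x^{k+1}D_G^{k+1}$. Applying $xD_G$ to the inductive hypothesis and collecting the coefficient of $x^kD_G^k$ then produces $k\Stirling{n}{k}+\Stirling{n}{k-1}$, which equals $\Stirling{n+1}{k}$ by the standard recurrence for the Stirling numbers of the second kind. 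This closes the induction.

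It is worth noting that Proposition~\ref{prop-3} can also be read off directly from Scherk's identity~\eqref{Stirling-def}, since $D_G=\frac{\mathrm{d}}{\mathrm{d}x}$ on polynomials; the genuinely new content is the repackaging in which $x$ and $D_G$ are viewed as the noncommuting generators $U$ and $D$ of the Weyl algebra, so that the statement becomes the prototype of a \emph{normal ordered grammar}. The only step requiring care, though it is hardly an obstacle, is the operator bookkeeping: one must track the order of composition and verify that $D_G$ passes to the right past $x$ at the cost of a commutator term, i.e.\ that $D_GU-UD_G=I$ with $U$ the multiplication-by-$x$ operator, which is precisely the defining relation recalled above. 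Everything else reduces to the Stirling recurrence.
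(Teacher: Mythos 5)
Your proof is correct. The paper states Proposition~\ref{prop-3} without proof, presenting it only as motivation for normal ordered grammars, so there is nothing to diverge from; your induction via the operator identity $D_G\circ x^k=kx^{k-1}+x^kD_G$ and the recurrence $\Stirling{n+1}{k}=k\Stirling{n}{k}+\Stirling{n}{k-1}$ is exactly the standard argument, and your closing observation that the claim is just Scherk's identity~\eqref{Stirling-def} read through the identification $D_G=\frac{\mathrm{d}}{\mathrm{d}x}$ is precisely the implicit justification the paper relies on.
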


Following Hwang-Chern-Duh~\cite{Hwang20}, we say that $\{\mathcal{P}_{n}(x)\}_{n\geqslant 0}$ is a sequence of {\it Eulerian-type polynomials} if
it satisfies the general Eulerian recurrence:
\begin{equation*}\label{Eulerian02}
\mathcal{P}_{n}(x)=(\alpha(x)n+\gamma(x))\mathcal{P}_{n-1}(x)+\beta(x)(1-x)\frac{\mathrm{d}}{\mathrm{d}x}\mathcal{P}_{n-1}(x)
\end{equation*}
for $n\geqslant 1$, where $\mathcal{P}_0(x),\alpha(x),\beta(x)$ and $\gamma(x)$ are given functions of $x$.
When $\alpha(x)=x,~\gamma(x)=0$ and $\beta(x)=x$, the polynomial $\mathcal{P}_{n}(x)$ reduces to the classical Eulerian polynomial $A_n(x)$.
The well studied Eulerian-type polynomials including Eulerian polynomials of type $B$, Narayana polynomials and second-order Eulerian polynomials.
As given by~\eqref{AnxSYT01}, Eulerian polynomial has a famous interpretation in terms of standard Young tableaux, which leads to a general problem.
\begin{problem}\label{problem}
How to establish the relationships between Eulerian-type polynomials and Young tableaux?
In other words, could we provide a general method to express Eulerian-type polynomials in terms of Young tableaux?
\end{problem}
The organization of this paper is as follows.
In Section~\ref{section02}, we collect the definitions and preliminaries results that will be used in the rest part of this paper.
In Section~\ref{sec03}, we introduce a differential operator method and we propose three new applications of this method.
In Section~\ref{section04},
we study normal ordered grammars associated with Eulerian polynomials.
In Section~\ref{sec05}, we introduce the box sorting algorithm, and then we present Theorem~\ref{TSYTncol},
which gives an answer to Problem~\ref{problem}.
In Section~\ref{sec06}, we collect several applications of Theorem~\ref{TSYTncol}.
In particular, in Theorems~\ref{thm-Cnxy},~\ref{cnx01},~\ref{cnx02},~\ref{cnx03}, we present
four interpretations of the second-order Eulerian polynomials.
Moreover, in Theorem~\ref{NBthm}, a deep connection between the type $B$ Narayana polynomials and the type $B$ Eulerian polynomials is established.
\section{Notation and Preliminaries}\label{section02}
\subsection{A brief survey of the powers of differential operators}\label{subsection21}
\hspace*{\parindent}

Given the exponential series
$$f(t)=x_0+x_1t+x_2\frac{t^2}{2!}+\cdots+x_n\frac{t^n}{n!}+\cdots.$$
In 1857, Cayley~\cite{Cayley57} introduced trees in order to solve the differential equation:
\begin{equation}\label{Vxt}
\frac{\mathrm{d}y}{\mathrm{d}t}(t)=f(y(t)),~y(0)=0.
\end{equation}
Under various guises, the differential equation~\eqref{Vxt} frequently appeared in combinatorics, probability, special functions, algebra and representation theory.
The reader is referred to Bergeron-Flajolet-Salvy~\cite{Bergeron92}, Hivert-Novelli-Thibon~\cite{Hivert06} and Lopes~\cite{Lopes23} for surveys on this subject.

From~\eqref{Vxt}, we see that
$$\frac{\mathrm{d}y}{\mathrm{d}t}(t)=x_0+x_1y(t)+x_2\frac{{y(t)}^2}{2!}+\cdots+x_n\frac{{y(t)}^n}{n!}+\cdots,~y(0)=0.$$
Dumont~\cite[Section~3.3]{Dumont96} observed that its solution is given by the series
\begin{equation}\label{yx0}
y=\sum_{n=1}^\infty D_{G}^{n-1}(x_0)\frac{t^n}{n!},~\text{where $G=\{x_i\rightarrow x_0x_{i+1}~{\text{for $i\geqslant 0$}}\}$.}
\end{equation}

Throughout this paper, we always let $c:=c(x)$ and $f:=f(x)$ be two smooth functions in the indeterminate $x$.
We adopt the convention that $c_k=D^kc$ and $\mathbf{f}_k=D^kf$ for $k\geqslant 0$. Set $\mathbf{f}_0=f$ and $c_0=c$, where $D=\frac{\mathrm{d}}{\mathrm{d}x}$.
The first few $(cD)^nf$ are given as follows:
\begin{align*}
(cD)f&=(c)  {\mathbf{f}}_1,\\
(cD)^2f&=(c c_1 )  {\mathbf{f}}_1 +(c^2 )  {\mathbf{f}}_2,\\
(cD)^3f&=(c c_1^2  +c^2 c_2 )  {\mathbf{f}}_1 +(3c^2 c_1 )  {\mathbf{f}}_2 +(c^3 )  {\mathbf{f}}_3,\\
(cD)^4f&=(c c_1^3  +4c^2 c_1 c_2  +c^3 c_3 )  {\mathbf{f}}_1 +(7c^2 c_1^2  +4c^3 c_2 )  {\mathbf{f}}_2 +(6c^3 c_1 )  {\mathbf{f}}_3 +(c^4 )  {\mathbf{f}}_4.
\end{align*}
\begingroup\vspace*{-\baselineskip}
\captionof{table}{Expansions of $(cD)^nf$}\label{tab:1}
\vspace*{\baselineskip}\endgroup
For $n\geqslant 1$, we define
\begin{equation}\label{Ank-def}
(cD)^nf =\sum_{k=1}^nF_{n,k}\mathbf{f}_k.
\end{equation}
From Table~\ref{tab:1}, we note that $F_{n,k}=F_{n,k}(c,c_1,\ldots,c_{n-k})$ is a function of $c,c_1,\ldots,c_{n-k}$.
In particular, $F_{1,1}=c$, $F_{2,1}=cc_1$ and $F_{2,2}=c^2$.
By induction, it is easy to verify that
$F_{n+1,1}=cDF_{n,1}$, $F_{n,n}=c^n$ and for $2\leqslant k\leqslant n$, we have
$F_{n+1,k}=cF_{n,k-1}+cDF_{n,k}$.
Let $G$ be given in~\eqref{yx0}. It is easy to verify that
$$F_{n,1}(x_0,x_1,\ldots,x_{n-1})=\left(cD_G\right)^{n-1}c\big{|}_{c=x_0,c_1=x_1,\ldots,c_{n-1}=x_{n-1}}=D_{G}^{n-1}(x_0)~{\text{for $n\geqslant 1$}}.$$

By induction, Comtet~\cite{Comtet73} found an explicit formula of $F_{n,k}$.
Very recently, Han-Ma~\cite{Han23} first gave a simple proof of Comtet's formula via inversion sequences,
and then introduced $k$-Young tableaux and their $g$-indices. Using the indispensable $k$-Young tableaux, they obtained
combinatorial interpretations of Eulerian polynomials and second-order Eulerian
polynomials in terms of standard Young tableaux.
In Section~\ref{sec05}, we shall give a substantial and original improvement of this idea.

A {\it partition} of $n$ is a weakly decreasing sequence of nonnegative integers:
$$\lambda=(\lambda_1,\lambda_2,\ldots,\lambda_{\ell}),$$
where $\sum_{i=1}^{\ell}\lambda_i=n$. Each $\lambda_i$ is called a {\it part} of $\lambda$.
If $\lambda$ is a partition of $n$, then we write $\lambda\vdash n$.
As usual, we denote by $m_i$ the number of parts equals $i$. By using the multiplicities, we also denote $\lambda$ by $(1^{m_1}2^{m_2}\cdots n^{m_n})$.
The partition with all parts equal to $0$ is the empty partition. The {\it length} of $\lambda$, denoted $\ell(\lambda)$, is the maximum subscript $j$ such that $\lambda_j>0$.
The {\it Ferrers diagram} of $\lambda$ is graphical representation of $\lambda$ with $\lambda_i$ boxes in its $i$th row and the boxes are left-justified.
For a Ferrers diagram $\lambda\vdash n$ (we will often identify a partition with its Ferrers
diagram), a {\it standard Young tableau} ($\SYT$, for short) of shape $\lambda$ is a filling of the $n$ boxes of $\lambda$ with the integers $\{1,2,\ldots, n\}$ such that each of those
integers is used exactly once, and all rows and columns are increasing (from left to right, and from bottom to top, respectively).
Given a Young tableau, we number its rows starting from the bottom and going above.

Let $\operatorname{SYT}_\lambda$ be the set of standard Young tableaux of shape $\lambda$.
Set $$\SYT(n)=\bigcup_{\lambda\vdash n}\operatorname{SYT}_\lambda.$$
The {\it descent set} for $T\in\SYT(n)$ is defined by
$$\operatorname{Des}(T)=\{i\in[n-1]: i+1~{\text{appears in an upper row of $T$ than $i$}} \}.$$
Denote by $\#V$ the cardinality of a set $V$.
Let $\des(T)=\#\operatorname{Des}(T)$.
The Robinson-Schensted correspondence is a bijection from permutations to pairs of standard Young tableaux of the
same shape. This correspondence and its generalization, the
Robinson-Schensted-Knuth correspondence, have become centrepieces of enumerative and algebraic combinatorics due to their many applications and properties, see~\cite{Sagan01} for details.
A remarkable application of the Robinson-Schensted correspondence is the following identity:
\begin{equation}\label{AnxSYT01}
A_n(x)=\sum_{\pi\in\msn}x^{\des(\pi)+1}=\sum_{\lambda\vdash n}\#\operatorname{SYT}_\lambda\sum_{T\in\operatorname{SYT}_\lambda}x^{\des(T)+1},
\end{equation}
which has been extended to skew shapes, see~\cite{Adin19,Huang20} for instances. In Section~\ref{sec06}, we shall express
ten kinds of well studied polynomials in terms of standard Young tableaux.

Recently, Briand-Lopes-Rosas~\cite{Briand20} studied the normally ordered form of the operator $cD^d$.
In~\cite[Corollary~3.11]{Briand20}, they presented a generalization of Comtet's explicit formula for $F_{n,k}$.
Moreover, they gave a survey on the combinatorial and arithmetic properties of the coefficients $F_{n,k}$, which can be summarized as follows.
\begin{proposition}[{\cite{Briand20}}]\label{prop02}
Let $F_{n,k}$ be defined by~\eqref{Ank-def}.
There exist positive integers $a(n,\lambda)$ such that
\begin{equation*}
F_{n,k}=\sum_{\lambda\vdash n-k} a(n,\lambda)c^{n-\ell(\lambda)}c_{\lambda},
\end{equation*}
where $\lambda$ runs over all partitions of $n-k$. The Stirling numbers of the first and second kinds, and the Eulerian numbers can be respectively expressed as follows:
\begin{equation}\label{Fnk}
\stirling{n}{k}=\sum_{\lambda\vdash n-k} a(n,\lambda),~
\Stirling{n}{k}=a(n,1^{n-k}),~
\Eulerian{n}{k}=\sum_{\ell(\lambda)=n-k} a(n,\lambda).
\end{equation}
\end{proposition}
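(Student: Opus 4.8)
The plan is to realise $F_{n,k}$ inside the polynomial ring $\mathbb{Q}[c,c_1,c_2,\ldots]$ in countably many indeterminates (with $c_0:=c$), equipped with the formal derivation $D$ defined by $Dc_i=c_{i+1}$; this is the derivation under which the coefficients $F_{n,k}$ from \eqref{Ank-def} satisfy the uniform recurrence $F_{n+1,k}=cF_{n,k-1}+cDF_{n,k}$ for $1\le k\le n+1$, once we adopt the conventions $F_{n,0}=0$ and $F_{n,k}=0$ for $k>n$ (this repackages the relations recorded after \eqref{Ank-def}). First I would introduce a bigrading on monomials by assigning each $c_i$ the \emph{degree} $1$ and the \emph{weight} $i$. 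Multiplication by $c$ raises degree by $1$ and leaves weight unchanged, while $D$ preserves degree and raises weight by $1$; so an induction on $n$ from the recurrence shows that $F_{n,k}$ is bihomogeneous of degree $n$ and weight $n-k$. Since a monomial $\prod_{i\ge0}c_i^{a_i}$ of degree $n$ and weight $n-k$ is the same thing as a monomial $c^{\,n-\ell(\lambda)}c_\lambda$ with $\lambda$ the partition of $n-k$ having $m_i(\lambda)=a_i$ for $i\ge1$ (so $\ell(\lambda)=\sum_{i\ge1}a_i$ and $c_\lambda=\prod_{i\ge1}c_i^{a_i}$), this already gives $F_{n,k}=\sum_{\lambda\vdash n-k}a(n,\lambda)\,c^{\,n-\ell(\lambda)}c_\lambda$ for integers $a(n,\lambda)$. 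These are nonnegative because the recurrence has nonnegative structure constants and $F_{1,1}=c$, and a slightly finer induction shows each $a(n,\lambda)$ is in fact positive: for $k\ge2$ the summand $cF_{n,k-1}$ already contributes $a(n,\mu)\,c^{\,n+1-\ell(\mu)}c_\mu$ for every $\mu\vdash(n+1)-k$, and for $k=1$ one verifies that $cDF_{n,1}$ reaches every $\mu\vdash n$, since either deleting a part equal to $1$ or decreasing one part of $\mu$ by $1$ yields a $\nu\vdash n-1$ for which the $\mu$-monomial occurs in $cD\!\left(c^{\,n-\ell(\nu)}c_\nu\right)$ with positive coefficient.

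For the two Stirling statements I would substitute concrete functions $c$ into the polynomial $F_{n,k}$ and match against the classical operator expansions; since $(cD)^nf=\sum_{k}F_{n,k}D^kf$ for every smooth $f$ and the $D^kf$ are linearly independent, the coefficient polynomials $F_{n,k}$ are pinned down by any such identity. Taking $c=\mathrm{e}^x$, so that $c_i=\mathrm{e}^x$ for all $i$ and each monomial $c^{\,n-\ell(\lambda)}c_\lambda$ collapses to $\mathrm{e}^{nx}$, gives $F_{n,k}\big|_{c=\mathrm{e}^x}=\mathrm{e}^{nx}\sum_{\lambda\vdash n-k}a(n,\lambda)$; comparison with \eqref{stirling-def}, which forces $F_{n,k}\big|_{c=\mathrm{e}^x}=\mathrm{e}^{nx}\stirling{n}{k}$, yields $\stirling{n}{k}=\sum_{\lambda\vdash n-k}a(n,\lambda)$. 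Taking $c=x$, so that $c_1=1$ and $c_i=0$ for $i\ge2$, annihilates every monomial containing some $c_i$ with $i\ge2$ and leaves only $\lambda=1^{n-k}$, giving $F_{n,k}\big|_{c=x}=a(n,1^{n-k})\,x^{k}$; comparison with \eqref{Stirling-def} (equivalently with Proposition~\ref{prop-3}), which forces $F_{n,k}\big|_{c=x}=\Stirling{n}{k}\,x^{k}$, yields $\Stirling{n}{k}=a(n,1^{n-k})$.

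The Eulerian identity is the delicate one; here the sum $\sum_{\ell(\lambda)=n-k}a(n,\lambda)$ is understood to run over \emph{all} partitions $\lambda$ (of size between $0$ and $n-1$) with $\ell(\lambda)=n-k$, not merely those of $n-k$. I would work with $H_n:=\sum_{k}F_{n,k}=\sum_{\lambda}a(n,\lambda)\,c^{\,n-\ell(\lambda)}c_\lambda$, which by summing the recurrence satisfies $H_{n+1}=c\,(1+D)H_n$ with $H_0=1$, and then apply the substitution $\sigma\colon c\mapsto t,\ c_i\mapsto1$ for $i\ge1$. This produces $G_n(t):=\sigma(H_n)=\sum_{\lambda}a(n,\lambda)\,t^{\,n-\ell(\lambda)}$, whose coefficient of $t^k$ is exactly $\sum_{\ell(\lambda)=n-k}a(n,\lambda)$, so it remains to identify $G_n(t)$ with the Eulerian polynomial $A_n(t)=\sum_{\pi\in\msn}t^{\des(\pi)+1}$. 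The main obstacle is that $\sigma$ is not induced by any smooth choice of $c$, so there is no operator identity available; one must instead push $\sigma$ through $D$. Since $\sigma(c_{i+1})=1$ for all $i\ge0$, we get $\sigma(DH_n)=\sum_{i\ge0}\sigma(\partial_{c_i}H_n)$, while bihomogeneity in degree gives $\sum_{i\ge0}c_i\partial_{c_i}H_n=nH_n$ and hence $\sum_{i\ge1}\sigma(\partial_{c_i}H_n)=nG_n(t)-t\,\sigma(\partial_{c}H_n)$; combined with $\sigma(\partial_{c}H_n)=G_n'(t)$ this gives $\sigma(DH_n)=nG_n(t)+(1-t)G_n'(t)$. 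Feeding this into $H_{n+1}=cH_n+cDH_n$ produces $G_{n+1}(t)=(n+1)\,t\,G_n(t)+t(1-t)G_n'(t)$ with $G_1(t)=t$, which is exactly the recurrence and initial value characterising $A_n(t)$. Therefore $G_n=A_n$, and reading off the coefficient of $t^k$ gives $\Eulerian{n}{k}=\sum_{\ell(\lambda)=n-k}a(n,\lambda)$, as required.
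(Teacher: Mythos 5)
The paper gives no proof of this proposition: it is quoted verbatim from Briand--Lopes--Rosas, who derive it from an explicit Comtet-type formula for $F_{n,k}$. Your argument is therefore necessarily a different route, and after checking it I find it correct and self-contained. The three pillars all hold up: (i) the bigrading ($\deg c_i=1$, $\operatorname{wt}c_i=i$) together with the uniform recurrence $F_{n+1,k}=cF_{n,k-1}+cDF_{n,k}$ (valid for $1\leqslant k\leqslant n+1$ with the stated conventions) does force $F_{n,k}$ to be a nonnegative integer combination of the monomials $c^{n-\ell(\lambda)}c_\lambda$ with $\lambda\vdash n-k$, and your two-case check for strict positivity (the $cF_{n,k-1}$ term covers $k\geqslant 2$; for $k=1$ every $\mu\vdash n$ is reached from some $\nu\vdash n-1$ by either deleting a part $1$ or incrementing a part) closes the only place where positivity could fail; (ii) the specializations $c=\mathrm{e}^x$ and $c=x$ collapse the monomials exactly as you say, and matching against~\eqref{stirling-def} and~\eqref{Stirling-def} is legitimate because the functionals $f\mapsto D^kf(x_0)$ are linearly independent; (iii) the Eulerian identity is indeed the delicate point, since $\sigma\colon c\mapsto t$, $c_i\mapsto 1$ is not induced by any choice of $c$, and your device of combining $D=\sum_{i\geqslant 0}c_{i+1}\partial_{c_i}$ with the Euler identity $\sum_{i\geqslant 0}c_i\partial_{c_i}H_n=nH_n$ to get $\sigma(DH_n)=nG_n+(1-t)G_n'$ is exactly what is needed; the resulting recurrence $G_{n+1}=(n+1)tG_n+t(1-t)G_n'$ with $G_1=t$ matches~\eqref{Eulerian01}, and I verified the identification against Table~\ref{tab:1} for $n\leqslant 4$. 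What your approach buys over the cited one is that it needs no closed formula for $a(n,\lambda)$ at all, only the recurrence and two gradings; what it gives up is any explicit expression for the coefficients, which the reference's proof produces as a byproduct. A cosmetic remark only: it would be worth stating explicitly at the outset that $a(n,\lambda)$ is well defined for every partition with $|\lambda|\leqslant n-1$ (via $k=n-|\lambda|$), since the Eulerian sum ranges over all such $\lambda$ with $\ell(\lambda)=n-k$ and not merely over $\lambda\vdash n-k$ — you do say this, but only when you reach part (iii).
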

It should be noted that a dual of~\eqref{Fnk} is given by Corollary~\ref{Dual-result}.
\subsection{Eulerian-type polynomials and differential expressions}\label{subsection22}
\hspace*{\parindent}

Many polynomials can be generated by successive differentiations of a given base function, see~\cite[Section~4.3]{Hwang20} for a survey.
In this subsection, we collect five differential expressions, which will be used in Section~\ref{sec03}.

The {\it (type $A$) Eulerian polynomials} $A_n(x)$ can be defined by the differential expression:
\begin{equation}\label{Anx-poly-def}
\left(x\frac{\mathrm{d}}{\mathrm{d}x}\right)^n\frac{1}{1-x}=\sum_{k=0}^\infty k^nx^k=\frac{A_n(x)}{(1-x)^{n+1}}.
\end{equation}
They satisfy the recurrence relation
\begin{equation}\label{Eulerian01}
A_{n}(x)=nxA_{n-1}(x)+x(1-x)\frac{\mathrm{d}}{\mathrm{d}x}A_{n-1}(x),~A_0(x)=1.
\end{equation}
Let $\msn$ be the {\it symmetric group} of all permutations of $[n]$. For $\pi=\pi(1)\pi(2)\cdots\pi(n)\in\msn$,
the index $i\in [n-1]$ is a {\it descent} if $\pi(i)>\pi(i+1)$.
The {\it Eulerian polynomials} can also be defined by $$A_n(x)=\sum_{\pi\in\msn}x^{\des(\pi)+1}=\sum_{k=1}^n\Eulerian{n}{k}x^k,$$
where $\Eulerian{n}{k}$ are known as the {\it Eulerian numbers} (see~\cite{Branden08,Bre94,Gessel20,Petersen15}).
It follows from~\eqref{Eulerian01} that
\begin{equation}\label{Eulerian02}
\Eulerian{n}{k}=k\Eulerian{n-1}{k}+(n-k+1)\Eulerian{n-1}{k-1}.
\end{equation}

Savage-Viswanathan~\cite{Savage12} introduced the {\it $1/k$-Eulerian polynomials} $A_n^{(k)}(x)$, which can be defined by any of the following relations:
\begin{equation*}\label{Ankx-def01}
\sum_{n=0}^\infty A_n^{(k)}(x)\frac{z^n}{n!}=\left(\frac{1-x}{\mathrm{e}^{kz(x-1)}-x} \right)^{\frac{1}{k}},
\end{equation*}
\begin{equation}\label{Ankx-def04}
\sum_{t\geqslant 0}\binom{t-1+\frac{1}{k}}{t}(kt)^nx^t=\frac{ x^nA_n^{(k)}(1/x)}{(1-x)^{n+\frac{1}{k}}}.
\end{equation}
In particular, $A_n(x)=x^nA_n^{(1)}(1/x)$.
By~\eqref{Ankx-def04}, we find that
\begin{equation}\label{Ankx-def05}
\left(kx\frac{\mathrm{d}}{\mathrm{d}x}\right)^n\frac{1}{(1-x)^{1/k}}=\frac{x^nA_n^{(k)}(1/x)}{(1-x)^{n+\frac{1}{k}}}.
\end{equation}
It is now well known (see~\cite{Savage15}) that
\begin{equation}\label{Ankx-def02}
A_{n}^{(k)}(x)=\sum_{\pi\in\msn}x^{\exc(\pi)}k^{n-\cyc(\pi)},
\end{equation}
where $\exc(\pi)=\#\{i\in[n-1]: \pi(i)>i\}$ and $\cyc(\pi)$ is the number of cycles of $\pi$.
The $1/k$-Eulerian polynomial have been extensively studied in recent years, see~\cite{Hwang20,Ma15,Ma23}.

Let $\pm[n]=[n]\cup\{\overline{1},\overline{2},\ldots,\overline{n}\}$, where $\overline{i}=-i$.
Let $\mbn$ be the hyperoctahedral group of rank $n$. Elements of $\mbn$ are signed permutations $\sigma$ of $\pm[n]$ such that $\sigma(-i)=-\sigma(i)$ for all $i$.
One can ignore the negative index of $\sigma$ and just write $\sigma=\sigma(1)\sigma(2)\cdots\sigma(n)$.
The {\it type $B$ Eulerian polynomials} are defined by
\begin{equation*}
B_n(x)=\sum_{\sigma\in\mbn}x^{\operatorname{des}_B(\sigma)},
\end{equation*}
where $\operatorname{des}_B(\sigma):=\#\{i\in [0,n-1]:~\sigma(i)>\sigma({i+1}),~\sigma(0)=0\}$.
Following~\cite[p.~29]{Hwang20}, one has
\begin{equation}\label{dxdy}
\left(\frac{\mathrm{d}}{\mathrm{d}y}\right)^n\frac{\mathrm{e}^y}{1-\mathrm{e}^{2y}}=\frac{\mathrm{e}^xB_n(\mathrm{e}^{2x})}{(1-\mathrm{e}^{2x})^{n+1}}.
\end{equation}
Note that $\frac{\mathrm{d}}{\mathrm{d}y}=\frac{\mathrm{dx}}{\mathrm{d}y}\frac{\mathrm{d}}{\mathrm{d}x}$ and $x=\mathrm{e}^y$ is the solution of $x=\frac{\mathrm{d}x}{\mathrm{d}y}$.
It follows from~\eqref{dxdy} that
\begin{equation}\label{Bnxsum}
\left(x\frac{\mathrm{d}}{\mathrm{d}x}\right)^n\frac{x}{1-x^2}=\frac{xB_n(x^2)}{(1-x^2)^{n+1}}.
\end{equation}

The development of the theories of the second-order Eulerian polynomials began with the work of
Buckholtz~\cite{Buckholtz} in his studies of an asymptotic expansion.
For each positive integer $n$ and each complex number $x$,
one can define $S_n(x)$ by the equation
$\mathrm{e}^{nx}=\sum_{r=0}^n\frac{(nx)^r}{r!}+\frac{(nx)^n}{n!}S_n(x)$.
Buckholtz~\cite{Buckholtz} found that
$S_n(x)=\sum_{r=0}^{k-1}\frac{1}{n^r}U_r(x)+O(n^{-k})$,
and
\begin{equation}\label{eqr}
U_r(x)=(-1)^r\left(\frac{x}{1-x}\frac{\mathrm{d}}{\mathrm{d}x}\right)^r\frac{x}{1-x}=(-1)^r\frac{C_r(x)}{(1-x)^{2r+1}},
\end{equation}
where $C_r(x)$ is now called {\it second-order Eulerian polynomial}.

Following Gessel-Stanley~\cite{Gessel78},
a {\it Stirling permutation} of order $n$ is a permutation of the multiset $\{1^2,2^2,\ldots,n^2\}$ such that for each $i$, $1\leqslant i\leqslant n$,
all entries between any two occurrences of $i$ are at least $i$. The reader is referred to~\cite{Chen22,Ma1902,Ma23} for recent
progress on this subject.
Let $\mqn$ be the set of Stirling permutations of order $n$, and let $\sigma=\sigma_1\sigma_2\cdots\sigma_{2n}\in\mqn$.
The numbers of {\it descents}, {\it ascent-plateaux}, {\it left ascent-plateaux} and {\it flag ascent-plateaux} of $\sigma$ are defined by
\begin{align*}
\des(\sigma)&=\#\{i\in[2n]:~\sigma_i>\sigma_{i+1},~\sigma_{2n+1}=0\},\\
\ap(\sigma)&=\#\{i\in[2,2n-1]:~\sigma_{i-1}<\sigma_i=\sigma_{i+1}\},\\
\lap(\sigma)&=\#\{i\in[2n-1]:~\sigma_{i-1}<\sigma_i=\sigma_{i+1},~\sigma_0=0\},\\
\fap(\sigma)&=\ap(\sigma)+\lap(\sigma).
\end{align*}
Define
\begin{align*}
C_n(x)=\sum_{\sigma\in\mqn}x^{\des(\pi)},~
T_n(x)=\sum_{\sigma\in\mqn}x^{\fap(\sigma)}.
\end{align*}
Below are these polynomials for $n\leqslant 3$:
\begin{align*}
C_1(x)&=x,~
C_2(x)=x+2x^2,~
C_3(x)=x+8x^2+6x^3;\\
T_1(x)&=x,~
T_2(x)=x+x^2+x^3,~
T_3(x)=x+3x^2+7x^3+3x^4+x^5.
\end{align*}

According to~\cite[p.~14]{Ma20}, the {\it flag ascent-plateau polynomials} $T_n(x)$ can be defined as follows:
\begin{equation}\label{Rnx-altrun}
\left(x\frac{\mathrm{d}}{\mathrm{d}x}\right)^{n}r(x)=\frac{r(x)T_{n}(x)}{(1-x^2)^{n}},~\text{where}~r(x)=\sqrt{\frac{1+x}{1-x}}.
\end{equation}

In conclusion, we collect the differential expressions for five kinds of polynomials.
These polynomials share similar properties,
including real-rootedness~\cite{Ma22}, combinatorial expansions~\cite{Lin21,Ma1902,Ma23} and asymptotic distributions~\cite{Hwang20}.
Except the type $A$ Eulerian polynomials,
here we provide the recursions for the other four kinds of polynomials:
\begin{equation*}
\begin{split}
A_{n+1}^{(k)}(x)&=(1+nkx)A_{n}^{(k)}(x)+kx(1-x)\frac{\mathrm{d}}{\mathrm{d}x}A_{n}^{(k)}(x),~A_{0}^{(k)}(x)=1;\\
B_{n}(x)&=(1+(2n-1)x)B_{n-1}(x)+2x(1-x)\frac{\mathrm{d}}{\mathrm{d}x}B_{n-1}(x),~B_0(x)=1;\\
C_{n}(x)&=(2n-1)xC_n(x)+x(1-x)\frac{\mathrm{d}}{\mathrm{d}x}C_n(x),~C_0(x)=1;\\
T_{n+1}(x)&=(x+2nx^2)T_n(x)+x(1-x^2)\frac{\mathrm{d}}{\mathrm{d}x}T_n(x),~T_0(x)=1.
\end{split}
\end{equation*}
\section{A differential operator method}\label{sec03}
In order to illustrate the basic idea of a differential operator method, we first give seven examples.
We then deduce new grammars for three kinds of polynomials.
\subsection{The basic idea of differential operator method}\label{sec0301}
\hspace*{\parindent}

A connection between the formula~\eqref{Stirling-def} and Proposition~\ref{grammar01} is given as follows.
\begin{example}
It is clear from~\eqref{Stirling-def} that $\left(x\frac{\mathrm{d}}{\mathrm{d}x}\right)^n\mathrm{e}^x=\mathrm{e}^x\sum_{k=0}^n\Stirling{n}{k}x^k$.
Setting
\begin{equation*}\label{Tab}
T=x\frac{\mathrm{d}}{\mathrm{d}x},~a=\mathrm{e}^x~\text{and}~b=x,
\end{equation*}
we obtain $T(a)=ab$ and $T(b)=b$. Thus $T^n(a)=D_{G}^n(a)$, where $G=\{a\rightarrow ab, b\rightarrow b\}$.
\end{example}

In~\cite{Dumont96}, Dumont obtained the context-free grammar for Eulerian polynomials by using a grammatical labeling of circular permutations.
\begin{proposition}[{\cite[Section~2.1]{Dumont96}}]\label{grammar03}
Let $G=\{a\rightarrow ab, b\rightarrow ab\}$.
Then for $n\geqslant 1$, one has
\begin{equation*}
D_{G}^n(a)=D_{G}^n(b)=b^{n+1}A_n\left(\frac{a}{b}\right).
\end{equation*}
\end{proposition}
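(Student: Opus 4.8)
The plan is to prove the two identities by induction on $n$, using the derivation rules for $D_G$ and the known recurrence~\eqref{Eulerian01} for the Eulerian polynomials $A_n(x)$. First I would dispose of the equality $D_G^n(a)=D_G^n(b)$: since the grammar $G=\{a\rightarrow ab,\ b\rightarrow ab\}$ assigns the \emph{same} formal function $ab$ to both $a$ and $b$, we have $D_G(a)=D_G(b)$, and then applying $D_G^{n-1}$ to both sides gives $D_G^n(a)=D_G^n(b)$ for all $n\geqslant 1$. So it suffices to track a single quantity, say $P_n:=D_G^n(b)$, and show $P_n=b^{n+1}A_n(a/b)$.

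For the main induction, the base case $n=1$ reads $D_G(b)=ab=b^2\cdot(a/b)=b^2A_1(a/b)$, using $A_1(x)=x$, which checks out. For the inductive step I would assume $P_{n-1}=b^nA_{n-1}(a/b)$ and compute $P_n=D_G(P_{n-1})$ via the Leibniz rule. Writing $u=a/b$, I need the action of $D_G$ on the building blocks: $D_G(b)=ab$, so $D_G(b^n)=n\,b^{n-1}\cdot ab=n\,a\,b^n$; and $D_G(u)=D_G(a/b)=(D_G(a)\,b-a\,D_G(b))/b^2=(ab\cdot b-a\cdot ab)/b^2=0$. This last point is the crucial simplification: $a/b$ is a ``constant'' for $D_G$, so $D_G$ applied to any polynomial in $u$ with scalar coefficients vanishes, and more generally $D_G(b^mA_{n-1}(u))=m\,a\,b^m A_{n-1}(u)$ — wait, I must be careful, since differentiating $A_{n-1}(u)$ with respect to the grammar is not the same as the ordinary derivative $A_{n-1}'(u)$ unless I reintroduce it. The cleaner route: treat $P_{n-1}$ as a polynomial in $a$ and $b$ and differentiate directly. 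Explicitly, from $A_n(x)=nxA_{n-1}(x)+x(1-x)A_{n-1}'(x)$ one gets, after multiplying by $b^{n+1}$ and substituting $x=a/b$, that $b^{n+1}A_n(a/b)=na\,b^n A_{n-1}(a/b)+ab^{n}(1-a/b)\cdot b\,[A_{n-1}'(a/b)/b] = na\cdot b^nA_{n-1}(a/b)+a(b-a)b^{n-1}A_{n-1}'(a/b)$; I would verify that $D_G(b^nA_{n-1}(a/b))$ equals exactly this, by writing $b^nA_{n-1}(a/b)=\sum_j c_j a^j b^{n-j}$ (with $A_{n-1}(x)=\sum_j c_j x^j$) and applying $D_G$ termwise using $D_G(a^jb^{n-j})=j a^{j-1}(ab)b^{n-j}+(n-j)a^j b^{n-j-1}(ab)=n\,a^j b^{n-j}\cdot$ — hmm, that gives $n\,a^jb^{n-j}\cdot(ab)/(ab)$, i.e. $D_G(a^jb^{n-j})=a\cdot n\,a^jb^{n-j}/?$; I will recompute: $D_G(a^jb^{n-j})=j a^{j-1}\!\cdot\! ab\cdot b^{n-j}+a^j\cdot(n-j)b^{n-j-1}\!\cdot\! ab=j\,a^j b^{n-j+1}+(n-j)\,a^{j+1}b^{n-j}$. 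Summing over $j$ with weights $c_j$ recovers the two-term expression above after reindexing, matching $b^{n+1}A_n(a/b)$ via~\eqref{Eulerian01}.

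The step I expect to be the main (though still routine) obstacle is precisely this bookkeeping: keeping the homogeneity degrees straight and confirming that the termwise grammar-derivative of $\sum_j c_j a^j b^{n-j}$ reassembles into $b^{n+1}A_n(a/b)$ via the Eulerian recurrence — that is, matching the ``$na$'' shift and the ``$(1-x)A_{n-1}'$'' term to the two summands $j\,a^jb^{n-j+1}$ and $(n-j)\,a^{j+1}b^{n-j}$. An alternative, perhaps more elegant, is to observe that both $D_G^n(a)$ and $b^{n+1}A_n(a/b)$ are homogeneous of degree $n+1$ in $(a,b)$, set $b=1$ to reduce to a single-variable statement $D_G^n(a)\big|_{b=1}=A_n(a)$, and note that on degree-homogeneous elements the grammar operator $D_G$ restricted to $b=1$ acts as $a\frac{\partial}{\partial a}+\frac{\partial}{\partial b}$ evaluated at $b=1$, which on a homogeneous polynomial of degree $d$ equals $a\frac{d}{da}+(d-a\frac{d}{da})=\,$ the operator $x\mapsto (x(1-x)\frac{d}{dx}+n x)$-type rule — this recovers~\eqref{Eulerian01} directly. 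Either way, the proof is a short induction; I would present the homogeneity-plus-recurrence version as the cleanest writeup.
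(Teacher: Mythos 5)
Your induction is correct, but be aware that the paper contains no proof of Proposition~\ref{grammar03} at all: it is quoted from Dumont~\cite{Dumont96}, whose argument is combinatorial (a grammatical labeling of circular permutations, under which the monomials of $D_G^n(a)$ are generated by the permutations themselves), and the paper's own contribution is the adjacent Example~\ref{ex-Eul}, which realizes the grammar analytically via $a=\frac{1}{1-x}$, $b=\frac{x}{1-x}$, $T=x\frac{\mathrm{d}}{\mathrm{d}x}$ and then uses the proposition together with~\eqref{Anx-poly-def} to derive the Frobenius formula~\eqref{Frobenius} (that realization, plus homogeneity of $D_G^n(a)$ in $(a,b)$, could itself be upgraded to a proof, but is not presented as one). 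Your route --- expand $b^nA_{n-1}(a/b)=\sum_j c_ja^jb^{n-j}$, compute $D_G(a^jb^{n-j})=ja^jb^{n-j+1}+(n-j)a^{j+1}b^{n-j}$, and match the sum against the recurrence~\eqref{Eulerian01} --- is shorter and purely formal; what it costs is that it presupposes~\eqref{Eulerian01} and yields no combinatorial information, whereas Dumont's labeling explains \emph{why} the grammar encodes descents. One slip you must delete before writing this up: the side remark that $D_G(a/b)=0$ is false, since $D_G(a)\,b-a\,D_G(b)=ab^2-a^2b\neq 0$, so $a/b$ is \emph{not} a constant of this derivation (if it were, you would get $D_G(b^nA_{n-1}(a/b))=nab^nA_{n-1}(a/b)$, which already fails at $n=2$). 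You do abandon that route, but for the wrong reason: the chain rule for derivations applied to a scalar-coefficient polynomial is perfectly valid; it is the evaluation of $D_G(a/b)$ that is wrong. The termwise computation you fall back on is sound, as is the homogeneity alternative sketched at the end (setting $b=1$ and using Euler's identity to rewrite $ab(\partial_a+\partial_b)$ on degree-$n$ homogeneous polynomials as $nx+x(1-x)\frac{\mathrm{d}}{\mathrm{d}x}$, which is precisely~\eqref{Eulerian01}).
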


A direct connection
between~\eqref{Anx-poly-def} and Proposition~\ref{grammar03} is given by the following example.
\begin{example}\label{ex-Eul}
Setting
\begin{equation*}\label{Tab}
T=x\frac{\mathrm{d}}{\mathrm{d}x},~a=\frac{1}{1-x}~\text{and}~b=\frac{x}{1-x},
\end{equation*}
we obtain $T(a)=T(b)=ab$. So we have $T^n(a)=D_{G}^n(a)$, where $G=\{a\rightarrow ab, b\rightarrow ab\}$. Using~\eqref{Stirling-def}, it follows that
$$T^n(a)=\sum_{k=0}^n\Stirling{n}{k}x^k\left(\frac{\mathrm{d}}{\mathrm{d}x}\right)^k\frac{1}{1-x}=\sum_{k=0}^n\Stirling{n}{k}x^kk!\frac{1}{(1-x)^{k+1}}.$$
Combining the above expansion with~\eqref{Anx-poly-def}, we obtain
$$\frac{A_n(x)}{(1-x)^{n+1}}=\sum_{k=0}^n\Stirling{n}{k}x^kk!\frac{1}{(1-x)^{k+1}}.$$
Multiplying $(1-x)^{n+1}$ on both sides leads to the classical {\it Frobenious formula}:
\begin{equation}\label{Frobenius}
A_n(x)=\sum_{k=0}^nk!\Stirling{n}{k}x^k(1-x)^{n-k}.
\end{equation}
\end{example}

By~\eqref{Ankx-def05}, we now give a natural generalization of Example~\ref{ex-Eul}.
\begin{example}\label{ex2}
Setting $$T=kx\frac{\mathrm{d}}{\mathrm{d}x},~a=\frac{1}{(1-x)^{1/k}}~\text{and}~b=\left(\frac{x}{1-x}\right)^{1/k},$$
we get $T(a)=ab^k,~T(b)=a^kb$. Let $G=\{a\rightarrow ab^k,~b\rightarrow a^kb\}$. We get $D_{G}^n(a)=T^n(a)$.
Note that $ab^{kn}={x^n}/{(1-x)^{n+\frac{1}{k}}}$ and ${a^k}/{b^k}={1}/{x}$.
It follows from~\eqref{Ankx-def05} that
\begin{equation}\label{Tnak-Euler}
D_{G}^n(a)=T^n(a)=\frac{x^nA_n^{(k)}(1/x)}{(1-x)^{n+\frac{1}{k}}}=ab^{kn}A_n^{(k)}\left(\frac{a^k}{b^k}\right).
\end{equation}
\end{example}

\begin{proposition}\label{Ankx}
For $n\geqslant 1$, we have
\begin{equation}\label{Ankx-Explicit}
A_n^{(k)}(x)=\sum_{i=1}^n\Stirling{n}{i}k^{n-i}\prod_{j=0}^{i-1}(1+jk)(x-1)^{n-i}.
\end{equation}
\end{proposition}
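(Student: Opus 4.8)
The plan is to prove~\eqref{Ankx-Explicit} by combining the differential-operator identity from Example~\ref{ex2} with the known expansion~\eqref{Stirling-def} of $(xD)^n$. Starting from~\eqref{Ankx-def05}, write $T = kx\frac{\mathrm{d}}{\mathrm{d}x} = x\frac{\mathrm{d}}{\mathrm{d}(x^{1/k})}$-type rescaling is awkward, so instead I would note that $\left(kx\frac{\mathrm{d}}{\mathrm{d}x}\right)^n = k^n\left(x\frac{\mathrm{d}}{\mathrm{d}x}\right)^n$, and then apply~\eqref{Stirling-def} directly to the base function $(1-x)^{-1/k}$:
\begin{equation*}
\left(kx\frac{\mathrm{d}}{\mathrm{d}x}\right)^n\frac{1}{(1-x)^{1/k}} = k^n\sum_{i=0}^n\Stirling{n}{i}x^i\left(\frac{\mathrm{d}}{\mathrm{d}x}\right)^i\frac{1}{(1-x)^{1/k}}.
\end{equation*}

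Next I would compute the repeated derivative: $\left(\frac{\mathrm{d}}{\mathrm{d}x}\right)^i(1-x)^{-1/k} = \left(\prod_{j=0}^{i-1}\frac{1+jk}{k}\right)(1-x)^{-1/k-i}$, where the product arises from differentiating $(1-x)^{-1/k-m}$ step by step, each step producing a factor $\frac{1}{k}+m = \frac{1+mk}{k}$. Substituting this back and pulling out $(1-x)^{-1/k}$ gives
\begin{equation*}
\left(kx\frac{\mathrm{d}}{\mathrm{d}x}\right)^n\frac{1}{(1-x)^{1/k}} = \frac{1}{(1-x)^{1/k}}\sum_{i=0}^n\Stirling{n}{i}k^{n-i}\left(\prod_{j=0}^{i-1}(1+jk)\right)\frac{x^i}{(1-x)^i}.
\end{equation*}
Comparing with~\eqref{Ankx-def05}, which equates the left side to $\frac{x^nA_n^{(k)}(1/x)}{(1-x)^{n+1/k}}$, I would multiply both sides by $(1-x)^{n+1/k}$ to clear denominators, obtaining
\begin{equation*}
x^nA_n^{(k)}(1/x) = \sum_{i=1}^n\Stirling{n}{i}k^{n-i}\left(\prod_{j=0}^{i-1}(1+jk)\right)x^i(1-x)^{n-i},
\end{equation*}
where the $i=0$ term vanishes since $\Stirling{n}{0}=0$ for $n\geqslant 1$.

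Finally, the substitution $x \mapsto 1/x$ followed by multiplying through by $x^n$ converts the left side to $A_n^{(k)}(x)$ and transforms each summand: $x^n\cdot(1/x)^i(1-1/x)^{n-i} = x^n\cdot x^{-i}\cdot x^{-(n-i)}(x-1)^{n-i} = (x-1)^{n-i}$, yielding exactly~\eqref{Ankx-Explicit}. The argument is essentially a bookkeeping exercise; the only point requiring care is the iterated-derivative formula and the index substitution $x\mapsto 1/x$, where one must track the powers of $x$ and the sign convention in $(x-1)^{n-i}$ versus $(1-x)^{n-i}$ consistently — that is the step where an off-by-one or a sign slip is most likely, so I would verify it against the small cases $A_1^{(k)}(x)=1$ and $A_2^{(k)}(x)=1+(1+k)(x-1)$ implied by the recursion listed before Section~\ref{sec03}.
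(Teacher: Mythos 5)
Your argument is correct and is essentially the paper's own proof: both expand $\left(kx\frac{\mathrm{d}}{\mathrm{d}x}\right)^n$ via~\eqref{Stirling-def} applied to $(1-x)^{-1/k}$, evaluate the iterated derivative to produce the factor $k^{-i}\prod_{j=0}^{i-1}(1+jk)$, compare with~\eqref{Ankx-def05} to get $x^nA_n^{(k)}(1/x)=\sum_{i=1}^n\Stirling{n}{i}k^{n-i}\prod_{j=0}^{i-1}(1+jk)x^i(1-x)^{n-i}$, and finish with the substitution $x\mapsto 1/x$. One small slip in your closing sanity check: the recursion gives $A_2^{(k)}(x)=1+kx$, not $1+(1+k)(x-1)$; your formula does reproduce $1+kx$ (the $i=1,2$ terms give $k(x-1)+(1+k)$), so the aside is miscomputed but the proof itself is unaffected.
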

\begin{proof}
Combining~\eqref{Stirling-def} and~\eqref{Tnak-Euler}, we find that
$$T^n(a)=\sum_{i=1}^n\Stirling{n}{i}k^nx^i\left(\frac{\mathrm{d}}{\mathrm{d}x}\right)^i\frac{1}{(1-x)^{1/k}}=\sum_{i=1}^n\Stirling{n}{i}k^{n-i}\prod_{j=0}^{i-1}(1+jk)x^i\frac{1}{(1-x)^{\frac{1}{k}+i}}.$$
So we have
\begin{equation}\label{Ankx-exp}
x^nA_n^{(k)}(1/x)=\sum_{i=1}^n\Stirling{n}{i}k^{n-i}\prod_{j=0}^{i-1}(1+jk)x^i(1-x)^{n-i},
\end{equation}
which yields the desired explicit formula.
\end{proof}

We now deduce the grammar for the type $B$ Eulerian polynomials $B_n(x)$.
\begin{example}\label{ex-Bn}
Setting $$T=x\frac{\mathrm{d}}{\mathrm{d}x},~a=\frac{x}{\sqrt{1-x^2}}~\text{and}~b=\frac{1}{\sqrt{1-x^2}},$$
we get $T(a)=ab^2,~T(b)=a^2b$.
Let $G=\{a\rightarrow ab^2,~b\rightarrow a^2b\}$.
It follows from~\eqref{Bnxsum} that
\begin{equation}\label{DG3ab}
D_{G}^n(ab)=T^n(ab)=ab^{2n+1}B_n\left(\frac{a^2}{b^2}\right).
\end{equation}
\end{example}

The grammar for second-order Eulerian polynomials was first discovered by Chen-Fu~\cite{Chen17} by using a
grammatical labeling of Stirling permutations. Here we give a simple derivation.
\begin{example}\label{ExG4}
Setting $$T=\frac{x}{1-x}\frac{\mathrm{d}}{\mathrm{d}x},~a=\frac{x}{1-x}~\text{and}~b=\frac{1}{1-x},$$
we get $T(a)=ab^2$ and $T(b)=ab^2$. Let $G=\{a\rightarrow ab^2,~b\rightarrow ab^2\}$.
By~\eqref{eqr}, we see that $$D_{G}^n(a)=T^n(a)=b^{2n+1}C_n\left(\frac{a}{b}\right).$$
\end{example}

In the following, we deduce the grammar for flag ascent-plateau polynomials $T_n(x)$.
\begin{example}\label{ex11}
Setting $$T=x\frac{\mathrm{d}}{\mathrm{d}x},~a=\frac{x}{\sqrt{1-x^2}},~b=\frac{1}{\sqrt{1-x^2}}~\text{and}~c=a+b=r(x),~|x|<1.$$
we get $T(a)=ab^2,~T(b)=a^2b$ and $T(c)=abc$.
Let $G=\{c\rightarrow abc,~a\rightarrow ab^2,~b\rightarrow a^2b\}$.
It follows from~\eqref{Rnx-altrun} that $$D_{G}^n(c)=T^n(c)=cb^{2n}T_n\left(\frac{a}{b}\right).$$

By the Leibniz rule, we obtain
$$D_{G}^{n+1}(c)=D_{G}^n(abc)=\sum_{k=0}^n\binom{n}{k}D_{G}^k(c)D_{G}^{n-k}(ab).$$
Combining this with~\eqref{DG3ab}, we see that
there is a close connection between the flag ascent-plateau polynomials $T_n(x)$ and the type $B$ Eulerian polynomials $B_n(x)$:
 $$T_{n+1}(x)=x\sum_{k=0}^n\binom{n}{k}T_k(x)B_{n-k}(x^2).$$
\end{example}

The {\it Hermite polynomials} $H_n(x)$ can be defined by any of the following relations:
\begin{equation*}
\begin{split}
H_n(x)&=(-1)^n\mathrm{e}^{x^2}\left(\frac{\mathrm{d}}{\mathrm{d}x}\right)^n\mathrm{e}^{-x^2};\\
H_{n+1}(x)&=2xH_n(x)-\frac{\mathrm{d}}{\mathrm{d}x}H_{n}(x),~H_0(x)=1,~H_1(x)=2x.
\end{split}
\end{equation*}
\begin{example}
Setting $T=-\frac{\mathrm{d}}{\mathrm{d}x},~a=\mathrm{e}^{-x^2}~\text{and}~b=x$,
we get $T(a)=2ab$ and $T(b)=-1$. Let $G=\{a\rightarrow 2ab,~b\rightarrow -1\}$.
By induction, it is routine to verify that
$$D_{G}^n(a)=T^n(a)=aH_n(b).$$
\end{example}
In the past decades, there is a larger literature devoted to variations of Eulerian polynomials, see~\cite{Bre94,Lin21,Petersen15} for instances.
Using the differential operator method, in the following subsections we shall deduce new grammars for three cousins of Eulerian polynomials.
\subsection{Narayana polynomials}\label{sec0302}
\hspace*{\parindent}

Let $1\leqslant r\leqslant s$.
Consider the polynomials $A_{r,s;n}(x)$ defined by
$$\left(x^r\frac{\mathrm{d}^s}{\mathrm{d}x^s}\right)^n\frac{1}{1-x}=\frac{A_{r,s;n}(x)}{(1-x)^{sn+1}}.$$
In~\cite{Agapito14}, Agapito discussed properties of $A_{r,s;n}(x)$, including recursion, symmetry and real-rootedness.
Recently, a combinatorial interpretation of $A_{r,s;n}(x)$ was provided in~\cite[Section~6.1]{Agapito21}.
According to~\cite[Corollary~3.9]{Agapito14}, one has
\begin{equation}\label{NA-def}
\left(x\frac{\mathrm{d}^2}{\mathrm{d}x^2}\right)^n\frac{1}{1-x}=\frac{n!(n+1)!xN(A_{n-1},x)}{(1-x)^{2n+1}}~{\text{for $n\geqslant 1$}},
\end{equation}
where $N(A_{n-1},x)$ is the (type $A$) Narayana polynomial.
Explicitly, $$N(A_{n-1},x)=\sum_{k=0}^{n-1}\frac{1}{n}\binom{n}{k+1}\binom{n}{k}x^k.$$

Setting $$T_1=\frac{\mathrm{d}}{\mathrm{d}x},~T_2=x\frac{\mathrm{d}}{\mathrm{d}x},~a=\frac{1}{1-x},~b=\frac{x}{1-x},$$
we obtain $$T_1(a)=T_1(b)=a^2,~T_2(a)=T_2(b)=ab.$$
Therefore, by~\eqref{NA-def}, we get a new grammar for the Narayana polynomials.
\begin{theorem}\label{thmN}
Let $G_1=\{a\rightarrow a^2,~b\rightarrow a^2\}$ and $G_2=\{a\rightarrow ab,~b\rightarrow ab\}$.
Then one has
\begin{equation*}
\left(D_{G_2}D_{G_1}\right)^na=\left(D_{G_2}D_{G_1}\right)^nb=n!(n+1)!a^{n+1}b^nN\left(A_{n-1},\frac{a}{b}\right)~{\text{for any $n\geqslant 1$}},
\end{equation*}
which implies that $N(A_{n-1},x)$ is symmetric.
\end{theorem}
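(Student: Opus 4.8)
The plan is to verify the operator identity $\left(D_{G_2}D_{G_1}\right)^n a = \left(D_{G_2}D_{G_1}\right)^n b$ first, and then translate the differential equation~\eqref{NA-def} into grammatical language via the substitution already set up in the excerpt. For the first part, I would argue by induction on $n$: since $D_{G_1}(a) = D_{G_1}(b) = a^2$ and $D_{G_2}(a) = D_{G_2}(b) = ab$, the two operators applied to either $a$ or $b$ produce the same result already after one application of $D_{G_1}$, and thereafter one is manipulating a single common polynomial in $a$ and $b$; so $\left(D_{G_2}D_{G_1}\right)^n a$ and $\left(D_{G_2}D_{G_1}\right)^n b$ agree for all $n \geqslant 1$. (Strictly, $D_{G_2}D_{G_1}$ means: first apply the formal derivative $D_{G_1}$, then apply $D_{G_2}$ to the result.)

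Next I would pin down the correspondence between the grammar and the differential operators. Under $a = 1/(1-x)$ and $b = x/(1-x)$, a direct check gives $T_1(a) = T_1(b) = a^2$ with $T_1 = \mathrm{d}/\mathrm{d}x$, and $T_2(a) = T_2(b) = ab$ with $T_2 = x\,\mathrm{d}/\mathrm{d}x$, exactly as stated. I would then observe that for any polynomial $P$ in $a$ and $b$, the formal derivative $D_{G_1}$ evaluated at this substitution equals $T_1$ applied to the corresponding function of $x$, and likewise $D_{G_2} \leftrightarrow T_2$; this is because both the formal Leibniz rule and the analytic product rule are determined by their action on the generators, which match. Consequently
\begin{equation*}
\left(D_{G_2}D_{G_1}\right)^n a \;\Big|_{a = \frac{1}{1-x},\, b = \frac{x}{1-x}} \;=\; \left(x\frac{\mathrm{d}}{\mathrm{d}x}\cdot\frac{\mathrm{d}}{\mathrm{d}x}\right)^n \frac{1}{1-x} \;=\; \left(x\frac{\mathrm{d}^2}{\mathrm{d}x^2}\right)^n\frac{1}{1-x},
\end{equation*}
and the right-hand side is $\dfrac{n!(n+1)!\,x\,N(A_{n-1},x)}{(1-x)^{2n+1}}$ by~\eqref{NA-def}.

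It remains to rewrite that closed form in terms of $a$ and $b$. Since $x = b/a$ (equivalently $a - b = 1$, so $x = b/a$ and $1 - x = 1/a$), one has $1/(1-x)^{2n+1} = a^{2n+1}$ and $x = b/a$, whence $x/(1-x)^{2n+1} = a^{2n}b$; and $N(A_{n-1},x) = N(A_{n-1}, b/a)$. Multiplying, the evaluated expression becomes $n!(n+1)!\,a^{2n}b\,N(A_{n-1}, b/a)$. To match the homogeneous form $a^{n+1}b^n N(A_{n-1}, a/b)$ claimed in the statement, I would use that $N(A_{n-1},x)$ has degree $n-1$, so $b^{n-1}N(A_{n-1}, b/a) = a^{n-1} N^{*}(A_{n-1}, a/b)$ where $N^{*}$ is the reversal; the symmetry of the Narayana numbers $\frac1n\binom{n}{k+1}\binom{n}{k}$ under $k \mapsto n-1-k$ gives $N^{*}(A_{n-1},x) = N(A_{n-1},x)$, so $a^{2n}b\,N(A_{n-1},b/a) = a^{n+1}b^n N(A_{n-1},a/b)$. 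Since the grammatical identity is an identity of polynomials in the formal variables $a, b$ (both sides being polynomials that agree under the substitution, which is generic enough — $a$ and $b$ being algebraically independent after forgetting the relation $a-b=1$ requires a small argument, or alternatively one proves the polynomial identity directly by the recurrence $F_{n+1} = D_{G_2}D_{G_1}F_n$ and checking it is preserved), the claimed formula follows, and the appearance of $N(A_{n-1}, a/b)$ on one side versus $N(A_{n-1}, b/a)$ on the other forces the symmetry of $N(A_{n-1},x)$.

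The main obstacle is the last bookkeeping step: one must be careful that the grammatical identity lives in the polynomial ring $\mathbb{Q}[a,b]$ while the differential-operator computation only sees the one-parameter specialization $a = 1/(1-x)$, $b = x/(1-x)$ (which satisfies $a - b = 1$). The cleanest fix is to prove the $\mathbb{Q}[a,b]$-identity directly by induction using $F_{n+1,k}$-style recurrences for the bivariate coefficients, verifying the base case $n=1$ (where $D_{G_2}D_{G_1}(a) = D_{G_2}(a^2) = 2a^2b = 2!\cdot1!\cdot a^2 b\, N(A_0, a/b)$ since $N(A_0,x)=1$), and checking that applying $D_{G_2}D_{G_1}$ to $n!(n+1)!\,a^{n+1}b^n N(A_{n-1},a/b)$ reproduces the $(n+1)$-st term — this reduces to the standard Narayana recurrence. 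Then the differential-operator evaluation serves only as the motivating route that located the formula, and the symmetry of $N(A_{n-1},x)$ is read off from the two equivalent homogeneous forms.
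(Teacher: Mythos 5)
Your proposal is correct and follows essentially the same route as the paper, whose entire argument is the observation that under $a=\frac{1}{1-x}$, $b=\frac{x}{1-x}$ one has $T_1(a)=T_1(b)=a^2$ and $T_2(a)=T_2(b)=ab$, so that $\left(D_{G_2}D_{G_1}\right)^n$ is computed by $\left(x\frac{\mathrm{d}^2}{\mathrm{d}x^2}\right)^n$ acting on $\frac{1}{1-x}$ and \eqref{NA-def} applies. The one worry you raise --- that $a-b=1$ on this curve, so the specialization might not determine the element of $\mathbb{Q}[a,b]$ --- is settled more cleanly than by your backup induction: $\left(D_{G_2}D_{G_1}\right)^n a$ is homogeneous of degree $2n+1$, hence of the form $a^{2n+1}\sum_j c_j(b/a)^j=(1-x)^{-(2n+1)}\sum_j c_jx^j$, and the $c_j$ are determined since $x$ ranges over infinitely many values; the direct induction you sketch would in fact require verifying a three-term (not two-term) recurrence for the Narayana numbers, so it is better left as a remark. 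Your bookkeeping of $N(A_{n-1},a/b)$ versus $N(A_{n-1},b/a)$, and the conclusion that comparing the two homogeneous forms is exactly what yields (or uses) the symmetry $N_k=N_{n-1-k}$, matches the paper's intent and is worth making explicit, since the paper leaves it implicit.
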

\subsection{Descent polynomials of multipermutations}\label{sec0303}
\hspace*{\parindent}

For any $n$-element multiset $M$, a {\it multipermutation} of $M$ is a sequence of its elements.
Let
$P(M)$ be the set of multipermutations of $M$.
A {\it descent} of $\sigma\in P(M)$
is an index $i$ such that $\sigma_i>\sigma_{i+1}$, where $i\in[n-1]$.
Let $\des(\sigma)$ be the number of descents of $\sigma$.
Define
\begin{align*}
P(x;M)&=\sum_{\sigma \in P(M)}x^{\des(\sigma)}.
\end{align*}
Clearly, $xP\left(x;[n]\right)=A_n(x)$.
In the past decades, there has been much interest in the polynomials $P(x;M)$. Let $\{1^{m_1},2^{m_2},\ldots,n^{m_n}\}$ be a multiset,
where the element $i$ appears $m_i$ times.
In~\cite{Carlitz78}, Carlitz-Hoggatt studied the recursion of $P\left(x;\{1^p,2^p,\ldots,n^p\}\right)$, where $p$ is a positive integer.
In~\cite[Theorem~3.23]{Savage15}, Savage-Visontai presented a combinatorial interpretation of $P\left(x;\{1^2,2^2,\ldots,n^2\}\right)$ in terms of $s$-inversion sequences.
The reader is referred to~\cite{Agapito21,Lin21,Ma24,Yan2022} for the recent progress on this subject.

A classical result of MacMahon~\cite[Vol~2, Chapter IV, p.~211]{MacMahon20} says that
$$\frac{\sum_{\sigma\in P(\{1^{p_1},2^{p_2},\ldots,n^{p_n}\})}x^{\des(\sigma)}}{(1-x)^{1+\sum_{i=1}^np_i}}
=\sum_{t\geqslant 0}\frac{(t+1)\cdots(t+p_1)\cdots(t+1)\cdots(t+p_n)}{p_1!\cdots p_n!}x^t.$$
When $p_i=p$ for all $i$, this implies
$$\frac{\sum_{\sigma\in P(\{1^{p},2^{p},\ldots,n^{p}\})}x^{\des(\sigma)}}{(1-x)^{1+np}}
=\sum_{t\geqslant 0}{\binom{t+p}{p}}^nx^t.$$
Multiplying $x^p$ on both sides leads to the following identity:
\begin{equation*}
x^p\frac{\sum_{\sigma\in P(\{1^{p},2^{p},\ldots,n^{p}\})}x^{\des\sigma)}}{(1-x)^{1+np}}
=\sum_{t\geqslant p}{\binom{t}{p}}^nx^t,
\end{equation*}
which yields that
\begin{equation}\label{xpd}
\left(x^p\frac{\mathrm{d}^p}{\mathrm{d}x^p}\right)^n\frac{1}{1-x}=x^p(p!)^n\frac{\sum_{\pi\in P(\{1^{p},2^{p},\ldots,n^{p}\})}x^{\des(\pi)}}{(1-x)^{1+np}}.
\end{equation}
When $p=2$, setting $$T_1=\frac{\mathrm{d}}{\mathrm{d}x},~T_2=x^2\frac{\mathrm{d}}{\mathrm{d}x},~a=\frac{1}{1-x},~b=\frac{x}{1-x},$$
we obtain $$T_1(a)=T_1(b)=a^2,~T_2(a)=T_2(b)=b^2.$$
From~\eqref{xpd}, we immediately get the following result.
\begin{theorem}\label{thmP}
Let $G_1=\{a\rightarrow a^2,~b\rightarrow a^2\}$ and $G_2=\{a\rightarrow b^2,~b\rightarrow b^2\}$.
Then we have
\begin{equation*}
\left(D_{G_2}D_{G_1}\right)^na=\left(D_{G_2}D_{G_1}\right)^nb=2^nab^{2n}P\left(\frac{a}{b};\{1^2,2^2,\ldots,n^2\}\right)~{\text{for $n\geqslant 1$}}.
\end{equation*}
\end{theorem}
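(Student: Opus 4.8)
The plan is to read off the identity from the differential expression~\eqref{xpd} with $p=2$, using the substitution and operator identities set up just before the statement. Write $\varphi$ for the specialization $a\mapsto \tfrac{1}{1-x}$, $b\mapsto \tfrac{x}{1-x}$, regarded as a map from polynomials in $a,b$ into the field of rational functions of $x$. The computations $T_1(a)=T_1(b)=a^2$ and $T_2(a)=T_2(b)=b^2$, with $T_1=\tfrac{\mathrm d}{\mathrm dx}$ and $T_2=x^2\tfrac{\mathrm d}{\mathrm dx}$, say precisely that $\varphi$ intertwines $D_{G_1}$ with $T_1$ and $D_{G_2}$ with $T_2$. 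Observe in addition that $D_{G_1}(a)=D_{G_1}(b)=a^2$, so one application of $D_{G_1}$ already identifies the words $a$ and $b$; hence $(D_{G_2}D_{G_1})^n a=(D_{G_2}D_{G_1})^n b$ is automatic, and it suffices to treat $(D_{G_2}D_{G_1})^n a$.

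First I would record the bookkeeping that makes $\varphi$ invertible on the relevant piece. Each of $D_{G_1}$ and $D_{G_2}$ raises the total $\{a,b\}$-degree of a monomial by exactly $1$, so an immediate induction shows that $(D_{G_2}D_{G_1})^n a$ is homogeneous of degree $2n+1$ in $a$ and $b$. Since $\varphi\!\left(a^{i}b^{\,2n+1-i}\right)=x^{\,2n+1-i}(1-x)^{-(2n+1)}$ for $0\leqslant i\leqslant 2n+1$, and these rational functions are linearly independent over $\mathbb{Q}$, the map $\varphi$ is injective on homogeneous polynomials of degree $2n+1$; thus it is enough to verify the asserted identity after applying $\varphi$.

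Next, because $\varphi$ intertwines $D_{G_i}$ with $T_i$ and both the formal and the ordinary derivatives obey the Leibniz rule, an induction on the number of operators applied yields $\varphi\!\left((D_{G_2}D_{G_1})^n a\right)=(T_2T_1)^n\tfrac{1}{1-x}=\left(x^2\tfrac{\mathrm d^2}{\mathrm dx^2}\right)^n\tfrac{1}{1-x}$, the last equality because $T_2T_1=x^2\tfrac{\mathrm d^2}{\mathrm dx^2}$. Now~\eqref{xpd} with $p=2$ evaluates this to $2^{n}\,x^{2}(1-x)^{-(2n+1)}P\!\left(x;\{1^2,2^2,\ldots,n^2\}\right)$. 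Substituting $x=b/a$ and $1-x=1/a$ rewrites the right-hand side as $2^{n}a^{\,2n-1}b^{2}\,P\!\left(b/a;\{1^2,2^2,\ldots,n^2\}\right)$, a homogeneous polynomial of degree $2n+1$ in the image of $\varphi$; pulling back through the injectivity established above completes the computation.

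The step I expect to cost the most is matching this with the symmetric form in the statement. The equality $a^{\,2n-1}b^{2}P(b/a;M)=ab^{\,2n}P(a/b;M)$, with $M=\{1^2,2^2,\ldots,n^2\}$, is equivalent to the palindromicity $P(x;M)=x^{\,2n-2}P(1/x;M)$. This symmetry is classical: from MacMahon's identity quoted above, $P(x;M)/(1-x)^{2n+1}=\sum_{t\geqslant 0}\binom{t+2}{2}^{n}x^{t}$, and the counting polynomial $\binom{t+2}{2}^{n}=2^{-n}(t+1)^{n}(t+2)^{n}$ is symmetric about $t=-\tfrac32$, so the standard power-series reciprocity $\sum_{t\geqslant 0}g(t)x^{t}=-\sum_{t\geqslant 1}g(-t)x^{-t}$ forces $P(x;M)$ to be palindromic of degree $2n-2$ (its constant term being $\binom{2}{2}^{n}=1$). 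Exactly as for the symmetry of the Narayana polynomials in Theorem~\ref{thmN}, one may alternatively read off this palindromicity as a byproduct of the computation itself. Granting it, the proof is finished; everything else is routine inductions and substitutions.
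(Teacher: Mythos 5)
Your proof is correct and follows essentially the same route as the paper: specialize $a\mapsto\frac{1}{1-x}$, $b\mapsto\frac{x}{1-x}$ so that $D_{G_1},D_{G_2}$ intertwine with $\frac{\mathrm{d}}{\mathrm{d}x}$ and $x^2\frac{\mathrm{d}}{\mathrm{d}x}$, then read the result off from~\eqref{xpd} with $p=2$. The paper's proof consists only of this setup followed by ``From~\eqref{xpd}, we immediately get the following result''; you have additionally supplied the two details it leaves tacit, namely the injectivity of the specialization on homogeneous polynomials of degree $2n+1$ (which justifies pulling the identity back to the grammar side) and, more importantly, the palindromicity $P(x;M)=x^{2n-2}P(1/x;M)$ needed to convert the directly computed form $2^na^{2n-1}b^2P(b/a;M)$ into the symmetric form $2^nab^{2n}P(a/b;M)$ appearing in the statement. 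That last step is genuinely required for the theorem as written, and your derivation of it from MacMahon's formula via the symmetry of $\binom{t+2}{2}^n$ about $t=-\tfrac32$ is sound.
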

Using Theorem~\ref{thmN}, Theorem~\ref{thmP}, one can show the $\gamma$-positivity of the polynomials $N(A_{n-1},x)$ and $P(x;\{1^2,2^2,\ldots,n^2\})$
via the change of grammars~\cite{Ma1902}. We omit the proofs for simplicity.
\subsection{Descent polynomials of Legendre-Stirling permutations}\label{sec0304}
\hspace*{\parindent}

The Legendre-Stirling numbers $\LS(n,k)$ were introduced in~\cite{Everitt02} as the coefficients in the integral
Lagrangian symmetric powers of the classical Legendre second-order differential expression.
The numbers $\LS(n,k)$ can be defined as follows:
$$x^n=\sum_{k=0}^n\LS(n,k){\langle{x}\rangle}_k,$$
where ${\langle{x}\rangle}_k=\prod_{i=0}^{k-1}(x-i(i+1))$ for $k\geqslant 1$ and ${\langle{x}\rangle}_0:=1$.
These numbers satisfy the recursion
$$\LS(n,k)=\LS(n-1,k-1)+k(k+1)\LS(n-1,k),$$
with the initial conditions $\LS(0,0)=1$ and $\LS(0,k)=0$ for $k\geqslant 1$.

Consider
the polynomial $L_k(x)$ defined by
$$\sum_{n=0}^\infty \LS(n+k,n)x^n=\frac{L_k(x)}{(1-x)^{3k+1}}.$$
Egge~\cite{Egge10} found that $L_k(x)$ is the descent polynomial of Legendre-Stirling permutations of order $k$.
In particular, $L_0(x)=1,~L_1(x)=2x,~L_2(x)=4x+24x^2+12x^3$.
It follows from~\cite[Theorem~4.2]{Egge10} that
\begin{equation}\label{Lnx}
\left(\frac{x^3}{1-x}\frac{\mathrm{d}^2}{\mathrm{d}x^2}\right)^n\frac{1}{1-x}=x^{3n+1}\frac{L_n(1/x)}{(1-x)^{3n+1}}.
\end{equation}

By applying the differential operator method, we set
$$T_1=\frac{\mathrm{d}}{\mathrm{d}x},~T_2=\frac{x^3}{1-x}\frac{\mathrm{d}}{\mathrm{d}x},~a=\frac{1}{1-x},~b=\frac{x}{1-x}.$$
So we have $$T_1(a)=T_1(b)=a^2,~T_2(a)=T_2(b)=b^3.$$
From~\eqref{Lnx}, we can now present the following result.
\begin{theorem}\label{thmP}
Let $G_1=\{a\rightarrow a^2,~b\rightarrow a^2\}$ and $G_2=\{a\rightarrow b^3,~b\rightarrow b^3\}$.
Then we have
\begin{equation*}
\left(D_{G_2}D_{G_1}\right)^na=\left(D_{G_2}D_{G_1}\right)^nb=b^{3n+1}L_n\left(\frac{a}{b}\right)~{\text{for $n\geqslant 1$}}.
\end{equation*}
\end{theorem}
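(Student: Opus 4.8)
The plan is to carry out the computation in the free commutative ring $\mathbb{Q}[a,b]$ and then transport it to functions of $x$ through the substitution homomorphism $\phi$ determined by $\phi(a)=\frac{1}{1-x}$ and $\phi(b)=\frac{x}{1-x}$, in the spirit of Section~\ref{sec0301}. The observation that makes this work is that the second-order operator in~\eqref{Lnx} factors as a composition of two first-order operators, each realized by one of the two grammars: with $T_1=\frac{\mathrm{d}}{\mathrm{d}x}$ one has $T_2T_1=\frac{x^3}{1-x}\frac{\mathrm{d}^2}{\mathrm{d}x^2}$, and $\phi$ should intertwine $D_{G_1}$ with $T_1$ and $D_{G_2}$ with $T_2$.

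First I would establish this intertwining. For $i\in\{1,2\}$, both $\phi\circ D_{G_i}$ and $T_i\circ\phi$ are $\phi$-derivations, i.e.\ each sends a product $PQ$ to $\Delta(P)\phi(Q)+\phi(P)\Delta(Q)$ (for $\phi\circ D_{G_i}$ this uses the Leibniz rule for $D_{G_i}$ and that $\phi$ is a ring map; for $T_i\circ\phi$ it uses that $T_i$ is a derivation on functions). Since $\mathbb{Q}[a,b]$ is generated by $a$ and $b$, two such maps agree as soon as they agree on $a$ and $b$, and that agreement is exactly the pair of identities $T_1(a)=T_1(b)=a^2$ and $T_2(a)=T_2(b)=b^3$ recorded just before the statement (read with $a,b$ denoting the functions above). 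Iterating gives $\phi\bigl((D_{G_2}D_{G_1})^na\bigr)=(T_2T_1)^n\phi(a)$, and the same with $b$ in place of $a$. Moreover $(D_{G_2}D_{G_1})^na=(D_{G_2}D_{G_1})^nb$ already in $\mathbb{Q}[a,b]$ for $n\geqslant1$, since $D_{G_1}a=D_{G_1}b=a^2$, so both equal $(D_{G_2}D_{G_1})^{n-1}D_{G_2}(a^2)$.

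Next I would invoke~\eqref{Lnx}. Because $(T_2T_1)^n=\bigl(\frac{x^3}{1-x}\frac{\mathrm{d}^2}{\mathrm{d}x^2}\bigr)^n$, equation~\eqref{Lnx} gives
\[ (T_2T_1)^n\frac{1}{1-x}=x^{3n+1}\frac{L_n(1/x)}{(1-x)^{3n+1}}. \]
On the other hand $\phi(b^{3n+1})=\frac{x^{3n+1}}{(1-x)^{3n+1}}$ and $\phi(a/b)=\frac{1}{x}$, so the right-hand side above is precisely $\phi\bigl(b^{3n+1}L_n(a/b)\bigr)$. Hence $(D_{G_2}D_{G_1})^na$ and $b^{3n+1}L_n(a/b)$ become equal after applying $\phi$.

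The one genuine obstacle is upgrading equality after $\phi$ to equality in $\mathbb{Q}[a,b]$: the map $\phi$ is not injective, since $\phi(a)-\phi(b)=1$, so a priori one only gets the identity modulo $(a-b-1)$. I would close this gap with a grading argument. Each production of $G_1$ (resp.\ $G_2$) replaces one letter by a word of total degree $2$ (resp.\ $3$), so $D_{G_1}$ and $D_{G_2}$ are homogeneous of weights $1$ and $2$ for the total degree in $a,b$; therefore $(D_{G_2}D_{G_1})^na$ is homogeneous of degree $3n+1$, as is $b^{3n+1}L_n(a/b)=\sum_j\ell_{n,j}a^jb^{3n+1-j}$. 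Finally, $\phi$ is injective on homogeneous polynomials of any fixed degree $d$: for such a $P$, homogeneity gives $\phi(P)=(1-x)^{-d}P(1,x)$, which determines the one-variable polynomial $P(1,x)$, hence all coefficients of $P$. Two homogeneous polynomials of degree $3n+1$ that agree after $\phi$ are thus equal in $\mathbb{Q}[a,b]$, which yields the stated identity (combined with the coincidence of the $a$- and $b$-values noted above).
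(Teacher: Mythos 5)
Your proposal is correct and follows essentially the same route as the paper: the paper's entire argument consists of computing $T_1(a)=T_1(b)=a^2$ and $T_2(a)=T_2(b)=b^3$ for $a=\frac{1}{1-x}$, $b=\frac{x}{1-x}$ and then citing~\eqref{Lnx}, which is exactly your intertwining step plus the closed form. The only difference is that you make explicit two points the paper leaves implicit, namely the derivation argument showing $\phi\circ D_{G_i}=T_i\circ\phi$ on all of $\mathbb{Q}[a,b]$ and the homogeneity argument needed to lift the identity back from functions of $x$ (where $\phi$ is not injective) to the polynomial ring; both are handled correctly.
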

\section{Normal ordered grammars}\label{section04}
Motivated by Proposition~\ref{prop-3}, we introduce the following definition.
\begin{definition}
Assume that $u:=u(x,y),v:=(x,y)$ and $w:=w(x,y)$ are given functions. For the context-free grammar $G=\left\{x\rightarrow u(x,y),~y\rightarrow v(x,y)\right\}$,
the powers of $w(x,y)D_G$ can be expressed in the {\it normal ordered form} as
$$\left(w(x,y)D_G\right)^n=\sum_{k=0}^n \xi_{n,k}(x,y)w^k(x,y)D_G^k.$$
\end{definition}

In the following, we only investigate the normal ordered
grammars associated with the Eulerian polynomials. In the same way, one can study the other grammars.

It is clear that Proposition~\ref{grammar03} can be restated as
\begin{equation}\label{xDG701}
(xD_{G'})^n(x)=(xD_{G'})^n(y)=y^{n+1}A_n\left(\frac{x}{y}\right),~{\text{where $G'=\{x\rightarrow y,~y\rightarrow y\}$}};
\end{equation}
\begin{equation}\label{xDG702}
(xyD_{G''})^n(x)=(xyD_{G''})^n(y)=y^{n+1}A_n\left(\frac{x}{y}\right),~{\text{where $G''=\{x\rightarrow 1,~y\rightarrow 1\}$}}.
\end{equation}
In order to investigate the powers of $xD_{G'}$ and $xD_{G''}$, we need to introduce some definitions.
The {\it degree} of a vertex in a tree is referred to the number of its children. We say that $T$ is
a {\it planted binary (resp.~full binary) increasing plane tree} on $[n]$ if it is a binary (resp.~full binary)
tree with $n$ (resp.~$n+1$) unlabeled leaves and $n$ labeled internal vertices, and satisfying the
following conditions (see Figures~\ref{Fig01} and~\ref{Fig03-xy} for examples, where we give every right leaf a weight $y$, and each of the other leaves a weight $x$):
\begin{itemize}
  \item [$(i)$] Internal vertices are labeled by $1,2,\ldots,n$. The node labelled $1$ is distinguished as the root and it has only one child (resp.~it also has two children);
 \item [$(ii)$] Except (resp. Besides) the root, each internal node has exactly two ordered children, which are referred to as a left child and a right child;
  \item [$(iii)$] For each $2\leqslant i\leqslant n$, the labels of the internal nodes in the unique
path from the root to the internal node labelled $i$ form an increasing sequence.
\end{itemize}

\tikzset{
  solid node/.style={circle,draw,inner sep=1.2,fill=black},
  hollow node/.style={circle,draw,inner sep=1.2},
  level distance = 0.6 cm,
  level 1/.style = {sibling distance = 0.8cm},
  level 2/.style = {sibling distance = 0.6cm},
  level 3/.style = {sibling distance = 0.6cm},
  every level 0 node/.style={draw,hollow node},
  every level 1 node/.style={draw,solid node},
  every level 2 node/.style={draw,solid node},
  every level 3 node/.style={draw,solid node}
}

\begin{figure}[ht!]
\begin{center}
\hspace*{\stretch{1}}
\begin{tikzpicture}
\Tree [.\node[label={1}]{};
        \edge; [.\node[label=left:{2}]{};
            \edge; [.\node[label=left:{3}]{};
                \edge; [.\node [label=below:{$x$}] {}; ]
                \edge; [.\node [label=below:{$y$}] {}; ]]
            \edge; [.\node [label=below:{$y$}] {}; ]]
        ]
\end{tikzpicture};\hspace*{\stretch{1}}
\begin{tikzpicture}
\Tree [.\node[label={1}]{};
        \edge; [.\node[label=left:{2}]{};
        	\edge; [.\node [label=below:{$x$}] {}; ]
            \edge; [.\node[label=right:{3}]{};
                \edge; [.\node [label=below:{$x$}] {}; ]
                \edge; [.\node [label=below:{$y$}] {}; ]]]
        ]
\end{tikzpicture}\hspace*{\stretch{1}}
\end{center}
\caption{The planted binary
increasing plane trees on $[3]$ encoded by $xy^2{D_{G'}}$ and $x^2yD_{G'}$, respectively .}
\label{Fig01}
\end{figure}
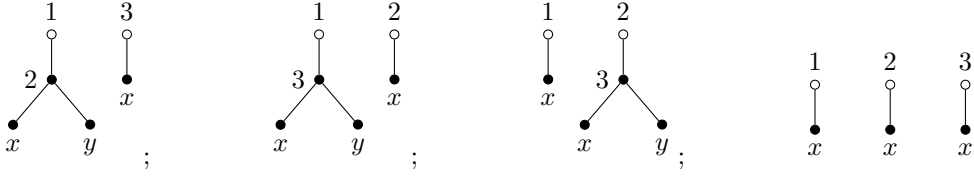
\begin{figure}[ht!]
\begin{center}
\hspace*{\stretch{1}}
\begin{tikzpicture}
    \Tree [.\node[label={1}]{};
        \edge; [.\node[label=left:{2}]{};
        	\edge; [.\node [label=below:{$x$}] {}; ]
        	\edge; [.\node [label=below:{$y$}] {}; ]]
        ]

  \begin{scope}[xshift=1cm]
		\Tree [.\node[label={3}]{};
        	\edge; [.\node[label=below:{$x$}]{};]
        ]
  \end{scope}
\end{tikzpicture};\hspace*{\stretch{1}}
\begin{tikzpicture}
    \Tree [.\node[label={1}]{};
        \edge; [.\node[label=left:{3}]{};
        	\edge; [.\node [label=below:{$x$}] {}; ]
        	\edge; [.\node [label=below:{$y$}] {}; ]]
        ]

  \begin{scope}[xshift=1cm]
    \Tree [.\node[label={2}]{};
        \edge; [.\node[label=below:{$x$}]{};]
        ]
  \end{scope}
\end{tikzpicture};\hspace*{\stretch{1}}
\begin{tikzpicture}
    \Tree [.\node[label={1}]{};
        \edge; [.\node[label=below:{$x$}]{};]
        ]

  \begin{scope}[xshift=1cm]
        \Tree [.\node[label={2}]{};
        	\edge; [.\node[label=left:{3}]{};
        		\edge; [.\node [label=below:{$x$}] {}; ]
        		\edge; [.\node [label=below:{$y$}] {}; ]]
        ]
  \end{scope}
\end{tikzpicture};\hspace*{\stretch{1}}
\begin{tikzpicture}
    \Tree [.\node[label={1}]{};
        \edge; [.\node[label=below:{$x$}]{};]
        ]

  \begin{scope}[xshift=1cm]
        \Tree [.\node[label={2}]{};
        	\edge; [.\node[label=below:{$x$}]{};]
        ]
  \end{scope}

  \begin{scope}[xshift=2cm]
        \Tree [.\node[label={3}]{};
        	\edge; [.\node[label=below:{$x$}]{};]
        ]
  \end{scope}
\end{tikzpicture}\hspace*{\stretch{1}}
\end{center}
\caption{Three 2-forests on $[3]$ encoded by $x^2yD_{G'}^2$, and the 3-forest on $[3]$ encoded by $x^3D_{G'}^3$.}
\label{Fig02}
\end{figure}

\begin{definition}
We say that $F$ is a {\it binary (resp.~full binary) $k$-forest} on $[n]$ if it has $k$ connected components, each connected component is a
planted binary (resp.~full binary) increasing plane tree, the labels of the roots are increasing from left to right and the labels of the $k$-forest form a partition of $[n]$.
\end{definition}
\begin{theorem}\label{Eulerian-tree}
Let $G'=\{x\rightarrow y, y\rightarrow y\}$.
For any $n\geqslant 1$, one has
\begin{equation}\label{aDGthm}
(xD_{G'})^n=\sum_{k=1}^n\sum_{\ell=k}^nA_{n,k,\ell}x^\ell y^{n-\ell}D_{G'}^k,
\end{equation}
where the coefficients $A_{n,k,\ell}$ satisfy the recurrence relation
\begin{equation}\label{Anki-recu}
A_{n+1,k,\ell}=\ell A_{n,k,\ell}+(n-\ell+1)A_{n,k,\ell-1}+A_{n,k-1,\ell-1},
\end{equation}
with the initial conditions $A_{1,1,1}=1$ and $A_{1,k,\ell}=0$ if $(k,\ell)\neq (1,1)$.
The coefficient $A_{n,k,\ell}$ counts binary $k$-forests on $[n]$ with $n-\ell$ right leaves.
\end{theorem}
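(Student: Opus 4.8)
The statement has two parts: the normal--ordered expansion~\eqref{aDGthm} together with the recurrence~\eqref{Anki-recu}, and the enumerative meaning of $A_{n,k,\ell}$. The plan is to get the first part by a short induction on $n$, and the second by a grammatical labeling of binary forests in the spirit of~\cite{Chen17,Dumont96}, using the moves ``insert the vertex $n+1$''.

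\textbf{Step 1 (expansion and recurrence).} I would induct on $n$. For $n=1$ we have $(xD_{G'})^{1}=xD_{G'}=x^{1}y^{0}D_{G'}^{1}$, forcing $A_{1,1,1}=1$ and $A_{1,k,\ell}=0$ otherwise. For the inductive step it suffices to apply $xD_{G'}$ to one monomial $x^{\ell}y^{n-\ell}D_{G'}^{k}$ of~\eqref{aDGthm}. Since $D_{G'}$ is a derivation with $D_{G'}(x)=D_{G'}(y)=y$, one gets $D_{G'}(x^{\ell}y^{n-\ell})=\ell x^{\ell-1}y^{\,n-\ell+1}+(n-\ell)x^{\ell}y^{\,n-\ell}$, so by the Leibniz rule applied to the operator product $x^{\ell}y^{n-\ell}\cdot D_{G'}^{k}$ and left multiplication by $x$,
$$xD_{G'}\!\left(x^{\ell}y^{n-\ell}D_{G'}^{k}\right)=\ell\,x^{\ell}y^{\,n+1-\ell}D_{G'}^{k}+(n-\ell)\,x^{\ell+1}y^{\,n-\ell}D_{G'}^{k}+x^{\ell+1}y^{\,n-\ell}D_{G'}^{k+1}.$$
Summing over the monomials of~\eqref{aDGthm} and reading off the coefficient of $x^{\ell}y^{\,n+1-\ell}D_{G'}^{k}$ in $(xD_{G'})^{n+1}$ gives exactly $\ell A_{n,k,\ell}+(n-\ell+1)A_{n,k,\ell-1}+A_{n,k-1,\ell-1}$, which is~\eqref{Anki-recu}; one checks in passing that the index ranges $k\le\ell\le n$ are respected. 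Hence~\eqref{aDGthm} holds and~\eqref{Anki-recu} determines the $A_{n,k,\ell}$ uniquely from the given initial values.

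\textbf{Step 2 (tree model).} I would use the labeling already indicated in Figures~\ref{Fig01}--\ref{Fig02}: in a binary forest every right leaf gets weight $y$ and every other leaf --- in particular the unique child of a component--root --- gets weight $x$; to a binary forest $F$ on $[n]$ attach the operator weight $w(F)=x^{\ell(F)}y^{\,n-\ell(F)}D_{G'}^{c(F)}$, where $\ell(F)$ is the number of non--right leaves and $c(F)$ the number of components. Since a planted binary increasing plane tree with $m$ internal vertices has exactly $m$ leaves (the root has one child, the other $m-1$ internal vertices have two, and $m-1$ of the $2m-1$ children are internal), a binary $k$--forest on $[n]$ has exactly $n$ leaves, so $\ell(F)=n-\#(\text{right leaves})$; thus counting forests with $n-\ell$ right leaves is the same as counting those with $w(F)=x^{\ell}y^{\,n-\ell}D_{G'}^{c(F)}$. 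I would then prove by induction that $(xD_{G'})^{n}=\sum_{F}w(F)$ over all binary forests $F$ on $[n]$. The induction step is the crux: in a forest on $[n+1]$ the vertex labelled $n+1$ is the largest label, so by the increasing condition along root--to--vertex paths it has only leaves below it, and deleting it with its leaf children returns a binary forest on $[n]$. Conversely, inserting $n+1$ into a forest $F$ on $[n]$ can be done in three ways --- expanding one of the $\ell(F)$ non--right leaves (a root's unique child included) into the internal vertex $n+1$, expanding one of the $n-\ell(F)$ right leaves, or adjoining a new, necessarily rightmost, one--vertex component rooted at $n+1$ --- and a direct check of the effect on the numbers of non--right leaves, right leaves and components shows that these three moves reproduce, term by term, the three monomials of Step 1. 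Therefore $(xD_{G'})^{n+1}=\sum_{F}w(F)$ over forests on $[n+1]$, and comparison with~\eqref{aDGthm} identifies $A_{n,k,\ell}$ with the number of binary $k$--forests on $[n]$ having $n-\ell$ right leaves.

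\textbf{Main obstacle.} Step 1 is routine. The delicate point is in Step 2: verifying that deletion/insertion of $n+1$ is a genuine weight--preserving bijection. One must treat a root's unique child uniformly with ordinary left children (so that the non--right--leaf move has exactly $\ell(F)$ preimages), confirm that adjoining a new component forces it to be the rightmost root and accounts precisely for the $D_{G'}^{k+1}$ produced by $D_{G'}$ acting on $D_{G'}^{k}$, and keep the $(\ell,n-\ell,k)$ bookkeeping consistent under each move. Once this is checked, the theorem follows by induction together with the uniqueness of the solution of~\eqref{Anki-recu}.
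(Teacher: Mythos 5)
Your proposal is correct and follows essentially the same route as the paper: an induction on $n$ yielding the three-term expansion and hence the recurrence~\eqref{Anki-recu}, followed by the same grammatical labeling of binary forests (right leaves weighted $y$, all other leaves weighted $x$, each component contributing a factor $D_{G'}$) and the same three-case insertion of the vertex $n+1$ matching the three monomials term by term. The extra care you take with the deletion/insertion bijection and the leaf count is a slightly more detailed write-up of exactly the argument the paper gives.
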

\begin{proof}
(A) The first few $(xD_{G'})^n$ are given as follows:
\begin{align*}
(xD_{G'})^2&=xyD_{G'}+x^2D_{G'}^2,\\
(xD_{G'})^3&=(xy^2+x^2y)D_{G'}+3x^2yD_{G'}^2+x^3D_{G'}^3,\\
(xD_{G'})^4&=(xy^3+4x^2y^2+x^3y)D_{G'}+(7x^2y^2+4x^3y)D_{G'}^2+6x^3yD_{G'}^3+x^4D_{G'}^4.
\end{align*}
Thus the expansion~\eqref{aDGthm} holds for $n\leqslant 4$. Assume that it holds for $n$.
Since $$(xD_{G'})^{n+1}=xD_{G'}\left(xD_{G'}\right)^n=xD_{G'}\left(\sum_{k=1}^n\sum_{\ell=k}^nA_{n,k,\ell}x^\ell y^{n-\ell}D_{G'}^k\right),$$
it follows that
 \begin{equation}\label{xDGN}
(xD_{G'})^{n+1}=\sum_{k=1}^n\sum_{\ell=k}^nA_{n,k,\ell}\left[\left(\ell x^\ell y^{n-\ell+1}+(n-\ell)x^{\ell+1}y^{n-\ell}\right)D_{G'}^k+x^{\ell+1}y^{n-\ell}D_{G'}^{k+1}\right].
\end{equation}
Extracting the coefficient of $x^\ell y^{n-\ell+1}D_{G'}^k$ on both sides leads to the recursion~\eqref{Anki-recu}.

(B) Let $F$ be a binary $k$-forest.
We first give a labeling of $F$ as follows. Label each planted binary increasing plane tree by $D_{G'}$, a right leaf by $y$, and all the other leaves are labeled by $x$.
The weight of $F$ is defined to be the product of the labels of all trees in $F$. See Figure~\ref{Fig02} for illustrations. Assume that the weight of $F$ is $x^\ell y^{n-\ell}D_{G'}^k$.
Let us examine how to generate a forest $F'$ on $[n+1]$ by adding the vertex $n+1$ to $F$.
We have the following three possibilities:
\begin{itemize}
  \item [$c_1$:] When the vertex $n+1$ is attached to a leaf with label $x$, then $n+1$ becomes a internal node with two children. The weight of $F'$ is $x^{\ell}y^{n-\ell+1}D_{G'}^{k}$;
  \item [$c_2$:] When the vertex $n+1$ is attached to a leaf with label $y$, then $n+1$ becomes a internal node with two children. The weight of $F'$ is $x^{\ell+1}y^{n-\ell}D_{G'}^{k}$;
  \item [$c_3$:] If the vertex $n+1$ is added as a new root, then $F'$ becomes a binary $(k+1)$-forest and the child of $n+1$ has a label $x$.
  The weight of $F'$ is given by $x^{\ell+1}y^{n-\ell}D_{G'}^{k+1}$.
\end{itemize}
As each case corresponds to a term in the right of~\eqref{xDGN}, then $(xD_{G'})^{n+1}$
equals the sum of the weights of all binary $k$-forests on $[n+1]$, where $1\leqslant k\leqslant n+1$. This completes the proof.
\end{proof}

Comparing~\eqref{Anki-recu} with~\eqref{Eulerian02}, we see that $A_{n+1,1,\ell}=\Eulerian{n}{\ell}$.
Let $$A_n(x,y,z)=\sum_{k=1}^n\sum_{\ell=k}^nA_{n,k,\ell}x^\ell y^{n-\ell}z^k.$$
 Multiplying both sides of~\eqref{Anki-recu} by $x^\ell y^{n+1-\ell}z^k$ and summing over all $\ell$ and $k$, we get
 $$A_{n+1}(x,y,z)=x(n+z)A_n(x,y,z)+x(y-x)\frac{\partial}{\partial x}A_n(x,y,z),~A_0(x,y,z)=1.$$
By~\eqref{Eulerian01}, we find that $A_n(x,1,1)=A_n(x)$, where $A_n(x)$ is the Eulerian polynomial. Note that the sum of exponents of $x$ and $y$ equals $n$ in
a general term $x^\ell y^{n-\ell}z^k$.
By induction, it is easy to verify that $yA_n(1,y,1)=A_n(y)$.
Using~\eqref{Anki-recu}, we notice that $A_{n,k,k-1}=0$ and so $A_{n+1,k,k}=kA_{n,k,k}+A_{n,k-1,k-1}$.
Thus $A_{n,k,k}$ satisfies the same
recurrence and initial conditions as $\Stirling{n}{k}$.
In conclusion, we have the following result.
\begin{corollary}\label{Dual-result}
For $n\geqslant 1$, we have
\begin{align*}
&\sum_{k=1}^nA_{n,k,k}z^k=\sum_{k=1}^n\Stirling{n}{k}z^k,\\
&A_n(1,1,z)=z(z+1)\cdots(z+n-1)=\sum_{k=1}^n\stirling{n}{k}z^k,\\
&A_n(x)=A_n(x,1,1)=xA_n(1,x,1)=\frac{\partial}{\partial z}A_{n+1}(x,y,z)|_{y=1,z=0}=\sum_{\ell=1}^n\Eulerian{n}{\ell}x^\ell.
\end{align*}
\end{corollary}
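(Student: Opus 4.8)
The statement is in essence a bookkeeping corollary of Theorem~\ref{Eulerian-tree}, of the three‑variable differential equation for $A_n(x,y,z)$, and of the facts $A_n(x,1,1)=A_n(x)$, $yA_n(1,y,1)=A_n(y)$, $A_{n+1,1,\ell}=\Eulerian{n}{\ell}$ and $A_{n,k,k-1}=0$ already recorded above; the plan is to treat the three displayed lines in turn and, in each case, reduce to one of these together with a classical Stirling identity. For the first line, put $s_{n,k}:=A_{n,k,k}$. Since $A_{n,k,\ell}$ is supported on $k\leqslant\ell\leqslant n$, the middle term $(n-\ell+1)A_{n,k,\ell-1}$ of \eqref{Anki-recu} vanishes at $\ell=k$, so $s_{n+1,k}=ks_{n,k}+s_{n,k-1}$ with $s_{1,1}=A_{1,1,1}=1$ and $s_{1,k}=0$ otherwise. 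This is exactly the recurrence and initial data characterising $\Stirling{n}{k}$, so induction on $n$ gives $s_{n,k}=\Stirling{n}{k}$; multiplying by $z^k$ and summing yields the first identity.

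For the second line, I would specialise $x=y=1$ in $A_{n+1}(x,y,z)=x(n+z)A_n(x,y,z)+x(y-x)\frac{\partial}{\partial x}A_n(x,y,z)$; the second summand dies and $A_{n+1}(1,1,z)=(n+z)A_n(1,1,z)$ with $A_0(1,1,z)=1$, whence $A_n(1,1,z)=z(z+1)\cdots(z+n-1)$ by induction. The expansion of this rising factorial in the signless Stirling numbers of the first kind is classical (compare \eqref{stirling-def}), giving $A_n(1,1,z)=\sum_{k=1}^n\stirling{n}{k}z^k$; equivalently, reading off the coefficient of $z^k$ shows $\sum_{\ell=k}^n A_{n,k,\ell}=\stirling{n}{k}$, which is the dual of \eqref{Fnk} advertised after Proposition~\ref{prop02}.

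For the third line, the equalities $A_n(x)=A_n(x,1,1)$ and, after substituting $y=x$ in the recorded identity $yA_n(1,y,1)=A_n(y)$, also $xA_n(1,x,1)=A_n(x)$ are immediate. For the derivative expression, differentiate $A_{n+1}(x,y,z)=\sum_{k=1}^{n+1}\sum_{\ell=k}^{n+1}A_{n+1,k,\ell}x^\ell y^{n+1-\ell}z^k$ in $z$ and set $z=0$: every layer with $k\geqslant 2$ is killed and there remains $\sum_{\ell=1}^{n+1}A_{n+1,1,\ell}x^\ell y^{n+1-\ell}$; now put $y=1$ and use $A_{n+1,1,\ell}=\Eulerian{n}{\ell}$ — which is $0$ for $\ell>n$, so in particular $A_{n+1,1,n+1}=0$ — to obtain $\sum_{\ell=1}^n\Eulerian{n}{\ell}x^\ell=A_n(x)$, as claimed.

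I do not expect a genuine obstacle here, since everything is a specialisation or termwise differentiation of identities established above; the only points that need care are keeping the index ranges straight so that \eqref{Anki-recu} collapses to the exact Stirling‑second‑kind and Eulerian recurrences, confirming the vanishing $A_{n,k,k-1}=0$ and $A_{n+1,1,n+1}=0$ used in the reductions, and citing the two standard Stirling facts (uniqueness of $\Stirling{n}{k}$ from its recurrence, and the rising‑factorial generating polynomial $z(z+1)\cdots(z+n-1)=\sum_k\stirling{n}{k}z^k$).
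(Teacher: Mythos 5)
Your proposal is correct and follows essentially the same route as the paper: the paper also derives the first line from the collapse of \eqref{Anki-recu} at $\ell=k$ via $A_{n,k,k-1}=0$, the second from the specialization $x=y=1$ of the three-variable recurrence for $A_n(x,y,z)$, and the third from the recorded identities $A_n(x,1,1)=A_n(x)$, $yA_n(1,y,1)=A_n(y)$ and $A_{n+1,1,\ell}=\Eulerian{n}{\ell}$. The only difference is presentational: you spell out the index-range checks (e.g.\ $A_{n+1,1,n+1}=0$) that the paper leaves implicit.
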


\begin{figure}[ht!]
\begin{center}
\hspace*{\stretch{1}}
\begin{tikzpicture}
\Tree [.\node[label=left:{1}]{};
            \edge; [.\node[label=left:{2}]{};
                \edge; [.\node [label=below:{$x$}] {}; ]
                \edge; [.\node [label=below:{$y$}] {}; ]]
            \edge; [.\node [label=below:{$y$}] {}; ]]
        ]
\end{tikzpicture};\hspace*{\stretch{1}}
\begin{tikzpicture}
\Tree [.\node[label=left:{1}]{};
        	\edge; [.\node [label=below:{$x$}] {}; ]
            \edge; [.\node[label=right:{2}]{};
                \edge; [.\node [label=below:{$x$}] {}; ]
                \edge; [.\node [label=below:{$y$}] {}; ]]]
        ]
\end{tikzpicture}\hspace*{\stretch{1}}
\end{center}
\caption{The planted full binary
increasing plane trees on $[2]$ encoded by $xy^2{D_{G''}}$ and $x^2yD_{G''}$, respectively .}
\label{Fig03-xy}
\end{figure}

As a variant of Theorem~\ref{Eulerian-tree}, we now present the following result.
\begin{theorem}\label{Eulerian-fulltree}
Let $G''=\{x\rightarrow 1, y\rightarrow 1\}$.
For any $n\geqslant 1$, we have
\begin{equation}\label{full-xy}
(xyD_{G''})^n=\sum_{k=1}^n\sum_{\ell=k}^na_{n,k,\ell}x^\ell y^{n+k-\ell}D_{G''}^k,
\end{equation}
where the coefficients $a_{n,k,\ell}$ satisfy the recurrence relation
\begin{equation}\label{anki-recu}
a_{n+1,k,\ell}=\ell a_{n,k,\ell}+(n+k-\ell+1)a_{n,k,\ell-1}+a_{n,k-1,\ell-1},
\end{equation}
with the initial conditions $a_{1,1,1}=1$ and $a_{1,k,\ell}=0$ if $(k,\ell)\neq (1,1)$.
The coefficient $a_{n,k,\ell}$ counts full binary $k$-forests on $[n]$ with $\ell$ left leaves.
Moreover, we have
\begin{equation}\label{xyG3}
(xyD_{G''})^n=\sum_{k=1}^n\sum_{\ell=k}^{\lrf{(n+k)/2}}\gamma(n,k,\ell)(xy)^\ell (x+y)^{n+k-2\ell}D_{G''}^k,
\end{equation}
where the coefficients $\gamma(n,k,\ell)$ satisfy the recursion
\begin{equation}\label{xyGmma}
\gamma(n+1,k,\ell)=\ell\gamma(n,k,\ell)+2(n+k-2\ell+2)\gamma(n,k,\ell-1)+\gamma(n,k-1,\ell-1),
\end{equation}
with the initial conditions $\gamma(1,1,1)=1$ and $\gamma(1,k,\ell)=0$ for all $(k,\ell)\neq (1,1)$.
\end{theorem}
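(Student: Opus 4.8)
The plan is to follow the template of the proof of Theorem~\ref{Eulerian-tree}, establishing the three assertions in turn: the operator recurrence \eqref{anki-recu}, the forest interpretation of $a_{n,k,\ell}$, and the symmetrized expansion \eqref{xyG3} together with \eqref{xyGmma}. First I would prove \eqref{full-xy} and \eqref{anki-recu} simultaneously by induction on $n$; the base case $n=1$ is immediate since $(xyD_{G''})^1=xyD_{G''}$ forces $a_{1,1,1}=1$. For the inductive step, write $(xyD_{G''})^{n+1}=xyD_{G''}\bigl((xyD_{G''})^n\bigr)$ and apply $xyD_{G''}$ termwise to the assumed expansion. Since $G''=\{x\to 1,\ y\to 1\}$ gives $D_{G''}(x)=D_{G''}(y)=1$, one has
\[
xyD_{G''}\!\left(x^\ell y^{n+k-\ell}D_{G''}^k\right)=\ell\,x^\ell y^{n+k-\ell+1}D_{G''}^k+(n+k-\ell)\,x^{\ell+1}y^{n+k-\ell}D_{G''}^k+x^{\ell+1}y^{n+k-\ell+1}D_{G''}^{k+1},
\]
and extracting the coefficient of $x^\ell y^{n+1+k-\ell}D_{G''}^k$ on both sides yields \eqref{anki-recu}; the bounds $k\le\ell\le n$ are preserved because the three source terms shift $(k,\ell)$ by $(0,0)$, $(0,-1)$ and $(-1,-1)$.

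For the forest model I would weight a full binary $k$-forest $F$ on $[n]$ by assigning $D_{G''}$ to each component, the weight $x$ to each left leaf, and $y$ to each right leaf. A full binary tree with $m$ internal vertices has $m+1$ leaves, so a full binary $k$-forest on $[n]$ has exactly $n+k$ leaves; if $\ell$ of them are left leaves, the weight of $F$ is $x^\ell y^{n+k-\ell}D_{G''}^k$ (see Figure~\ref{Fig03-xy}). Then I would analyze how the new largest label $n+1$ is inserted into $F$ to produce a full binary forest on $[n+1]$: it may be attached at one of the $\ell$ left leaves (that leaf becomes the internal vertex $n+1$ with fresh children labelled $x$ and $y$, so the left-leaf count is unchanged, contributing $\ell\,a_{n,k,\ell}$), at one of the $n+k-\ell$ right leaves (which raises the left-leaf count by one, contributing $(n+k-\ell+1)\,a_{n,k,\ell-1}$ after reindexing), or as a brand new single-vertex component with children $x$ and $y$, necessarily placed as the rightmost root, contributing $a_{n,k-1,\ell-1}$. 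Since these three cases are mutually exclusive and exhaustive, preserve the increasing condition on root-to-node paths, and reproduce the three terms of \eqref{anki-recu}, the induction gives that $a_{n,k,\ell}$ counts full binary $k$-forests on $[n]$ with $\ell$ left leaves.

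For the symmetrized expansion \eqref{xyG3} there are two natural routes. Reversing the order of the two children at every internal vertex of a full binary increasing plane tree is an involution on full binary $k$-forests that interchanges left and right leaves, so $a_{n,k,\ell}=a_{n,k,\,n+k-\ell}$; hence $\sum_{\ell}a_{n,k,\ell}x^\ell y^{n+k-\ell}$ is a symmetric homogeneous polynomial of degree $n+k$ divisible by $(xy)^k$, and therefore expands uniquely as a nonnegative-integer combination of the $(xy)^\ell(x+y)^{n+k-2\ell}$ with $k\le\ell\le\lfloor(n+k)/2\rfloor$, which is \eqref{xyG3}. More directly, I would prove \eqref{xyG3} by induction on $n$: using $D_{G''}(xy)=x+y$ and $D_{G''}(x+y)=2$, one computes, with $m=n+k-2\ell$,
\[
xyD_{G''}\!\left((xy)^\ell(x+y)^{m}D_{G''}^k\right)=\ell\,(xy)^\ell(x+y)^{m+1}D_{G''}^k+2m\,(xy)^{\ell+1}(x+y)^{m-1}D_{G''}^k+(xy)^{\ell+1}(x+y)^{m}D_{G''}^{k+1},
\]
and extracting the coefficient of $(xy)^\ell(x+y)^{n+1+k-2\ell}D_{G''}^k$ gives exactly \eqref{xyGmma}, with base case $\gamma(1,1,1)=1$.

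The computations behind \eqref{anki-recu} and \eqref{xyGmma} are routine once the exponents are tracked carefully. The step that needs genuine care is the bookkeeping in the forest model: that attaching $n+1$ at a right leaf raises the left-leaf count rather than the right-leaf count, that the new single-vertex component contributes one left leaf and one right leaf, and that the new root has exactly one admissible (rightmost) position. These are precisely the facts that force the three insertion cases to reproduce the three terms of \eqref{anki-recu}, and they also govern the index ranges $k\le\ell\le n$ and $k\le\ell\le\lfloor(n+k)/2\rfloor$.
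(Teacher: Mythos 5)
Your proposal is correct and follows essentially the same three-step route as the paper: termwise application of $xyD_{G''}$ with coefficient extraction for \eqref{anki-recu}, the same leaf-labelled insertion argument (attach $n+1$ at an $x$-leaf, at a $y$-leaf, or as a new rightmost root) for the forest interpretation, and the change of variables $u=xy$, $v=x+y$ with $D_{G''}(u)=v$, $D_{G''}(v)=2$ for \eqref{xyG3}--\eqref{xyGmma}. The only caveat is that your first suggested route to \eqref{xyG3} overclaims: symmetry of $\sum_{\ell}a_{n,k,\ell}x^{\ell}y^{n+k-\ell}$ gives a unique integer expansion in the basis $(xy)^{\ell}(x+y)^{n+k-2\ell}$ but not its nonnegativity, so the direct inductive computation you give second (which is the paper's argument) is the one that actually carries the proof.
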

\begin{proof}
(A) The first few $(xyD_{G''})^n$ are given as follows:
\begin{align*}
(xyD_{G''})^2&=(xy^2+x^2y)D_{G''}+x^2y^2D_{G''}^2,\\
(xyD_{G''})^3&=(xy^3+4x^2y^2+x^3y)D_{G''}+(3x^2y^3+3x^3y^2)D_{G''}^2+x^3y^3D_{G''}^3,\\
(xyD_{G''})^4&=(xy^4+11x^2y^3+11x^3y^2+x^4y)D_{G''}+(7x^2y^4+22x^3y^3+7x^4y^2)D_{G''}^2+\\
&(6x^3y^4+6x^4y^3)D_{G''}^3+x^4y^4D_{G''}^4.
\end{align*}
Thus~\eqref{full-xy} holds for $n\leqslant 4$. Assume that the expansion holds for $n$.
Then we have
\begin{align*}
&(xyD_{G''})^{n+1}\\
&=xyD_{G''}\left(\sum_{k=1}^n\sum_{\ell=k}^na_{n,k,\ell}x^\ell y^{n+k-\ell}D_{G''}^k\right)\\
&=\sum_{k=1}^n\sum_{\ell=k}^na_{n,k,\ell}\left[\left(\ell x^\ell y^{n+k-\ell+1}+(n+k-\ell)x^{\ell+1}y^{n+k-\ell}\right)D_{G''}^k+x^{\ell+1}y^{n+k-\ell+1}D_{G''}^{k+1}\right].
\end{align*}
Extracting the coefficient of $x^\ell y^{n+k-\ell+1}D_{G''}^k$ on both sides leads to the recursion~\eqref{anki-recu}.

(B) Let $F$ be a full binary $k$-forest.
We first give a labeling of $F$ as follows. Label each planted full binary increasing plane tree by $D_{G''}$, a left leaf by $x$ and a right leaf by $y$.
The weight of $F$ is defined to be the product of the labels of all trees in $F$. See Figure~\ref{Fig03-xy} for illustrations. Assume that the weight of $F$ is $x^\ell y^{n+k-\ell}D_{G''}^k$.
Let us examine how to generate a forest $F'$ on $[n+1]$ by adding the vertex $n+1$ to $F$.
We have the following three possibilities:
\begin{itemize}
  \item [$c_1$:] When the vertex $n+1$ is attached to a leaf with label $x$, then $n+1$ becomes a internal node with two children.
  The weight of $F'$ is $x^{\ell}y^{n+k-\ell+1}D_{G''}^{k}$;
  \item [$c_2$:] When the vertex $n+1$ is attached to a leaf with label $y$, then $n+1$ becomes a internal node with two children.
  The weight of $F'$ is $x^{\ell+1}y^{n+k-\ell}D_{G''}^{k}$;
  \item [$c_3$:] If the vertex $n+1$ is added as a new root, then $F'$ becomes a full binary $(k+1)$-forest, the left child of $n+1$ has a label $x$,
  while the right child of $n+1$ has a label $y$.
  The weight of $F'$ is given by $x^{\ell+1}y^{n+k-\ell+1}D_{G''}^{k+1}$.
\end{itemize}
The above three cases exhaust all the possibilities. Thus $(xyD_{G''})^{n+1}$
equals the sum of the weights of all full binary $k$-forests on $[n+1]$, where $1\leqslant k\leqslant n+1$.

(C) We now consider a change of the grammar $G''$. Setting $u=xy$ and $v=x+y$, we get
$$D_{G''}(u)=D_{G''}(xy)=v,~D_{G''}(v)=D_{G''}(x+y)=2.$$
Let $G'''=\{u\rightarrow v,~v\rightarrow 2\}$. Then we have
$\left(xyD_{G''}\right)^n=\left(uD_{G'''}\right)^n$.
Note that
$$\left(uD_{G'''}\right)^2=uvD_{G'''}+u^2D_{G'''}^2,~\left(uD_{G'''}\right)^3=(uv^2+2u^2)D_{G'''}+3u^2vD_{G'''}^2+u^3D_{G'''}^3.$$
By induction, it is easy to check that
\begin{equation*}
\left(uD_{G'''}\right)^n=\sum_{k=1}^n\sum_{\ell=k}^{\lrf{(n+k)/2}}\gamma(n,k,\ell)u^\ell v^{n+k-2\ell}D_{G'''}^k,
\end{equation*}
where the coefficients $\gamma(n,k,\ell)$ satisfy the recursion~\ref{xyGmma}.
Then upon taking $u=xy$ and $v=x+y$, we get~\eqref{xyG3}.
This completes the proof.
\end{proof}

Comparing~\eqref{anki-recu} with~\eqref{Eulerian02}, we notice that $a_{n,1,\ell}=\Eulerian{n}{\ell}$.
Define $$a_n(x,y,z)=\sum_{k=1}^n\sum_{\ell=k}^na_{n,k,\ell}x^\ell y^{n+k-\ell}z^k,~a_0(x,y,z)=1.$$
Multiplying both sides of~\eqref{anki-recu} by $x^\ell y^{n+k-\ell+1}z^k$ and summing over all $\ell$ and $k$, we obtain
$$a_{n+1}(x,y,z)=x(n+yz)a_n(x,y,z)+x(y-x)\frac{\partial}{\partial x}a_n(x,y,z)+xz\frac{\partial}{\partial z}a_n(x,y,z).$$
In particular,
$$a_{n+1}(1,1,z)=(n+z)a_n(1,1,z)+z\frac{\mathrm{d}}{\mathrm{d} z}a_n(1,1,z),~a_0(1,1,z)=1.$$
Let $a_n(1,1,z)=\sum_{k=1}^nL(n,k)z^k$. It follows that
$L(n+1,k)=(n+k)L(n,k)+L(n,k-1)$,
from which we notice that $L(n,k)$ is the (signless) {\it Lah number}. Explicitly, $$L(n,k)=\binom{n-1}{k-1}\frac{n!}{k!}.$$
\begin{corollary}
For $n\geqslant 1$,
we have $$a_n(1,1,z)=\sum_{k=1}^n\binom{n-1}{k-1}\frac{n!}{k!}z^k.$$
\end{corollary}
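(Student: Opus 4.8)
The recurrence $L(n+1,k)=(n+k)L(n,k)+L(n,k-1)$, together with $L(1,1)=1$ and $L(n,k)=0$ whenever $k<1$ or $k>n$, was obtained just above the statement by extracting the coefficient of $z^k$ from $a_{n+1}(1,1,z)=(n+z)a_n(1,1,z)+z\frac{\mathrm{d}}{\mathrm{d}z}a_n(1,1,z)$, and it determines the array $\{L(n,k)\}$ uniquely. So the plan is simply to check that the claimed closed form $\ell(n,k):=\binom{n-1}{k-1}\frac{n!}{k!}$ satisfies the same recurrence and the same boundary data; an induction on $n$ then forces $L(n,k)=\ell(n,k)$, which is the asserted identity.

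For the inductive step, I would clear the common factor $n!/k!$ from $\ell(n+1,k)=(n+k)\ell(n,k)+\ell(n,k-1)$, which reduces the claim to the purely binomial identity
$$(n+1)\binom{n}{k-1}=(n+k)\binom{n-1}{k-1}+k\binom{n-1}{k-2}.$$
Using Pascal's rule $\binom{n}{k-1}=\binom{n-1}{k-1}+\binom{n-1}{k-2}$ on the left and cancelling, what remains is $(k-1)\binom{n-1}{k-1}=(n-k+1)\binom{n-1}{k-2}$, and both sides are equal to $\frac{(n-1)!}{(k-2)!\,(n-k)!}$. The base case $n=1$ is immediate since $\ell(1,1)=1$ and $\ell(1,k)=0$ for $k\geqslant 2$. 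This completes the argument.

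I do not expect any genuine difficulty here; the whole thing is routine recurrence bookkeeping, with the only mild care being the treatment of the out-of-range values $L(n,k)=0$. As an alternative I would note (but not carry out) a combinatorial derivation: by part (B) of Theorem~\ref{Eulerian-fulltree} one has $a_n(1,1,z)=\sum_k\big(\#\{\text{full binary }k\text{-forests on }[n]\}\big)z^k$, so it would suffice to give a bijection between full binary $k$-forests on $[n]$ and partitions of $[n]$ into $k$ nonempty linearly ordered blocks --- the structures counted by the unsigned Lah number $\binom{n-1}{k-1}\frac{n!}{k!}$ --- recovering the order of each block from the increasing-path structure of the corresponding tree component. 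The recurrence proof above is shorter and entirely self-contained within the excerpt.
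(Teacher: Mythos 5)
Your proposal is correct and follows essentially the same route as the paper: the paper derives the recurrence $L(n+1,k)=(n+k)L(n,k)+L(n,k-1)$ from the differential recursion for $a_n(1,1,z)$ and then identifies $L(n,k)$ as the signless Lah number with the stated closed form, which is exactly the recurrence-plus-closed-form verification you carry out (your binomial computation checks out). The only difference is that you spell out the induction the paper leaves implicit.
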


A partition of $[n]$ into {\it lists} is a set
partition of $[n]$ for which the elements of each block are linearly ordered.
It is well known that $L(n,k)$ counts set partitions of $[n]$ into $k$ lists (see~\cite[A008297]{Sloane}).
We always assume that each list is prepended and appended by $0$. Given a list $\sigma_1\sigma_2\cdots\sigma_i$.
We identify it with the word $0\sigma_1\sigma_2\cdots\sigma_i0$.
We say that an index $p\in \{0,1,2,\ldots,i-1\}$ is an {\it ascent} if $\sigma_p<\sigma_{p+1}$, and $q\in \{1,2,\ldots,i\}$ is a {\it descent} if $\sigma_p>\sigma_{p+1}$, where we set $\sigma_0=\sigma_{i+1}=0$.
Let $F$ be a full binary $k$-forest. Following~\cite[p.~51]{Stanley11}, a bijection from full binary $k$-forests to set partitions with $k$ lists can be given as follows: Read the internal vertices of trees (from left to right) of $F$ in symmetric order, i.e.,
read the labels of the left subtree (in symmetric order, recursively),
then the label of the root, and then the labels of the right subtree. By this correspondence, we get the following result.
\begin{corollary}
Let $a_{n,k,\ell}$ be defined by~\eqref{full-xy}. Then $a_{n,k,\ell}$ is the number of set partitions of $[n]$ into $k$ lists with $\ell$ ascents and $n+k-\ell$ descents.
\end{corollary}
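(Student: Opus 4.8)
The plan is to combine the two bijections already assembled in the excerpt: the combinatorial model for $a_{n,k,\ell}$ furnished by part (B) of Theorem~\ref{Eulerian-fulltree} (full binary $k$-forests on $[n]$ with $\ell$ left leaves), and the bijection from full binary $k$-forests to set partitions of $[n]$ into $k$ lists obtained by reading internal vertices in symmetric order, as recalled from Stanley~\cite[p.~51]{Stanley11}. So the entire task reduces to a single local claim: \emph{under the symmetric-order reading, left leaves of the forest correspond to ascents of the resulting lists, and right leaves correspond to descents.} Once this statistic-preservation is established, the corollary follows immediately, since Theorem~\ref{Eulerian-fulltree} says $a_{n,k,\ell}$ counts full binary $k$-forests with $\ell$ left leaves, hence (transporting along the bijection) set partitions into $k$ lists with $\ell$ ascents, and because a list that is identified with the word $0\sigma_1\cdots\sigma_i0$ of length $i$ contributes exactly $i+1$ positions split between ascents and descents, summing over the $k$ lists gives $n+k$ positions total, so the number of descents is $(n+k)-\ell$, matching the exponent of $y$ in~\eqref{full-xy}.

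First I would set up the symmetric-order traversal carefully on a single planted full binary increasing plane tree. Each internal vertex $v$ has a left child and a right child, each of which is either an internal vertex or a leaf; a leaf is called a left leaf or a right leaf according to whether it is the left or the right child of its parent. The symmetric-order reading outputs, for the whole forest, a sequence of internal-vertex labels; the key is to see how a leaf ``sits between'' two consecutive output symbols (or at an end, i.e.\ adjacent to the padding $0$). The clean way to do this is to regard the padded list $0\sigma_1\cdots\sigma_i 0$ as having its $i+1$ \emph{gaps} (before $\sigma_1$, between consecutive entries, after $\sigma_i$) in natural bijection with the $i+1$ leaves of the corresponding tree, where the gap between the contributions of two vertices corresponds precisely to the unique leaf encountered between them in the traversal, and the two extreme gaps correspond to the leftmost and rightmost leaves of the tree. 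This is the standard ``leaves $=$ gaps'' correspondence for binary search / symmetric-order structures, and I would state it as a small lemma and verify it by induction on the size of the tree.

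Next I would check the sign at each gap. Consider the leaf $\ell^\ast$ sitting in the gap between the output of vertex $u$ (immediately before) and vertex $w$ (immediately after) in symmetric order. By the structure of symmetric-order reading, $u$ is the maximal-rightmost-subtree ancestor situation and $w$ is the minimal-leftmost-subtree situation relative to $\ell^\ast$; concretely, $\ell^\ast$ is the right child of some vertex iff its nearest ancestor ``from the left'' $u$ is its own parent (so $\ell^\ast$ is $u$'s right leaf), while $\ell^\ast$ is a left child iff its nearest ancestor ``from the right'' $w$ is its parent. The \emph{increasing} condition on the tree then forces the comparison: if $\ell^\ast$ is a right leaf with parent $u$, then $u$ appears just before $\ell^\ast$'s gap and the vertex $w$ just after is a proper descendant-path vertex that is larger than $u$ along the root-path, giving $\sigma_p = \mathrm{label}(u) < \mathrm{label}(w) = \sigma_{p+1}$, an ascent — and symmetrically a left leaf yields a descent; the boundary gaps adjacent to $0$ are handled by the conventions $\sigma_0=\sigma_{i+1}=0$ together with the observation that the leftmost leaf of a planted tree is always a left leaf (child on the left spine) and the rightmost is a right leaf, matching $0<\sigma_1$ being... wait, I need the opposite: I would re-examine orientation so that ``left leaf $\leftrightarrow$ ascent'' is consistent with $a_{n,1,\ell}=\Eulerian{n}{\ell}$ and with the padded-word ascent/descent definitions given just above the corollary, adjusting the spine statement (leftmost leaf is a left leaf $\Rightarrow$ the gap before $\sigma_1$ should be read with $\sigma_0=0<\sigma_1$, i.e.\ an ascent) accordingly. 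I expect \textbf{this sign bookkeeping — pinning down which of the two neighbours of a gap is the leaf's parent and reading off the correct inequality from the increasing-tree condition, including the two boundary gaps — to be the only real obstacle}; the rest is the formal induction establishing the leaves-equal-gaps correspondence, which is routine. Finally, assembling the forest case from the single-tree case is immediate because the symmetric-order reading of the forest is the concatenation of the readings of its components (roots increasing left to right), and the lists are exactly these component readings, so left/right leaves and ascents/descents add up componentwise, completing the proof.
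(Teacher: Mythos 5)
Your proposal follows essentially the same route as the paper, which simply invokes the symmetric-order bijection from full binary $k$-forests to partitions of $[n]$ into $k$ lists and transports the left-leaf statistic; the paper in fact omits the verification that left leaves become ascents, so your attempt to check this is the only substantive content, and your one flagged loose end (the orientation) resolves in your favour. Concretely: a right leaf of an internal vertex $u$ occupies the gap immediately after $u$ in the symmetric-order word, and the next symbol is either the terminal $0$ or an ancestor of $u$, which by the increasing condition has a \emph{smaller} label than $u$ (not larger, as your intermediate sentence asserts), so that gap is a descent; dually, a left leaf of $w$ occupies the gap immediately before $w$, preceded by $0$ or an ancestor of $w$, hence an ascent. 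With that correction the boundary gaps (leftmost leaf of a planted full binary tree is a left leaf, giving $0<\sigma_1$; rightmost is a right leaf, giving $\sigma_i>0$) fall in line, and the count $\ell$ ascents, $n+k-\ell$ descents follows as you describe.
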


For a permutation $\pi\in\msn$ with $\pi(0)=\pi(n+1)=0$, we say that the entry $\pi(i)$
\begin{itemize}
  \item is a {\it valley} if $\pi(i-1)>\pi(i)<\pi(i+1)$;
  \item is a {\it double descent} if $\pi(i-1)>\pi(i)>\pi(i+1)$.
\end{itemize}
Let $\val(\pi)$ (resp.~$\dd(\pi)$) denote the number of valleys (resp.~double descents) in $\pi$.
Define $$\gamma(n,\ell)=\#\{\pi\in\msn: \val(\pi)=\ell,~\dd(\pi)=0\}.$$
A classical result of Foata-Sch\"utzenberger~\cite{Foata73} states that the Eulerian polynomials
have the following $\gamma$-expansion:
$$A_n(x)=x\sum_{\ell=0}^{\lrf{(n-1)/2}}\gamma(n,\ell)x^\ell(1+x)^{n-1-2\ell}.$$
Br\"and\'{e}n~\cite{Branden08} reproved this expansion by introducing the modified Foata-Strehl action.
Let $\mathcal{S}(n,k)$ be the set of partitions of $[n]$ into $k$ lists.
Applying the modified Foata-Strehl action on each list of an element in $\mathcal{S}(n,k)$, we find the following result, and omit the proof for simplicity.
\begin{corollary}
For $n\geqslant 1$, the polynomials $a_n(x,y,z)$ is partial $\gamma$-positive, i.e.,
\begin{align*}
\sum_{k=1}^n\sum_{\ell=k}^na_{n,k,\ell}x^\ell y^{n+k-\ell}z^k&=\sum_{k=1}^nz^k\sum_{\ell=k}^{\lrf{(n+k)/2}}\gamma(n,k,\ell)(xy)^\ell (x+y)^{n+k-2\ell}\\
&=\sum_{k=1}^n(xyz)^k\sum_{i=0}^{\lrf{(n-k)/2}}\gamma(n,k,k+i)(xy)^i (x+y)^{n-k-2i},
\end{align*}
where $\gamma(n,k,k+i)$ counts partitions of $[n]$ into $k$ lists with $i$ valleys and with no double descents.
\end{corollary}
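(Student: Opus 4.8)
The plan is to prove the three displayed identities in sequence, using the combinatorial model of partitions into lists together with the modified Foata--Strehl action. The first identity is just a restatement of~\eqref{xyG3} after substituting $u=xy$, $v=x+y$ and rewriting the coefficient sum as a sum over $k$ of the terms with $\ell$ ranging from $k$ to $\lrf{(n+k)/2}$; the second identity is the reindexing $\ell=k+i$, which puts the factor $(xyz)^k$ out front and lets $i$ run from $0$ to $\lrf{(n-k)/2}$. So the real content is the combinatorial interpretation of $\gamma(n,k,k+i)$ as the number of partitions of $[n]$ into $k$ lists with $i$ valleys and no double descents.

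First I would fix the combinatorial setup: by the preceding corollary, $a_{n,k,\ell}$ counts partitions of $[n]$ into $k$ lists with $\ell$ ascents and $n+k-\ell$ descents, where each list $\sigma_1\cdots\sigma_i$ is read as the word $0\sigma_1\cdots\sigma_i0$. For such a $k$-list partition, the total number of ascents plus descents over all lists is $n+k$ (each list of length $i$ contributes $i+1$ gaps, and the $k$ list-lengths sum to $n$). Next I would introduce, for each list, the classification of its entries as valleys, peaks, double ascents and double descents relative to the padding zeros, and record the standard identity $\#\text{valleys}-\#\text{peaks}$ is fixed, together with $\asc+\des=n+k$ and $\asc=\ell$. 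The point is that on a single list of length $i$, the generating function $\sum x^{\asc}y^{\des}$ over the orbit of the modified Foata--Strehl action on that list is $(xy)^{v+1}(x+y)^{i-1-2v}$ where $v$ is the number of valleys of any double-descent-free representative of the orbit — this is exactly Br\"and\'en's argument~\cite{Branden08} applied list by list, remembering that the leading $0$ forces an initial ascent so the exponent bookkeeping matches the classical $A_n(x)=x\sum\gamma(n,\ell)x^\ell(1+x)^{n-1-2\ell}$ pattern.

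Then I would take the product over the $k$ lists of a partition: the modified Foata--Strehl action acts independently on each list, partitioning $\mathcal{S}(n,k)$ into orbits, and each orbit has a unique representative in which every list is double-descent-free. If that representative has $i$ valleys in total across its $k$ lists, then summing $x^{\asc}y^{\des}$ over the orbit gives $(xy)^{k+i}(x+y)^{n+k-2(k+i)}=(xy)^{k+i}(x+y)^{n-k-2i}$ — here the $k$ extra factors of $xy$ beyond $(xy)^i$ come from the $+1$ per list in the single-list formula. Summing over all orbits yields $\sum_{k=1}^n z^k\sum_i \gamma(n,k,k+i)(xy)^{k+i}(x+y)^{n-k-2i}$, which is the last displayed line; comparing with the $\gamma(n,k,\ell)$ of~\eqref{xyG3} via $\ell=k+i$ identifies $\gamma(n,k,k+i)$ with the number of double-descent-free $k$-list partitions of $[n]$ having $i$ valleys.

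The main obstacle I anticipate is purely bookkeeping: making sure the boundary zeros prepended and appended to each list are handled consistently so that the ``$+1$ per list'' shift in the exponent of $xy$ is correct, and that an entry which is the sole element of its list (the word $0\sigma_10$) is counted as a valley in the right way. I would handle this by first treating a single list carefully — verifying the single-list generating function against the known $A_i(x)$ specialization and the small cases in part (A) of Theorem~\ref{Eulerian-fulltree} — and only then taking the product over lists. No new idea beyond Br\"and\'en's action is needed; the work is in the careful statement of the single-list case and the multiplicativity of the action over connected components.
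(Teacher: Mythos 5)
Your proposal is correct and follows exactly the route the paper indicates (and then omits): apply Br\"and\'en's modified Foata--Strehl action independently to each of the $k$ lists of an element of $\mathcal{S}(n,k)$, note that each orbit has a unique double-descent-free representative contributing $(xy)^{v+1}(x+y)^{i-1-2v}$ per list of length $i$ with $v$ valleys, and multiply over lists to get $(xy)^{k+i}(x+y)^{n-k-2i}$. Your bookkeeping of the boundary zeros (each list of length $i$ contributing $i+1$ ascents-plus-descents, hence $n+k$ in total) is also consistent with the paper's conventions, so the proposal fills in precisely the details the authors chose to omit.
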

\section{Box sorting algorithm and Young tableaux}\label{sec05}
Rota~\cite{Rota92} once said ``I will tell you shamelessly what my bottom line is: It is placing
balls into boxes". As discussed before, $D=\frac{\mathrm{d}}{\mathrm{d}x}$ and $c:=c(x)$.
In order to study the powers of $cD$, we shall introduce the box sorting algorithm.

An {\it ordered weak set partition} of $[n]$ is a list of pairwise disjoint subsets (maybe empty) of $[n]$
such that the union of these subsets is $[n]$. These subsets are called the {\it parts} of the partition.
A {\it weak composition} $\alpha$ of an integer $n$, denoted by $\alpha\models n$, with $m$ parts is a way of writing
$n$ as the sum of any sequence $\alpha=(\alpha_1,\alpha_2,\ldots,\alpha_m)$ of nonnegative integers.
Given $\alpha\models n$. The {\it Young weak composition diagram} of $\alpha$, also denoted by $\alpha$,
is the left-justified array of $n$ boxes
with $\alpha_i$ boxes in the $i$-th row. We follow the French convention, which
means that we number the rows from bottom to top, and the columns from left to
right. The box in the $i$-th row and $j$-th column is denoted by the pair $(i,j)$.
A {\it Young weak composition tableau} ($\YWCT$, for short) of $\alpha$ is obtained by placing the integers $\{1,2,\ldots,n\}$ into $n$ boxes of the diagram
such that each of those integers is used exactly once.
We will often identify an ordered weak set partition with the corresponding $\YWCT$.
It should be noted that there may be some empty boxes in $\YWCT$. In the following discussion,
we always put a special column of $n+1$ boxes at the left of $\YWCT$ or $\SYT$, and labelled by $0,1,2,\ldots, n$ from bottom to top, see Figure~\ref{fig3} and Table~\ref{tab:dummy-1} for instances.
\begin{figure}[!ht]
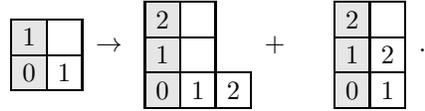
\label{fig3}
\renewcommand{\arraystretch}{2}
\begin{center}
\begin{tabular}{c}
    \begin{ytableau}
    *(gray!20) 1& \\
    *(gray!20) 0 & 1 \\
    \end{ytableau}
\end{tabular}$\rightarrow$
\begin{tabular}{c}
    \begin{ytableau}
    *(gray!20) 2& \\
    *(gray!20) 1& \\
    *(gray!20) 0 & 1&2 \\
    \end{ytableau}\\
\end{tabular}+~~
~~\quad \begin{tabular}{c}
    \begin{ytableau}
    *(gray!20) 2& \\
    *(gray!20) 1& 2\\
    *(gray!20) 0 & 1 \\
    \end{ytableau}
\end{tabular}.
\end{center}
\caption{An illustration of the change of weights: $cc_1\rightarrow c^2c_2+cc_1^2$}\label{fig3}
\end{figure}

The following label schema is fundamental.
\begin{LSp}
Let $p$ be an ordered weak set partition of $[n]$.
We give a labeling of $p$ as follows.
Label the $i$-th subset by the subscript $c_{(i-1)}$,
label a subset with $i$ elements by a superscript $c_i$, where $i\geqslant 1$. Moreover, if the $i$-th subset is empty, we always label it by a superscript $c$.
The {\it weight} of $p$ is defined as the product of the superscript labels.
\end{LSp}

Writting $c_{(0)}=c_{(i)}=c$ and $D_{(i)}=D$ for all $i\in [n]$,
we can rewrite $(cD)^n c$ as follows:
\begin{equation}\label{cdn}
\left(c_{(n)}D_{(n)}\right)\left(c_{(n-1)}D_{(n-1)}\right) \cdots \left(c_{(2)}D_{(2)}\right) \left(c_{(1)}D_{(1)}\right) c_{(0)}.
\end{equation}
A crucial observation is that the differential operator $D_{(i)}$ in~\eqref{cdn} can only applied to $c_{(k)}$, where $0\leqslant k\leqslant i-1$.
When $n=1$, we have $(cD)c=\left(c_{(1)}D_{(1)}\right) c_{(0)}=cc_1$. When $n=2$,
we have
$(cD)^2c=\left(c_{(2)}D_{(2)}\right) \left(c_{(1)}D_{(1)}\right) c_{(0)}=cc_1^2+c^2c_2$.

Next, we introduce the box sorting algorithm, designed to transform a term in the expansion of~\eqref{cdn} into an ordered weak set partition, which can be represented by a $\YWCT$.
When a new term $\left(c_{(i)}D_{(i)}\right)$ is multiplied,
the procedure can be summarized as follows:
\begin{itemize}
  \item When applying $D_{(i)}$ to $c_{(j)}$, it corresponds to the insertion of the element $i$ into the box with the subscript $c_{(j)}$;
  \item Multiplying by $c_{(i)}$ corresponds to the opening of a new empty box $\{\}^{c}_{c_{(i)}}$.
\end{itemize}

We now provide a detailed description of the {\it box sorting algorithm}.
Start with an empty box $(\{\}^c_{c_{(0)}})$. We proceed as follows:
\begin{itemize}
  \item [$BS1$:] When $n=1$, we first insert the element $1$ to the empty box, which corresponds to the operation $D_{(1)}(c_{(0)})$.
We then open a new empty box, which corresponds to the multiplication by $c_{(1)}$. Thus we get $(\{1\}_{c_{(0)}}^{c_1},\{\}^c_{c_{(1)}})$.
  \item [$BS2$:] When $n=2$, we distinguish two cases: $(i)$ we first insert the element $2$ into the first box $\{1\}^{c_1}_{c_{(0)}}$, which corresponds to apply the operation $D_{(2)}$ to  $c_{(0)}$. We then open a new empty box, which corresponds to the multiplication by $c_{(2)}$; $(ii)$
We first insert the element $2$ into the empty box $\{\}^c_{c_{(1)}}$, which corresponds to apply the operation $D_{(2)}$ to $c_{(1)}$. We then open a new empty box, which corresponds to the multiplication by $c_{(2)}$.
Therefore, we get the following correspondences between ordered weak set partitions and their weights:
$$c^2c_2\leftrightarrow (\{1,2\}^{c_2}_{c_{(0)}},\{\}^c_{c_{(1)}},\{\}^c_{c_{(2)}}),~~cc_1^2\leftrightarrow(\{1\}^{c_1}_{c_{(0)}},\{2\}^{c_1}_{c_{(1)}},\{\}^c_{c_{(2)}}).$$
The process from $BS1$ to $BS2$ can be illustrated by Figure~\ref{fig3}.
  \item [$BS3$:] If all of the elements $[i-1]$ have already been inserted, then we consider the insertion of $i$, where $i\geqslant 3$.
Suppose that we insert the element $i$ into the $k$-th box, which has the label $\{\}^{c_\ell}_{c_{(k-1)}}$, where $1\leqslant k\leqslant i$.
Then this insertion corresponds to apply $D_{(i)}$ to $c_{(k-1)}$, and the labels of the $k$-th box become $\{\}^{c_{\ell}+1}_{c_{(k-1)}}$.  We then open a new empty box, which corresponds to the multiplication by $c_{(i)}$. When $i=3$, see Figures~\ref{fig4} and~\ref{fig5} for illustrations, where each empty box in the first column of a $\YWCT$ corresponds to an empty subset.
\end{itemize}

\begin{figure}[!ht]
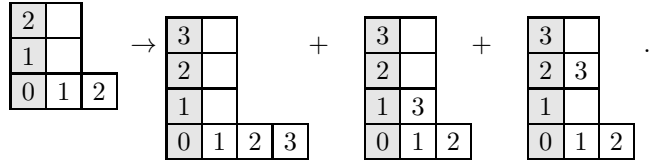
\label{fig4}
\renewcommand{\arraystretch}{2}
\begin{center}
\begin{tabular}{c}
    \begin{ytableau}
    *(gray!20) 2& \\
    *(gray!20) 1& \\
    *(gray!20) 0 & 1&2 \\
    \end{ytableau}\\
\end{tabular}$\rightarrow$
  \begin{ytableau}
    *(gray!20) 3&\\
    *(gray!20) 2&\\
    *(gray!20) 1&\\
    *(gray!20) 0 & 1 & 2 &3  \\
    \end{ytableau}+~~\quad
   \begin{ytableau}
    *(gray!20) 3&\\
    *(gray!20) 2&\\
    *(gray!20) 1 & 3\\
    *(gray!20) 0 & 1 & 2  \\
    \end{ytableau}+~~\quad
    \begin{ytableau}
    *(gray!20) 3&\\
    *(gray!20) 2 & 3\\
    *(gray!20) 1 &\\
    *(gray!20) 0 & 1 & 2  \\
    \end{ytableau}~~.
\end{center}
\caption{The insertion of $3$ into $(\{1,2\}^{c_2}_{c_{(0)}},\{\}^c_{c_{(1)}},\{\}^c_{c_{(2)}})$}\label{fig4}
\end{figure}
\begin{figure}[!ht]
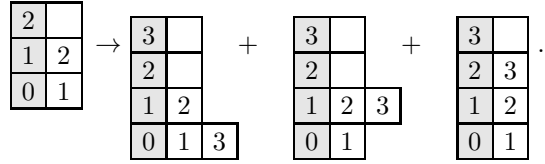
\label{fig5}
\renewcommand{\arraystretch}{2}
\begin{center}
\begin{tabular}{c}
    \begin{ytableau}
    *(gray!20) 2& \\
    *(gray!20) 1& 2\\
    *(gray!20) 0 & 1 \\
    \end{ytableau}\\
\end{tabular}$\rightarrow$
   \begin{ytableau}
    *(gray!20) 3&\\
    *(gray!20) 2&\\
    *(gray!20) 1 & 2\\
    *(gray!20) 0 & 1 & 3  \\
    \end{ytableau}+~~\quad
    \begin{ytableau}
    *(gray!20) 3&\\
    *(gray!20) 2 & \\
    *(gray!20) 1 &2&3\\
    *(gray!20) 0 & 1   \\
    \end{ytableau}+~~\quad
  \begin{ytableau}
    *(gray!20) 3&\\
    *(gray!20) 2&3\\
    *(gray!20) 1&2\\
    *(gray!20) 0 & 1   \\
    \end{ytableau}~~~.
\end{center}
\caption{The insertion of $3$ into $(\{1\}^{c_1}_{c_{(0)}},\{2\}^{c_1}_{c_{(1)}},\{\}^c_{c_{(2)}})$}\label{fig5}
\end{figure}

\begin{definition}\label{defOWP}
Let $\operatorname{OWP}_{n}$
denote the collection of ordered weak set partitions of $[n]$ into $n+1$ blocks $B_0\cup B_1\cup\cdots \cup B_n$ for which the following conditions hold:
$(a)$ $1\in B_0$; $(b)$
if $B_i$ is nonempty, then its minimum larger than $i$, where $1\leqslant i\leqslant n$.
\end{definition}

Given $p\in\operatorname{OWP}_{n}$. It is clear that the $(n+1)$-th block of $p$ must be empty.
Denote by $w_i(p)$ the number of blocks in $p$ with $i$ elements.
The {\it weight function} of $p$ is defined by
\begin{equation}\label{wp}
w(p)=\prod_{i=0}^nc_i^{w_i(p)}.
\end{equation}
By the box sorting algorithm, we immediately get the following result.
\begin{lemma}\label{Lemma-cdnc}
For $n\geqslant 1$, we have $(cD)^nc=\sum_{p\in\operatorname{OWP}_{n}}w(p)$.
\end{lemma}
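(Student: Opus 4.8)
The plan is to prove Lemma~\ref{Lemma-cdnc} by induction on $n$, using the box sorting algorithm to set up a weight-preserving bijection between the terms appearing in the expansion of $(cD)^nc$ and the ordered weak set partitions in $\operatorname{OWP}_n$. The base cases $n=1$ and $n=2$ have already been verified explicitly in the discussion preceding the statement (steps $BS1$ and $BS2$ give $(cD)c = cc_1$ and $(cD)^2c = cc_1^2 + c^2c_2$, matching the weight functions of the two corresponding elements of $\operatorname{OWP}_2$), so these serve as the anchor of the induction.

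For the inductive step, I would argue as follows. Expand $(cD)^{n+1}c = (cD)\bigl((cD)^n c\bigr)$, and assume by induction that $(cD)^n c = \sum_{p \in \operatorname{OWP}_n} w(p)$, where each summand is a monomial $\prod_{i\ge 0} c_i^{w_i(p)}$ recorded as a $\YWCT$ via the label schema. Applying one more factor $cD$ to a monomial $w(p) = \prod_{j} c_{(j)}$ (thinking of the monomial as the ordered product over the $n+1$ boxes, with the $j$-th box labelled by subscript $c_{(j-1)}$ and superscript recording its cardinality) acts by the Leibniz rule: $D$ distributes over the product, hitting each factor $c_{(j)}$ in turn and replacing it by $c_{(j)+1}$, i.e. raising the cardinality superscript of the $j$-th box by one; then multiplication by the new leading $c=c_{(n+1)}$ appends a fresh empty box. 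I would check that this is exactly the operation $BS3$: inserting the new largest element $n+1$ into the $k$-th box (for each admissible $k$, $1\le k\le n+1$) and opening a new empty box $\{\}^{c}_{c_{(n+1)}}$. Each of the $n+1$ terms produced by Leibniz from a single $p$ corresponds bijectively to one of the $n+1$ insertion choices, and the resulting weight $w(p')$ matches because incrementing a cardinality superscript from $c_\ell$ to $c_{\ell+1}$ is precisely the change in the monomial. I would then verify that the output always lies in $\operatorname{OWP}_{n+1}$: condition $(a)$ ($1\in B_0$) is preserved since $1$ is never moved and the box with subscript $c_{(0)}$ is always the first one; condition $(b)$ (a nonempty $B_i$ has minimum $>i$) is preserved because the newly inserted element $n+1$ is larger than every index, and the $(n+1)$-th box is the freshly opened empty one. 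Conversely, every $p' \in \operatorname{OWP}_{n+1}$ arises uniquely this way: deleting the (necessarily empty, by the remark after Definition~\ref{defOWP}) last block and removing the element $n+1$ from whatever block it occupies returns a well-defined element of $\operatorname{OWP}_n$, and condition $(b)$ guarantees that $n+1$ is the largest element so this deletion is unambiguous; the block $n+1$ was removed from records which of the $n+1$ Leibniz terms we came from.

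Combining, I would conclude $(cD)^{n+1}c = \sum_{p\in\operatorname{OWP}_n} \sum_{\text{insertions}} w(p') = \sum_{p'\in\operatorname{OWP}_{n+1}} w(p')$, completing the induction.

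The main obstacle, and the step that requires the most care, is the precise matching between the analytic Leibniz expansion and the combinatorial $BS3$ rule — in particular, being scrupulous about the bookkeeping of subscripts versus superscripts in the label schema, and confirming that the admissibility constraint "$D_{(i)}$ can only be applied to $c_{(k)}$ with $k \le i-1$" noted after~\eqref{cdn} corresponds exactly to the range $1\le k\le i$ of available boxes at the time $i$ is inserted (there are exactly $i$ boxes present, indexed $c_{(0)},\dots,c_{(i-1)}$, after $i-1$ insertions). Everything else is routine verification of the two invariants $(a)$ and $(b)$ and a straightforward inverse map; the bijection itself is essentially forced once the box sorting algorithm is correctly transcribed.
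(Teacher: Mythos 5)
Your proposal is correct and follows essentially the same route as the paper: the paper's proof consists of the single remark that the lemma follows ``by the box sorting algorithm,'' and that algorithm is precisely the inductive correspondence you spell out (each Leibniz term $c_{(j)}\mapsto c_{(j)+1}$ is the insertion of the new largest element into the $j$-th box, and the final multiplication by $c$ opens the fresh empty box). Your added verifications — that conditions $(a)$ and $(b)$ of Definition~\ref{defOWP} are preserved, that the last block of any element of $\operatorname{OWP}_{n+1}$ is forced to be empty, and that deleting it and removing $n+1$ inverts the construction — are exactly the details the paper leaves implicit, and they are all sound.
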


Given $T\in\SYT(n)$. We define $w_i(T)$ to be the number of rows in $T$ with $i$ elements.
Let $\ell(\lambda(T))$ be the number of rows of $T$, where $\lambda$ is the shape of $T$.
Then $\ell(\lambda(T))=\sum_{i=1}^nw_i(T)$ and $n=\sum_{i=1}^niw_i(T)$.
The {\it weight function} of $T$ is defined by
\begin{equation}\label{wT}
w(T)=c^{n+1-\ell(\lambda(T))}\prod_{i=1}^nc_i^{w_i(T)}.
\end{equation}
\renewcommand{\arraystretch}{2}
\begin{center}
\begin{table}[ht!]
  \caption{$\phi^{-1}(T)=\left\{p\in\operatorname{OWP}_{3}| \phi(p)=T\right\}$, where $T\in\SYT(3)$}\label{tab:dummy-1}
  {
\begin{tabular}{ccccc|c}
$T$&&$\stackrel{\phi^{-1}}{\Longrightarrow}$&&$P$&count\\
\hline
    \begin{ytableau}
    *(gray!20) 3\\
    *(gray!20) 2\\
    *(gray!20) 1\\
    *(gray!20) 0 & 1 & 2 &3  \\
    \end{ytableau}
    & $\Longrightarrow$
    &
    \begin{ytableau}
    *(gray!20) 3&\\
    *(gray!20) 2&\\
    *(gray!20) 1&\\
    *(gray!20) 0 & 1 & 2 &3  \\
    \end{ytableau}
    &
    & $\left( \{1, 2, 3 \}, \{ \}, \{ \},\{\} \right)$
    &1\\
\hline
    \begin{ytableau}
    *(gray!20) 3\\
    *(gray!20) 2\\
    *(gray!20) 1 & 2\\
    *(gray!20) 0 & 1 & 3  \\
    \end{ytableau}
    & $\Longrightarrow$
    &
    \begin{ytableau}
    *(gray!20) 3&\\
    *(gray!20) 2&\\
    *(gray!20) 1 & 2\\
    *(gray!20) 0 & 1 & 3  \\
    \end{ytableau}+~~
    \begin{ytableau}
    *(gray!20) 3&\\
    *(gray!20) 2&\\
    *(gray!20) 1 & 2 & 3\\
    *(gray!20) 0 & 1  \\
    \end{ytableau}
    &
    & $( \{1, 3 \}, \{ 2 \}, \{  \},\{\} ), ~ ( \{1 \}, \{ 2, 3 \}, \{  \} ,\{\})$
    &2\\
\hline
    \begin{ytableau}
    *(gray!20) 3\\
    *(gray!20) 2\\
    *(gray!20) 1 & 3\\
    *(gray!20) 0 & 1 & 2  \\
    \end{ytableau}
    & $\Longrightarrow$
    &
    \begin{ytableau}
    *(gray!20) 3&\\
    *(gray!20) 2&\\
    *(gray!20) 1 & 3\\
    *(gray!20) 0 & 1 & 2  \\
    \end{ytableau}+~~
    \begin{ytableau}
    *(gray!20) 3&\\
    *(gray!20) 2 & 3\\
    *(gray!20) 1& \\
    *(gray!20) 0 & 1 & 2  \\
    \end{ytableau}
    &
    & $\left( \{1, 2 \}, \{ 3 \}, \{  \} ,\{\}), ~ ( \{1, 2 \}, \{  \}, \{ 3 \},\{\} \right)$
    &2\\
\hline
    \begin{ytableau}
    *(gray!20) 3\\
    *(gray!20) 2 & 3\\
    *(gray!20) 1 & 2\\
    *(gray!20) 0 & 1\\
    \end{ytableau}
    & $\Longrightarrow$
    &
    \begin{ytableau}
    *(gray!20) 3&\\
    *(gray!20) 2 & 3\\
    *(gray!20) 1 & 2\\
    *(gray!20) 0 & 1\\
    \end{ytableau}
    &
    & $\left( \{1 \}, \{ 2 \}, \{ 3 \},\{\} \right)$
    &1\\
    \hline
\end{tabular}}
\end{table}
\end{center}
Let $\phi$ be the map from $\operatorname{OWP}_{n}$ to $\SYT(n)$, which is described as follows:
\begin{itemize}
  \item [$OS1$:] For $p\in\operatorname{OWP}_{n}$, let $Y$ be the corresponding $\YWCT$.
  Reorder the left-justified rows of $Y$ by their length in decreasing order from the bottom to the top, and delete all empty boxes.
  \item [$OS2$:]Rearrange the entries in each column in ascending order from the bottom to the top.
\end{itemize}
In view of~\eqref{wp} and~\eqref{wT}, we see that for any $p\in\operatorname{OWP}_{n}$, one has $\phi(p)\in\SYT(n)$ and
\begin{equation}\label{wp-SYT}
w(p)=w\left(\phi(p)\right).
\end{equation}

\begin{definition}
Given $T\in\SYT(n)$. Let
$\phi^{-1}(T)=\{p\in\operatorname{OWP}_{n}|~ \phi(p)=T\}$.
We call $\#\phi^{-1}(T)$ the {\it box sorting index} of $T$.
\end{definition}
Clearly, $\#\phi^{-1}(T)=1$ for $T\in \SYT(1)$ or $T\in\SYT(2)$.
The correspondences between $\SYT(3)$ and $\operatorname{OWP}_{3}$ are listed in Table~\ref{tab:dummy-1}.
For $T\in\SYT(n)$, let $T_i$ be the element in $\SYT(i)$ obtained from $T$ by deleting the $n-i$ elements $i+1,i+2,\ldots,n$.
We denote by $\operatorname{col}_k(T_i)$ the size of the $k$-th column of $T_i$.
\begin{theorem}\label{TSYTncol}
For $T\in\SYT(n)$, the box sorting index of $T$ can be computed as follows:
\begin{equation}\label{phiT}
\#\phi^{-1}(T)=\prod_{i=1}^n\sigma_i(T).
\end{equation}
where $\sigma_i(T)$ is defined by
\begin{equation*}
\sigma_i(T)=\left\{
  \begin{array}{ll}
   i-\operatorname{col}_1(T_i)+1, & \hbox{if $i$ is in the first column of $T$;} \\
    \operatorname{col}_{k}(T_i)-\operatorname{col}_{k+1}(T_i)+1, & \hbox{if $i$ is in the $(k+1)$-th column of $T$, where $k\geqslant 1$.}
  \end{array}
\right.
\end{equation*}
We call $\sigma_i(T)$ the box sorting index of the element $i$.
Hence
\begin{equation}\label{cdnc-Thm}
(cD)^nc=\sum_{T\in\operatorname{SYT}{(n)}}\#\phi^{-1}(T)w(T)=\sum_{T\in\operatorname{SYT}{(n)}}\left(\prod_{i=1}^n\sigma_i(T)c_i^{w_i(T)}\right)c^{n+1-\ell(\lambda(T))}.
\end{equation}
\end{theorem}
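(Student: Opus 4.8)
The plan is to prove the product formula~\eqref{phiT} by induction on $n$; the cases $n\le 2$ are immediate since there $\#\phi^{-1}(T)=1$. Everything hinges on one structural remark about the map $\phi$: neither reordering the rows of a $\YWCT$ by length (the first step $OS1$ of $\phi$) nor rearranging the entries within each column ($OS2$) ever moves an entry out of its column, so $\phi(p)$ is completely determined by its column contents. Explicitly, for $p=(B_0,\dots,B_n)\in\operatorname{OWP}_n$ list the elements of each nonempty block in increasing order (this is exactly how the box sorting algorithm fills a row from left to right), and set $S_c(p)=\{\,c\text{-th smallest element of }B_j:\ |B_j|\ge c\,\}$. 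Then the $c$-th column of $\phi(p)$ is the increasing rearrangement of $S_c(p)$; in particular the column in which an element $i$ sits in $\phi(p)$ equals the rank of $i$ inside the block of $p$ containing it.

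For the inductive step, fix $T\in\SYT(n)$ and suppose $n$ lies in column $k+1$ of $T$, with $k=0$ allowed (the first column); put $T'=T_{n-1}$. Deleting $n$ together with the (always empty) last block identifies $\operatorname{OWP}_n$ with the set of pairs $(p',j)$ where $p'=(B'_0,\dots,B'_{n-1})\in\operatorname{OWP}_{n-1}$ and $0\le j\le n-1$, the reverse map being exactly one step of the box sorting algorithm: insert $n$ into $B'_j$ and append an empty block. I claim this restricts to a bijection between $\phi^{-1}(T)$ and $\{(p',j):\ \phi(p')=T',\ |B'_j|=k\}$. Indeed, since $n$ is the global maximum, inserting it into $B'_j$ leaves $S_c$ unchanged for $c\ne|B'_j|+1$ and adjoins $n$ to $S_{|B'_j|+1}$; after $OS2$ the element $n$ ends up at the top of column $|B'_j|+1$. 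On the other hand $T$ is obtained from $T'=T_{n-1}$ by adjoining $n$ at the top of column $k+1$ (the cell of $n$ being an outer corner of $T$, and $n$ being largest). Comparing these two descriptions of the column contents, and using that $\phi(p)$ and $\phi(p')$ are each determined by their column contents, yields $\phi(p)=T$ if and only if $\phi(p')=T'$ and $|B'_j|=k$.

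It remains to count, for a fixed $p'$ with $\phi(p')=T'$, the admissible values of $j$, i.e.\ the number of blocks of $p'$ of size exactly $k$. This equals the number of rows of $\phi(p')=T'$ of length $\ge k$ minus the number of those of length $\ge k+1$, which is $\operatorname{col}_k(T')-\operatorname{col}_{k+1}(T')$ when $k\ge 1$, and $n-\operatorname{col}_1(T')$ when $k=0$ (the number of empty blocks among $B'_0,\dots,B'_{n-1}$). A short bookkeeping step — using $T_n=T$ together with $\operatorname{col}_{k+1}(T)=\operatorname{col}_{k+1}(T')+1$ and $\operatorname{col}_c(T)=\operatorname{col}_c(T')$ for $c\ne k+1$ — identifies this number with $\sigma_n(T)$ in both cases. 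Hence $\#\phi^{-1}(T)=\sigma_n(T)\,\#\phi^{-1}(T')$, and since $\sigma_i(T)$ depends only on $T_i$ we have $\sigma_i(T_{n-1})=\sigma_i(T)$ for $i\le n-1$; the induction hypothesis then gives~\eqref{phiT}. Finally, the expansion~\eqref{cdnc-Thm} follows by combining Lemma~\ref{Lemma-cdnc}, which gives $(cD)^nc=\sum_{p\in\operatorname{OWP}_n}w(p)$, with the weight identity~\eqref{wp-SYT}: grouping the sum over the fibres of $\phi$ and substituting the definition~\eqref{wT} of $w(T)$ produces precisely the right-hand side.

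The main obstacle is the structural remark in the first paragraph, and its careful use in the inductive step: one must verify that adjoining $n$ to a block of $p'$ really does correspond to adjoining $n$ at the top of one specific column of the output $\SYT$, and that $\phi$ is faithful enough on column contents for the ``if and only if'' to hold. Once this is in place the rest is routine manipulation of column sizes, with the first-column case $k=0$ needing only minor separate attention.
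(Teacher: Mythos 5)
Your proof is correct and is essentially the paper's argument in reverse: the paper builds up from $i=1$ to $n$, counting for each $i$ the number of admissible insertion positions (which depends only on $T_i$) and invoking the multiplication principle, while you peel off the largest entry and induct on $n$; the counts $i-\operatorname{col}_1(T_i)+1$ and $\operatorname{col}_k(T_i)-\operatorname{col}_{k+1}(T_i)+1$ are identical in both. Your structural remark that $\phi(p)$ is determined by column contents supplies a justification the paper leaves implicit, but it does not change the underlying approach.
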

\begin{proof}
In order to prove~\eqref{phiT}, we need to count the possible positions of each entry $i$ of $p\in\phi^{-1}(T)$. We distinguish two cases:
\begin{itemize}
  \item [$(i)$] Suppose that $i$ is the $r$-th entry in the first column of $T$. Then $r=\operatorname{col}_1(T_i)$. In $T_i$, the entry $i$ is maximum. By the box sorting algorithm, we see that there are $i-(r-1)$ ways to insert $i$, and each insertion generates an element of $\operatorname{OWP}_i$, see Table~\ref{tab:dummy-1} for illustrations.
  \item [$(ii)$] Suppose that $i$ is the $r$-th entry in the $(k+1)$-th column of $T$, where $k\geqslant 1$. Then $r=\operatorname{col}_{k+1}(T_i)$. In $T_i$, the entry $i$ is maximum.
 By box sorting algorithm, we find that there are $\operatorname{col}_k(T_i)-(r-1)$ ways to insert the entry $i$, and each insertion generates an element of $\operatorname{OWP}_i$.
\end{itemize}
Continuing in this way, we eventually recover all the elements in $\operatorname{OWP}_n$.
By the multiplication principle, we get~\eqref{phiT}.
Combining~\eqref{wp-SYT},~\eqref{phiT} and Lemma~\ref{Lemma-cdnc}, we arrive at~\eqref{cdnc-Thm}, and hence the proof is complete.
\end{proof}
In Table~\ref{tab:dummy-03}, one can see the box sorting indices of standard Young tableaux in $\SYT(3)$.
\section{Applications}\label{sec06}
Combing Theorem~\ref{TSYTncol} and context-free grammars, we shall give a unified interpretations of
ten kinds of well studied polynomials, including the Ramanujan polynomials.
\subsection{Ramanujan polynomials}
\hspace*{\parindent}

The {\it Ramanujan polynomials} can be defined by
$$R_{n+1}(x)=n(1+x)R_n(x)+x^2\frac{\mathrm{d}}{\mathrm{d}x}R_n(x),~R_1(x)=1.$$
In~\cite{Zeng99}, Zeng observed the connection between the
Ramanujan polynomials and Shor's interpretation of the Cayley's formula for counting labelled trees.
Since then, there has been much work on the generalizations of Ramanujan polynomials.
Recently, Sokal~\cite{Sokal23} found the Hankel matrix $(R_{i+j}(x))_{i,j\geqslant 0}$ is coefficientwise totally positive.
It is well known that Lambert's equation $we^{-w}=y$ has an explicit solution $$w=\sum_{n\geqslant 1}n^{n-1}\frac{y^n}{n!}.$$
Differentiating $n$ times Lambert's function $w$ with respect to $y$ yields
$$w^{(n)}=\frac{\mathrm{e}^{nw}}{(1-w)^n}R_n\left(\frac{1}{1-w}\right).$$

For a rooted tree $T$ and a vertex $v$ of $T$, let $T_v$ be the subtree of $T$ rooted at $v$.
An edge $(u,v)$ of $T$, with $v$ being the child of $u$, is called an
{\it improper edge} if there exists a vertex in $T_v$ that is smaller than $u$.
Let $r(n,k)$ denotes the number of rooted trees on $[n]$ with $k$ improper edges.
Utilizing Shor's recursive procedure to construct rooted trees,
Dumont-Ramamonjisoa~\cite{Dumont9602} discovered the following result:
\begin{equation}\label{D14xy}
D_{G}^n(xy)=(xy)^{n+1}R_{n+1}(x)=(xy)^{n+1}\sum_{k=0}^nr(n+1,k)x^k,
\end{equation}
where $G=\{x\rightarrow x^3y,~y\rightarrow xy^2\}$.
The reader is referred to Chen-Yang~\cite{ChenYang21} for some recent generalizations of the Ramanujan polynomials.
In particular, they gave a proof of~\eqref{D14xy} in the language of a grammatical labeling of rooted trees.

We can now present the following result.
\begin{theorem}\label{Rnx-thm}
Let $R_n(x)$ be the Ramanujan polynomials. Then we have
$$R_{n+1}(x)=\sum_{T\in\operatorname{SYT}{(n)}}\prod_{i=1}^n\sigma_i(T)\left(\sum_{j=0}^i\binom{i}{j}j!x^{j}\right)^{w_{i}(T)},$$
where $w_i(T)$ is the number of rows in $T$ with $i$ elements and $\sigma_i(T)$ is the box sorting index of $i$.
\end{theorem}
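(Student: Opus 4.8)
The plan is to obtain the stated formula by specializing Theorem~\ref{TSYTncol} to the smooth function that encodes the Ramanujan grammar $G=\{x\rightarrow x^3y,\ y\rightarrow xy^2\}$ through Lambert's function, and then reading off the coefficients.

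Recall from the discussion preceding the theorem that Lambert's function $w=w(y)$, defined by $we^{-w}=y$, satisfies $w^{(n)}=\frac{e^{nw}}{(1-w)^n}R_n\!\left(\frac{1}{1-w}\right)$, where $w^{(k)}$ is the $k$-th derivative of $w$ with respect to $y$. Differentiating $we^{-w}=y$ gives $w'(1-w)e^{-w}=1$, that is $w'=\frac{e^{w}}{1-w}$. I would then set
\[
c:=c(w)=\frac{e^{w}}{1-w},\qquad D:=\frac{\mathrm{d}}{\mathrm{d}w},
\]
so that, after the change of variable $y\mapsto w$, the operator $\frac{\mathrm{d}}{\mathrm{d}y}=w'\frac{\mathrm{d}}{\mathrm{d}w}$ becomes $cD$. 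Since $w^{(1)}=w'=c$, iterating gives $w^{(n+1)}=(cD)^{n}c$. Writing $x=\frac{1}{1-w}$, so that $c^{\,n+1}=\frac{e^{(n+1)w}}{(1-w)^{n+1}}$, the recalled formula yields the key identity
\[
(cD)^{n}c=c^{\,n+1}R_{n+1}(x).
\]
(Equivalently, one may substitute $x=\frac{1}{1-w}$, $y=e^{w}$ into the grammar relation~\eqref{D14xy}, under which $xy=c$ and $D_G$ becomes $cD$.)

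Next I would apply Theorem~\ref{TSYTncol} with its indeterminate renamed $w$ and with $c$ as above, which gives
\[
(cD)^{n}c=\sum_{T\in\operatorname{SYT}(n)}\left(\prod_{i=1}^{n}\sigma_i(T)\,c_i^{\,w_i(T)}\right)c^{\,n+1-\ell(\lambda(T))},\qquad c_i=D^ic.
\]
A Leibniz-rule computation on $c=e^{w}\cdot\frac{1}{1-w}$, using $D^{j}\frac{1}{1-w}=\frac{j!}{(1-w)^{j+1}}$, gives
\[
c_i=\sum_{j=0}^{i}\binom{i}{j}e^{w}\frac{j!}{(1-w)^{j+1}}=c\sum_{j=0}^{i}\binom{i}{j}j!\,x^{j}=c\,P_i(x),\qquad P_i(x):=\sum_{j=0}^{i}\binom{i}{j}j!\,x^{j}.
\]
Substituting $c_i=c\,P_i(x)$ and using $\sum_{i=1}^{n}w_i(T)=\ell(\lambda(T))$ to gather the powers of $c$, each summand picks up exactly the factor $c^{\ell(\lambda(T))}\cdot c^{\,n+1-\ell(\lambda(T))}=c^{\,n+1}$, whence
\[
(cD)^{n}c=c^{\,n+1}\sum_{T\in\operatorname{SYT}(n)}\prod_{i=1}^{n}\sigma_i(T)\,P_i(x)^{\,w_i(T)}.
\]
Comparing with the key identity and cancelling the nonzero factor $c^{\,n+1}$ produces the claimed expansion of $R_{n+1}(x)$.

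The only genuinely delicate point is applying Theorem~\ref{TSYTncol} to $c(w)=e^{w}/(1-w)$, which is singular at $w=1$: one restricts to a neighbourhood of $w=0$ (equivalently of $y=0$), where $w(y)$ is analytic and $c$ is smooth, so the theorem applies verbatim; since both sides of the resulting identity are polynomials in $x=1/(1-w)$ while $x$ ranges over an interval there, the identity holds identically in $x$. The remaining work — the Leibniz expansion of $c_i$ and the exponent count $\sum_i w_i(T)=\ell(\lambda(T))$ that makes the powers of $c$ cancel — is routine bookkeeping.
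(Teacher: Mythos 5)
Your proposal is correct and follows essentially the same route as the paper: both proofs specialize Theorem~\ref{TSYTncol} to a function $c$ whose higher derivatives satisfy $c_i=c\sum_{j=0}^{i}\binom{i}{j}j!\,x^{j}$, then use $\sum_i w_i(T)=\ell(\lambda(T))$ to strip off the common factor $c^{n+1}$. The only (cosmetic) difference is that you source the key identity $(cD)^nc=c^{n+1}R_{n+1}(x)$ analytically from the Lambert-function formula for $w^{(n)}$, whereas the paper rewrites the Dumont--Ramamonjisoa grammar identity~\eqref{D14xy} as $(xyD_{G'})^n(xy)$ with $G'=\{x\rightarrow x^{2},\ y\rightarrow y\}$ and computes $c_i=D_{G'}^{i}(xy)$ formally --- an equivalence you yourself note parenthetically.
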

\begin{proof}
Note that~\eqref{D14xy} can be rewritten as
$$(xyD_{G'})^n(xy)=(xy)^{n+1}R_{n+1}(x),$$
where $G'=\{x\rightarrow x^2,~y\rightarrow y\}$.
Setting $c=xy$, it is routine to check that
$$c_{i}=D_{G'}^{i}(c)=D_{G'}^{i}(xy)=xy\sum_{j=0}^i\binom{i}{j}j!x^j~{\text {for $i\geqslant 0$}}.$$
Recall that $\ell(\lambda(T))=\sum_{i\geqslant 0}w_{i}(T)$.
By~\eqref{cdnc-Thm}, we find that
\begin{align*}
x^{n+1}R_{n+1}(x)&=\sum_{T\in\operatorname{SYT}{(n)}}\left(\prod_{i=1}^n\sigma_i(T)c_i^{w_i(T)}\right)c^{n+1-\ell(\lambda(T))}{\bigg|}_{c=xy,~c_{i}=xy\sum_{j=0}^i\binom{i}{j}j!x^j,~y=1}.
\end{align*}
Thus
\begin{align*}
x^{n+1}R_{n+1}(x)&=\sum_{T\in\operatorname{SYT}{(n)}}\prod_{i=1}^n\sigma_i(T)x^{w_{i}(T)}\left(\sum_{j=0}^i\binom{i}{j}j!x^{j}\right)^{w_{i}(T)}x^{n+1-\ell(\lambda(T))}\\
&=\sum_{T\in\operatorname{SYT}{(n)}}x^{\sum_{i\geqslant 1}w_{i}(T)}\prod_{i=1}^n\sigma_i(T)\left(\sum_{j=0}^i\binom{i}{j}j!x^{j}\right)^{w_{i}(T)}x^{n+1-\ell(\lambda(T))},
\end{align*}
which yields the desired formula. This completes the proof.
\end{proof}
Setting
$\alpha_i=\sum_{j=0}^i\binom{i}{j}j!x^{j}$,
we see that $\alpha_1=1+x,~\alpha_2=1+2x+2x^2,~\alpha_3=1+3x+6x^2+6x^3$.
An illustration of Theorem~\ref{Rnx-thm} is given by Table~\ref{tab:dummy-03}.
\renewcommand{\arraystretch}{2}
\begin{center}
\begin{table}[ht!]
  \caption{The computation of $R_4(x)=6 + 18 x + 25 x^2 + 15 x^3$.}\label{tab:dummy-03}
  {
\begin{tabular}{ccccc|c}
$T$&&${\sigma_i(T)},~w_i(T)$&&&enumerator\\
\hline
    \begin{ytableau}
    *(gray!20) 3\\
    *(gray!20) 2\\
    *(gray!20) 1\\
    *(gray!20) 0 & 1 & 2 &3  \\
    \end{ytableau}
    & $\Longrightarrow$
    &
   $\substack{\sigma_1(T)=\sigma_2(T)=\sigma_3(T)=1\\ w_1(T)=w_2(T)=0,~w_3(T)=1}$
    & &
    &$1+3x+6x^2+6x^3$\\
\hline
    \begin{ytableau}
    *(gray!20) 3\\
    *(gray!20) 2\\
    *(gray!20) 1 & 2\\
    *(gray!20) 0 & 1 & 3  \\
    \end{ytableau}
    & $\Longrightarrow$
    &
  $\substack{\sigma_1(T)=\sigma_2(T)=1,~\sigma_3(T)=2\\w_1(T)=w_2(T)=1,~w_3(T)=0}$
    &
    &
    &$2(1+x)(1+2x+2x^2)$\\
\hline
    \begin{ytableau}
    *(gray!20) 3\\
    *(gray!20) 2\\
    *(gray!20) 1 & 3\\
    *(gray!20) 0 & 1 & 2  \\
    \end{ytableau}
    & $\Longrightarrow$
    &
   $\substack{\sigma_1(T)=\sigma_2(T)=1,~\sigma_3(T)=2\\w_1(T)=w_2(T)=1,~w_3(T)=0}$
    &
    &
    &$2(1+x)(1+2x+2x^2)$\\
\hline
    \begin{ytableau}
    *(gray!20) 3\\
    *(gray!20) 2 & 3\\
    *(gray!20) 1 & 2\\
    *(gray!20) 0 & 1\\
    \end{ytableau}
    & $\Longrightarrow$
    &
  $\substack{\sigma_1(T)=\sigma_2(T)=\sigma_3(T)=1\\w_1(T)=3,~w_2(T)=w_3(T)=0}$
    &
    &
    &$(1+x)^3$\\
    \hline
\end{tabular}}
\end{table}
\end{center}
\subsection{Andr\'e polynomials}
\hspace*{\parindent}

An increasing tree on $[n]$ is a rooted tree with vertex set $\{0,1,2,\ldots,n\}$ in which the labels
of the vertices are increasing along any path from the root $0$.
A {\it 0-1-2 increasing tree} is an increasing tree in which the degree of any vertex is
at most two.  Given a 0-1-2 increasing tree $T$, let $\ell(T)$ denote the number
of leaves of $T$, and $u(T)$ denote the number of vertices of $T$ with degree $1$. The
{\it Andr\'e polynomials} are defined by
$$E_n(x,y)=\sum_{T\in \men}x^{\ell(T)}y^{u(T)},$$
where $\men$ is the set of 0-1-2 increasing trees on $[n-1]$.
Here are the first few values:
\begin{equation*}
E_1(x,y)=x,~
E_2(x,y)=xy,~
E_3(x,y)=xy^2+x^2,~
E_4(x,y)=xy^3+4x^2y.
\end{equation*}

Using a grammatical labeling of 0-1-2 increasing trees,
Dumont~\cite{Dumont96} discovered that
\begin{equation}\label{DGNy}
D_G^n(y)=E_n(x,y).
\end{equation}
where $G=\{x\rightarrow xy,~y\rightarrow x\}$.
Define $$E(x,y;z)=y+\sum_{n\geqslant 1}E_n(x,y)\frac{z^n}{n!}.$$
The explicit formula of $E(x,y;z)$ was first obtained by Foata-Sch\"utzenberger~\cite{Foata73} by using a differential equation.
Dumont~\cite{Dumont96} found that $E(1,1;z)=\tan z+ \sec z$. Thus $E_n(1,1)$ is the $n$-th Euler number,
which counts alternating permutations in $\msn$, see~\cite{Stanley10} for a survey on this topic.
In~\cite{Foata01}, Foata-Han deduced the explicit formula for $E(x,1;z)$ via an identity.
In~\cite{Chen17}, Chen-Fu used the grammar $G$ to derive the explicit formula of $E(x,y;z)$.
\renewcommand{\arraystretch}{2}
\begin{center}
\begin{table}[ht!]
  \caption{The computation of $E_4(x,y)=xy^3+4x^2y$.}\label{tab:dummy-05}
  {
\begin{tabular}{ccccc|c}
$T\in\SYT(3;2)$&&${\sigma_i(T)},~w_i(T)$&&&enumerator\\
\hline
    \begin{ytableau}
    *(gray!20) 3\\
    *(gray!20) 2\\
    *(gray!20) 1 & 2\\
    *(gray!20) 0 & 1 & 3  \\
    \end{ytableau}
    &
    &
  $\substack{\sigma_1(T)=\sigma_2(T)=1,~\sigma_3(T)=2\\w_1(T)=w_2(T)=1,~w_3(T)=0}$
    &
    &
    &$2x^2y$\\
\hline
    \begin{ytableau}
    *(gray!20) 3\\
    *(gray!20) 2\\
    *(gray!20) 1 & 3\\
    *(gray!20) 0 & 1 & 2  \\
    \end{ytableau}
    &
    &
   $\substack{\sigma_1(T)=\sigma_2(T)=1,~\sigma_3(T)=2\\w_1(T)=w_2(T)=1,~w_3(T)=0}$
    &
    &
    &$2x^2y$\\
\hline
    \begin{ytableau}
    *(gray!20) 3\\
    *(gray!20) 2 & 3\\
    *(gray!20) 1 & 2\\
    *(gray!20) 0 & 1\\
    \end{ytableau}
    &
    &
  $\substack{\sigma_1(T)=\sigma_2(T)=\sigma_3(T)=1\\w_1(T)=3,~w_2(T)=w_3(T)=0}$
    &
    &
    &$xy^3$\\
    \hline
\end{tabular}}
\end{table}
\end{center}
\begin{definition}
Let $\SYT(n;k)$ be the subset of $\SYT(n)$ with at most $k$ columns.
\end{definition}

We can now present an expression of Andr\'e polynomials, see Table~\ref{tab:dummy-05} for an illustration.
\begin{theorem}\label{thmEn}
We have
$$E_{n+1}(x,y)=\sum_{T\in\operatorname{SYT}{(n;2)}}\left(\prod_{i=1}^n\sigma_i(T)\right)x^{n+1-w_1(T)-w_2(T)}y^{w_1(T)},$$
where $w_i(T)$ is the number of rows in $T$ with $i$ elements and $\sigma_i(T)$ is the box sorting index of $i$.
\end{theorem}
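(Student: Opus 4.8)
The plan is to imitate the proof of Theorem~\ref{Rnx-thm}: rewrite Dumont's grammar~\eqref{DGNy} for the Andr\'e polynomials as a power of an operator of the shape $cD_{G'}$, and then feed it into Theorem~\ref{TSYTncol}.

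First I would ``pull the letter $x$ out'' of the grammar $G=\{x\rightarrow xy,~y\rightarrow x\}$. Put $G'=\{x\rightarrow y,~y\rightarrow 1\}$. Both $D_G$ and $xD_{G'}$ are derivations (multiplication by a fixed element sends a derivation to a derivation), and they agree on the two generators because $D_G(x)=xy=x\cdot D_{G'}(x)$ and $D_G(y)=x=x\cdot D_{G'}(y)$; since a derivation is determined by its values on generators, $D_G=xD_{G'}$ as operators on polynomials in $x,y$. Combining this with~\eqref{DGNy} and $D_G(y)=x$ gives
$$E_{n+1}(x,y)=D_G^{n+1}(y)=D_G^n(x)=(xD_{G'})^n(x).$$

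Next I would specialize Theorem~\ref{TSYTncol} with $c=x$. Iterating $G'$ yields $c_0=x$, $c_1=D_{G'}(x)=y$, $c_2=D_{G'}(y)=1$, and $c_i=0$ for all $i\geqslant 3$. In the expansion~\eqref{cdnc-Thm}, a tableau $T$ with $w_i(T)>0$ for some $i\geqslant 3$ contributes the vanishing factor $c_i^{w_i(T)}=0$, so only $T\in\SYT(n;2)$ survive. For such a $T$ one has $\prod_{i\geqslant 1}c_i^{w_i(T)}=y^{w_1(T)}$ and $\ell(\lambda(T))=w_1(T)+w_2(T)$, whence $c^{\,n+1-\ell(\lambda(T))}=x^{\,n+1-w_1(T)-w_2(T)}$. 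Substituting these into~\eqref{cdnc-Thm} produces exactly the claimed expansion of $E_{n+1}(x,y)$.

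I do not expect a genuine obstacle. The only real idea is choosing the auxiliary grammar $G'$ correctly and keeping track of the index shift $E_n\mapsto E_{n+1}$; once these are in place the argument is a mechanical substitution. The two points deserving a little care are the identity $D_G=xD_{G'}$ (resting only on the fact that derivations are determined on generators) and the bookkeeping identity $\ell(\lambda(T))=w_1(T)+w_2(T)$, valid precisely on the two-column tableaux that survive the specialization $c_i=0$ for $i\geqslant 3$. Table~\ref{tab:dummy-05} provides a consistency check in the case $n=3$.
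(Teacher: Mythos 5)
Your proposal is correct and follows essentially the same route as the paper: the paper's proof also introduces $G'=\{x\rightarrow y,\ y\rightarrow 1\}$, rewrites $D_G^{n+1}(y)$ as $(xD_{G'})^n(x)=E_{n+1}(x,y)$, and then specializes~\eqref{cdnc-Thm} with $c=x$, $c_1=y$, $c_2=1$, $c_i=0$ for $i\geqslant 3$, so that only tableaux in $\SYT(n;2)$ survive and $\ell(\lambda(T))=w_1(T)+w_2(T)$. Your explicit justification that $D_G=xD_{G'}$ via agreement on generators is merely a more careful spelling-out of a step the paper leaves implicit.
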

\begin{proof}
Let $G'=\{x\rightarrow y,~y\rightarrow 1\}$. We find that~\eqref{DGNy} can be rewritten as follows:
\begin{equation}\label{En1xy}
\left(xD_{G'}\right)^{n+1}(y)=\left(xD_{G'}\right)^{n}\left(xD_{G'}(y)\right)=\left(xD_{G'}\right)^{n}(x)=E_{n+1}(x,y).
\end{equation}
Setting $c=x$, we get $c_1=D_{G'}(c)=D_{G'}(x)=y$, $c_2=D_{G'}^2(c)=D_{G'}(y)=1$ and $c_i=0$ for all $i\geqslant 3$.
By~\eqref{cdnc-Thm}, we find that
\begin{align*}
E_{n+1}(x,y)&=\sum_{T\in\operatorname{SYT}{(n)}}\left(\prod_{i=1}^n\sigma_i(T)c_i^{w_i(T)}\right)c^{n+1-\ell(\lambda(T))}{\bigg|}_{c=x,~c_1=y,~c_2=1,~c_{i}=0~{\text{for $i\geqslant 3$}}}\\
&=\sum_{T\in\operatorname{SYT}{(n;2)}}\prod_{i=1}^n\sigma_i(T)x^{n+1-\ell(\lambda(T))}y^{w_1(T)},
\end{align*}
as desired. This completes the proof.
\end{proof}
\subsection{Interior peak and left peak polynomials}
\hspace*{\parindent}

Recently, there has been much work devoted to the combinatorial and algebraic properties of the interior peak and left peak polynomials,  see~\cite{Chen23,Gessel20,Ma23,Petersen06,Petersen15}.
Define
$$W_n(x)=\sum_{\pi\in\msn}x^{\ipk(\pi)},~L_n(x)=\sum_{\pi\in\msn}x^{\lpk(\pi)},$$
where
\begin{align*}
\ipk(\pi)&=\#\{i\in[2,n-1]:~\pi(i-1)<\pi(i)>\pi(i+1)\},\\
\lpk(\pi)&=\#\{i\in[n-1]: \pi(i-1)<\pi(i)>\pi(i+1),~\pi(0)=0\}.
\end{align*}
They satisfy the following recursions:
\begin{align*}
W_{n+1}(x)&=(nx-x+2)W_n(x)+2x(1-x)\frac{\mathrm{d}}{\mathrm{d}x}W_n(x),~W_0(x)=W_1(x)=1;\\
L_{n+1}(x)&=(nx+1)L_n(x)+2x(1-x)\frac{\mathrm{d}}{\mathrm{d}x}L_n(x),~L_0(x)=L_1(x)=1.
\end{align*}

The grammar for the interior peak and left peak polynomials was discovered in~\cite{Ma121} via the differential systems of tangent and secant functions.
Chen-Fu~\cite{Chen17} independently found the same grammar via a grammatical labeling.
In~\cite{Chen23}, they demonstrate the efficiency of the grammatical calculus in
the study of exponential generating functions of the interior peak and left peak polynomials.
Here is concerned grammar:
$G=\{x\rightarrow xy,~y\rightarrow x^2\}$.
The grammatical descriptions of $W_n(x)$ and $L_n(x)$ are respectively given as follows:
\begin{equation}\label{DG12}
D_G^n(y)=x^2y^{n-1}W_n\left(\frac{x^2}{y^2}\right),~
D_{G}^n(x)=xy^{n}L_n\left(\frac{x^2}{y^2}\right)~{\text{for $n\geqslant 1$}}.
\end{equation}
By Leibniz rule, one has $$D_G^{n+1}(y)=D_G^n(x^2)=\sum_{k=0}^n\binom{n}{k}D_G^k(x)D_G^{n-1}(x).$$
Set $L_0(x)=1$.
It follows that
\begin{equation*}\label{In}
W_{n+1}(x)=\sum_{k=0}^n\binom{n}{k}L_k(x)L_{n-k}(x).
\end{equation*}
\begin{theorem}\label{Lnx-thm}
Let $L_n(x)$ be the left peak polynomials. Then we have
$$L_n(x)=\sum_{T\in\operatorname{SYT}{(n)}}\left(\prod_{i=1}^n\sigma_i(T)\right)x^{\frac{1}{2}\left(n-\sum_{i\geqslant 0}w_{2i+1}(T)\right)}.$$
\end{theorem}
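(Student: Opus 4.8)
The plan is to follow the same route as in the proofs of Theorems~\ref{Rnx-thm} and~\ref{thmEn}: rewrite the known grammar for $L_n(x)$ in normal ordered form, apply the expansion~\eqref{cdnc-Thm} of Theorem~\ref{TSYTncol}, and then read off the exponents. First I would observe that the grammar $G=\{x\rightarrow xy,~y\rightarrow x^2\}$ factors through $G'=\{x\rightarrow y,~y\rightarrow x\}$ in the sense that $D_G=xD_{G'}$ as operators on $\mathbb{Q}[x,y]$; this is verified on the generators $x$ and $y$ (both sides send $x\mapsto xy$ and $y\mapsto x^2$) and extended by linearity. Hence the second identity in~\eqref{DG12} becomes
$$\left(xD_{G'}\right)^n(x)=xy^nL_n\!\left(\frac{x^2}{y^2}\right),\qquad n\geqslant 1.$$

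Next I would set $c:=x$ and compute the iterated derivatives with respect to $G'$: since $D_{G'}(x)=y$ and $D_{G'}(y)=x$, one gets $c_{2i}=x$ and $c_{2i+1}=y$ for all $i\geqslant 0$, so the sequence $(c_i)_{i\geqslant 0}$ is $2$-periodic. Substituting this into~\eqref{cdnc-Thm} and writing $m(T)=\sum_{i\geqslant 0}w_{2i+1}(T)$ for the number of odd-length rows of $T$, the factor $\prod_{i=1}^nc_i^{w_i(T)}$ contributes $y^{m(T)}$ from the odd-length rows and the power $x^{\sum_{i\ \mathrm{even}}w_i(T)}$ from the even-length rows; together with the factor $c^{\,n+1-\ell(\lambda(T))}=x^{\,n+1-\ell(\lambda(T))}$ and the identity $\ell(\lambda(T))=\sum_{i\geqslant 1}w_i(T)$, the total power of $x$ collapses to $n+1-m(T)$. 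Thus
$$\left(xD_{G'}\right)^n(x)=\sum_{T\in\operatorname{SYT}(n)}\left(\prod_{i=1}^n\sigma_i(T)\right)x^{\,n+1-m(T)}y^{\,m(T)}.$$

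Comparing the two displays, cancelling the common factor $x$, and then specializing $y=1$ yields $L_n(x^2)=\sum_{T}\bigl(\prod_i\sigma_i(T)\bigr)x^{\,n-m(T)}$; replacing $x^2$ by $x$ then gives the claimed formula with exponent $\tfrac12\bigl(n-\sum_{i\geqslant 0}w_{2i+1}(T)\bigr)$. I expect the only delicate point to be this final step of exponent bookkeeping: one must check that $n\equiv m(T)\pmod 2$ for every $T\in\operatorname{SYT}(n)$ so that the substitution $x^2\mapsto x$ is meaningful, which follows from $n=\sum_{i\geqslant 1}iw_i(T)$ together with the fact that $\sum_{i\ \mathrm{even}}iw_i(T)$ is even. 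Everything else is a direct invocation of Theorem~\ref{TSYTncol} combined with the change-of-grammar trick already used for the Ramanujan and Andr\'e polynomials in this section.
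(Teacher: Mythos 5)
Your proposal is correct and follows essentially the same route as the paper: rewrite the grammar $G=\{x\rightarrow xy,\ y\rightarrow x^2\}$ as $xD_{G'}$ with $G'=\{x\rightarrow y,\ y\rightarrow x\}$, set $c=x$ so that $c_{2i}=x$ and $c_{2i+1}=y$, apply~\eqref{cdnc-Thm}, and use $\ell(\lambda(T))=\sum_{i\geqslant 0}w_{2i+1}(T)+\sum_{i\geqslant 1}w_{2i}(T)$ to collapse the exponent to $n+1-m(T)$. Your explicit check that $n\equiv\sum_{i\geqslant 0}w_{2i+1}(T)\pmod 2$ is a welcome detail the paper leaves implicit.
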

\begin{proof}
Let $G'=\{x\rightarrow y,~y\rightarrow x\}$. Note that the second expression in~\eqref{DG12} can be rewritten as
$$(xD_{G'})^n(x)=xy^{n}L_n\left(\frac{x^2}{y^2}\right).$$
Setting $c=x$, we obtain $c_{2i}=D_{G'}^{2i}(x)=x$ and $c_{2i+1}=D_{G'}^{2i+1}(x)=y$ for any $i\geqslant 0$.
By~\eqref{cdnc-Thm}, we find that
\begin{align*}
xL_n(x^2)&=\sum_{T\in\operatorname{SYT}{(n)}}\left(\prod_{i=1}^n\sigma_i(T)c_i^{w_i(T)}\right)c^{n+1-\ell(\lambda(T))}{\bigg|}_{\substack{c=x,~c_{2i}=x,\\~c_{2i+1}=y,~y=1}}\\
&=\sum_{T\in\operatorname{SYT}{(n)}}\left(\prod_{i=1}^n\sigma_i(T)c_i^{w_i(T)}\right)x^{n+1-\ell(\lambda(T))}{\bigg|}_{c_{2i}=x,~c_{2i+1}=1}\\
&=\sum_{T\in\operatorname{SYT}{(n)}}\left(\prod_{i=1}^n\sigma_i(T)\right)\left(\prod_{i=1}^{\lrf{n/2}}x^{w_{2i}(T)}\right)x^{n+1-\ell(\lambda(T))}.
\end{align*}
The desired formula follows from the fact that $\ell(\lambda(T))=\sum_{i\geqslant 0}w_{2i+1}(T)+\sum_{i\geqslant 0}w_{2i}(T)$.
\end{proof}

In~\cite[Section~2.4]{Dumont96}, Dumont derived the following grammar for increasing binary trees:
$$G=\{x\rightarrow 2xy,~y\rightarrow x\},$$
which is an analogous grammar for the Andr\'e polynomials (see~\cite{Chen2301}). Dumont noted that
\begin{equation}\label{Wnx}
D_G^n(y)=xy^{n-1}W_n\left(\frac{x}{y^2}\right),
\end{equation}
for $n\geqslant 1$, where $G=\{x\rightarrow 2xy,~y\rightarrow x\}$.
We can now give the following result. An illustration of it is given by Table~\ref{tab:dummy-ipk}.
\begin{theorem}
Let $W_n(x)$ be the interior peak polynomials. Then
$$W_{n+1}(x)=\sum_{T\in\operatorname{SYT}{(n;2)}}2^{\ell(\lambda(T))}\prod_{i=1}^n\sigma_i(T)x^{n-\ell(\lambda(T))}.$$
\end{theorem}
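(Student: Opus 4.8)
The plan is to follow the pattern of the proofs of Theorems~\ref{thmEn} and~\ref{Lnx-thm}: rewrite Dumont's identity~\eqref{Wnx} as a statement about a normal ordered operator of the form $(cD_{G'})^n(c)$, substitute into the expansion~\eqref{cdnc-Thm} of Theorem~\ref{TSYTncol}, and then specialize the variables.

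First I would change the grammar. Put $c=x$ and $G'=\{x\rightarrow 2y,~y\rightarrow 1\}$. Then $xD_{G'}$ is again a derivation (it is $D_{G'}$ followed by multiplication by the fixed element $x$), and it agrees with $D_G$ on the generators, since $xD_{G'}(x)=2xy=D_G(x)$ and $xD_{G'}(y)=x=D_G(y)$; hence $D_G=xD_{G'}$ as operators. Therefore
$$D_G^{n+1}(y)=(xD_{G'})^{n}\big(xD_{G'}(y)\big)=(xD_{G'})^{n}(x)=(cD_{G'})^{n}(c),$$
and~\eqref{Wnx} becomes $(cD_{G'})^{n}(c)=xy^{n}W_{n+1}(x/y^2)$.

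Next I would compute the $G'$-derivatives of $c=x$: one has $c_0=x$, $c_1=D_{G'}(x)=2y$, $c_2=D_{G'}^2(x)=2$, and $c_i=0$ for all $i\geqslant 3$. Feeding this into~\eqref{cdnc-Thm}, only the standard Young tableaux whose rows all have one or two boxes contribute, i.e.\ $T\in\SYT(n;2)$, and the term indexed by such a $T$ is $\big(\prod_{i=1}^n\sigma_i(T)\big)(2y)^{w_1(T)}2^{w_2(T)}x^{\,n+1-\ell(\lambda(T))}$. Using the elementary identities $w_1(T)+w_2(T)=\ell(\lambda(T))$ and $w_1(T)+2w_2(T)=n$ for a tableau with at most two columns, the power of $2$ collapses to $2^{\ell(\lambda(T))}$, leaving $2^{\ell(\lambda(T))}\big(\prod_{i=1}^n\sigma_i(T)\big)y^{w_1(T)}x^{\,n+1-\ell(\lambda(T))}$. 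Setting $y=1$ in both $(cD_{G'})^{n}(c)=xy^{n}W_{n+1}(x/y^2)$ and in this expansion, and then cancelling the common factor $x$ (valid because $n+1-\ell(\lambda(T))\geqslant 1$ always), gives exactly the claimed formula for $W_{n+1}(x)$.

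I do not expect a real obstacle: the proof is a direct specialization of Theorem~\ref{TSYTncol}, exactly parallel to the Andr\'e and left peak cases. The two points that need a line of care are the verification that $xD_{G'}$ is a derivation equal to $D_G$ (so that iterating it is legitimate) and the bookkeeping with $w_1,w_2,\ell$ that turns $(2y)^{w_1}2^{w_2}$ into $2^{\ell}y^{w_1}$ and, after $y=1$, into the clean power of $2$; a check against Table~\ref{tab:dummy-ipk} for $n\leqslant 3$ confirms the normalization.
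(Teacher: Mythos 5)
Your proposal is correct and follows essentially the same route as the paper's own proof: rewrite Dumont's identity~\eqref{Wnx} as $(xD_{G'})^{n}(x)=xy^{n}W_{n+1}(x/y^{2})$ with $G'=\{x\rightarrow 2y,\ y\rightarrow 1\}$, specialize $c=x$, $c_1=2y$, $c_2=2$, $c_i=0$ for $i\geqslant 3$ in~\eqref{cdnc-Thm}, and use $w_1(T)+w_2(T)=\ell(\lambda(T))$ to collapse the power of $2$. The only (harmless) additions are your explicit verification that $xD_{G'}=D_G$ on generators and the remark that cancelling the factor $x$ is legitimate, both of which the paper leaves implicit.
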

\begin{proof}
Let $G'=\{x\rightarrow 2y,~y\rightarrow 1\}$.
We find that~\eqref{Wnx} can be rewritten as follows:
\begin{equation}\label{En1xy}
\left(xD_{G'}\right)^{n+1}(y)=\left(xD_{G'}\right)^{n}\left(xD_{G'}(y)\right)=\left(xD_{G'}\right)^{n}(x)=xy^{n}W_{n+1}\left(\frac{x}{y^2}\right).
\end{equation}
Setting $c=x$, we get $c_1=D_{G'}(c)=D_{G'}(x)=2y$, $c_2=D_{G'}(c_1)=D_{G'}(2y)=2$ and $c_i=0$ for all $i\geqslant 3$.
By~\eqref{cdnc-Thm}, we find that
\begin{align*}
xW_{n+1}(x)&=\sum_{T\in\operatorname{SYT}{(n)}}\left(\prod_{i=1}^n\sigma_i(T)c_i^{w_i(T)}\right)c^{n+1-\ell(\lambda(T))}{\bigg|}_{\substack{c=x,~c_1=2y,~c_2=2,\\y=1,~c_{i}=0~{\text{for $i\geqslant 3$}}}}\\
&=\sum_{T\in\operatorname{SYT}{(n;2)}}\prod_{i=1}^n\sigma_i(T)x^{n+1-\ell(\lambda(T))}2^{w_1(T)+w_2(T)}.
\end{align*}
Since $\ell(\lambda(T))=w_1(T)+w_2(T)$ for $T\in\operatorname{SYT}{(n;2)}$, we immediately get the desired result.
\end{proof}

\renewcommand{\arraystretch}{2}
\begin{center}
\begin{table}[ht!]
  \caption{The computation of $W_4(x)=8+16x$.}\label{tab:dummy-ipk}
  {
\begin{tabular}{ccccc|c}
$T\in\SYT(3;2)$&&${\sigma_i(T)},~\ell(\lambda(T))$&&&enumerator\\
\hline
    \begin{ytableau}
    *(gray!20) 3\\
    *(gray!20) 2\\
    *(gray!20) 1 & 2\\
    *(gray!20) 0 & 1 & 3  \\
    \end{ytableau}
    &
    &
  $\substack{\sigma_1(T)=\sigma_2(T)=1,~\sigma_3(T)=2\\\ell(\lambda(T))=2}$
    &
    &
    &$2^2\cdot2x$\\
\hline
    \begin{ytableau}
    *(gray!20) 3\\
    *(gray!20) 2\\
    *(gray!20) 1 & 3\\
    *(gray!20) 0 & 1 & 2  \\
    \end{ytableau}
    &
    &
   $\substack{\sigma_1(T)=\sigma_2(T)=1,~\sigma_3(T)=2\\\ell(\lambda(T))=2}$
    &
    &
    &$2^2\cdot2x$\\
\hline
    \begin{ytableau}
    *(gray!20) 3\\
    *(gray!20) 2 & 3\\
    *(gray!20) 1 & 2\\
    *(gray!20) 0 & 1\\
    \end{ytableau}
    &
    &
  $\substack{\sigma_1(T)=\sigma_2(T)=\sigma_3(T)=1\\\ell(\lambda(T))=3}$
    &
    &
    &$2^3$\\
    \hline
\end{tabular}}
\end{table}
\end{center}
\subsection{Eulerian polynomials}
\hspace*{\parindent}

The bivariate Eulerian polynomials are defined by
\begin{equation}\label{Anxy}
A_n(x,y)=\sum_{\pi\in\msn}x^{\des(\pi)+1}y^{n-\des(\pi)}.
\end{equation}
\begin{theorem}\label{AnxSYT}
We have
\begin{equation*}\label{Anx-01}
A_n(x)=\sum_{T\in \SYT(n)}\left(\prod_{i=1}^n \sigma_i(T)\right)  ~ x^{n+1-\ell(\lambda(T))}=\sum_{T\in \SYT(n)}\left(\prod_{i=1}^n \sigma_i(T)\right)  ~ x^{\ell(\lambda(T))}.
\end{equation*}
The bivariate Eulerian polynomials have the expression
\begin{equation}\label{Anx-02}
A_{n+1}(x, y)= \sum_{T\in \SYT(n;2)}2^{w_2(T)}\prod_{i=1}^n\sigma_i(T)(xy)^{n+1-w_1(T)-w_2(T)}(x+y)^{w_1(T)},
\end{equation}
where $w_i(T)$ is the number of rows in $T$ with $i$ elements.
\end{theorem}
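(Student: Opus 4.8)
The plan is to derive both statements from Theorem~\ref{TSYTncol}, applied to the two grammars for the Eulerian polynomials recorded in~\eqref{xDG701} and~\eqref{xDG702}. The crucial point—already exploited in the proof of Theorem~\ref{Rnx-thm}—is that the expansion~\eqref{cdnc-Thm} remains valid verbatim when the ordinary derivative $D$ is replaced by a formal grammar derivative $D_G$ and $c$ by any expression in the grammar variables, provided one reads $c_i=D_G^i(c)$; indeed the box sorting algorithm underlying~\eqref{cdnc-Thm} uses only the Leibniz rule and the relation $D_G(c_i)=c_{i+1}$, both of which persist in this generality.

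First I would prove the two formulas for $A_n(x)$. Take $G'=\{x\rightarrow y,\ y\rightarrow y\}$ and set $c=x$, so that $c_0=x$ and $c_i=D_{G'}^i(x)=y$ for every $i\geqslant 1$. Substituting into~\eqref{cdnc-Thm} and using $\sum_{i=1}^n w_i(T)=\ell(\lambda(T))$ gives
\begin{equation*}
(xD_{G'})^n(x)=\sum_{T\in\SYT(n)}\left(\prod_{i=1}^n\sigma_i(T)\right)x^{\,n+1-\ell(\lambda(T))}y^{\ell(\lambda(T))}.
\end{equation*}
Comparing this with~\eqref{xDG701}, which asserts $(xD_{G'})^n(x)=y^{n+1}A_n(x/y)$, and setting $y=1$ yields the first displayed equality $A_n(x)=\sum_{T}\left(\prod_i\sigma_i(T)\right)x^{\,n+1-\ell(\lambda(T))}$. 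The second equality then follows immediately from the classical palindromicity $A_n(x)=x^{n+1}A_n(1/x)$: replace $x$ by $1/x$ in the formula just obtained and multiply through by $x^{n+1}$.

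For the bivariate identity~\eqref{Anx-02} I would instead use $G''=\{x\rightarrow 1,\ y\rightarrow 1\}$ and take $c=xy$, so that $c_1=D_{G''}(xy)=x+y$, $c_2=D_{G''}(x+y)=2$ and $c_i=0$ for all $i\geqslant 3$. Since $D_{G''}(x)=1$ we have $(xyD_{G''})^n(xy)=(xyD_{G''})^{n+1}(x)$, and by~\eqref{xDG702} this equals $y^{n+2}A_{n+1}(x/y)$, which is exactly the bivariate polynomial $A_{n+1}(x,y)$ of~\eqref{Anxy}. Expanding the same quantity through~\eqref{cdnc-Thm}, the relations $c_i=0$ for $i\geqslant 3$ kill every tableau having a row of length $\geqslant 3$, so the sum is supported on $T\in\SYT(n;2)$; for such $T$ one has $\ell(\lambda(T))=w_1(T)+w_2(T)$, $\prod_{i\geqslant 1}c_i^{w_i(T)}=(x+y)^{w_1(T)}2^{w_2(T)}$ and $c^{\,n+1-\ell(\lambda(T))}=(xy)^{\,n+1-w_1(T)-w_2(T)}$. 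Collecting these factors reproduces~\eqref{Anx-02}.

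I do not expect a genuine obstacle here. The two points deserving explicit care are the justification—sketched above—that~\eqref{cdnc-Thm} may be invoked for the formal derivatives $D_{G'}$ and $D_{G''}$, and the index shift $(xyD_{G''})^n(xy)=(xyD_{G''})^{n+1}(x)$ that makes $A_{n+1}$, rather than $A_n$, appear in~\eqref{Anx-02}; everything else is routine bookkeeping with $w_i(T)$ and $\ell(\lambda(T))$.
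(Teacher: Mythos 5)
Your proposal is correct, and for the first identity and for the bivariate identity \eqref{Anx-02} it follows the paper's proof essentially verbatim: you use $G'=\{x\rightarrow y,\ y\rightarrow y\}$ with $c=x$, $c_i=y$ for $i\geqslant 1$, and $G''=\{x\rightarrow 1,\ y\rightarrow 1\}$ with $c=xy$, $c_1=x+y$, $c_2=2$, $c_i=0$ for $i\geqslant 3$, which kills all tableaux outside $\SYT(n;2)$, exactly as in parts (A) and (C) of the paper's argument. (Your explicit observation that $(xyD_{G''})^n(xy)=(xyD_{G''})^{n+1}(x)$, which is why $A_{n+1}$ rather than $A_n$ appears, is a point the paper leaves implicit, and your remark that \eqref{cdnc-Thm} transfers to formal grammar derivatives is consistent with how the paper uses it throughout Section~\ref{sec06}.) The one place you genuinely diverge is the second expression $A_n(x)=\sum_{T}\bigl(\prod_{i}\sigma_i(T)\bigr)x^{\ell(\lambda(T))}$: you deduce it from the first via the classical palindromicity $A_n(x)=x^{n+1}A_n(1/x)$, whereas the paper gives a second direct tableau computation, restating Proposition~\ref{grammar03} as $(yD_{G'''})^n(y)|_{y=1}=A_n(x)$ with $G'''=\{x\rightarrow x,\ y\rightarrow x\}$ and $c=y$, so that $c_i=x$ for all $i\geqslant 1$ and the exponent becomes $\sum_i w_i(T)=\ell(\lambda(T))$. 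Both routes are valid; yours is shorter but imports the symmetry of the Eulerian numbers as an external fact, while the paper's is self-contained within the box-sorting framework and, read in reverse, the pair of formulas re-derives that symmetry from the tableau expansion.
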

\begin{proof}
(A) From~\eqref{xDG701}, we see that
$A_n(x)=\left(xD_{G'}\right)^{n}(x)|_{y=1}$, where $G'=\{x\rightarrow y, y\rightarrow y\}$.
Taking $c=x$ in~\eqref{cdnc-Thm}, we get
$c_i=D_{G'}^i(c)=D_{G'}^i(x)=y$ for $i\geqslant 1$.
Combining this with~\eqref{cdnc-Thm}, we arrive at
\begin{align*}
A_n(x)&=\sum_{T\in\operatorname{SYT}{(n)}}\left(\prod_{i=1}^n\sigma_i(T)c_i^{w_i(T)}\right)c^{n+1-\ell(\lambda(T))}{\bigg|}_{c=x,~c_i=y=1}\\
&= \sum_{T\in \SYT(n)} \left(\prod_{i=1}^n \sigma_i(T) \right) ~ x^{n+1-\ell(\lambda(T))}.
\end{align*}

(B) Let $G'''=\{x \rightarrow x, y \rightarrow x\}$.
Note that Proposition~\ref{grammar03} can also be restated as
$$(yD_{G'''})^{n}(y)|_{y=1}=A_n(x).$$
Taking $c=y$ in~\eqref{cdnc-Thm}, we get $c_i=D_{G'''}^i(c)=D_{G'''}^i(y)=x$ for $i\geqslant 1$.
It follows from~\eqref{cdnc-Thm} that
\begin{align*}
A_n(x)&=\sum_{T\in\operatorname{SYT}(n)}\left(\prod_{i=1}^n\sigma_i(T)c_i^{w_i(T)}\right)c^{n+1-\ell(\lambda(T))}{\bigg|}_{c=y=1,~c_i=x}\\
&= \sum_{T\in \SYT(n)} \left(\prod_{i=1}^n \sigma_i(T) \right) ~ x^{\sum_{i=1}^nw_i(T)}\\
&= \sum_{T\in \SYT(n)} \left(\prod_{i=1}^n \sigma_i(T) \right) ~ x^{\ell(\lambda(T))}.
\end{align*}

(C) Let $G''=\{x \rightarrow 1, y \rightarrow 1\}$.
It follows from~\eqref{xDG702} that
$(xyD_{G''})^{n}(xy)=A_{n+1}(x,y)$.
Taking $c=xy$, we get $c_1=D_{G''}(c)=D_{G''}(xy)=x+y$, $c_2=D_{G''}(c_1)=2$ and $c_k=D_{G''}^k(c)=0$ for $k\geqslant 3$.
It follows from~\eqref{cdnc-Thm} that
\begin{align*}
A_{n+1}(x,y)&=\sum_{T\in\operatorname{SYT}{(n;2)}}\left(\prod_{i=1}^n\sigma_i(T)c_i^{w_i(T)}\right)c^{n+1-\ell(\lambda(T))}{\bigg|}_{c=xy,~c_1=x+y,~c_2=2}\\
&= \sum_{T\in \SYT(n;2)}2^{w_2(T)}\prod_{i=1}^n\sigma_i(T)(xy)^{n+1-\ell(\lambda(T))}(x+y)^{w_1(T)}.
\end{align*}
So we arrive at~\eqref{Anx-02}, since $\ell(\lambda(T))=w_1(T)+w_2(T)$ for
$T\in\operatorname{SYT}{(n;2)}$.
\end{proof}

\subsection{The $1/2$-Eulerian polynomials}
\hspace*{\parindent}

Following~\eqref{Ankx-def05}, the $1/2$-Eulerian polynomials $A_n^{{(2)}}(x)$ can be defined by
\begin{equation}\label{An2x-def}
\left(2x\frac{\mathrm{d}}{\mathrm{d}x}\right)^n\frac{1}{\sqrt{1-x}}=\frac{x^nA_n^{(2)}(1/x)}{(1-x)^{n+\frac{1}{2}}}.
\end{equation}
Below are the polynomials $A_n^{{(2)}}(x)$ for $n\leqslant 4$:
\begin{equation*}
A_1^{{(2)}}(x)=1,~
A_2^{{(2)}}(x)=1+2x,~
A_3^{{(2)}}(x)=1+10x+4x^2,~
A_4^{{(2)}}(x)=1+36x+60x^2+8x^3.
\end{equation*}
As a special case of~\eqref{Ankx-Explicit}, we see that
\begin{equation*}\label{An2x-Explicit}
A_n^{(2)}(x)=\sum_{i=1}^n(2i-1)!!\Stirling{n}{i}(2x-2)^{n-i}.
\end{equation*}
The polynomials $A_n^{(2)}(x)$ have several combinatorial and geometric interpretations, see~\cite{Chao19,Savage16} for details.
Here we list two well known interpretations:
$$A_n^{(2)}(x)=\sum_{\pi\in\msn}x^{\exc(\pi)}2^{n-\cyc(\pi)}=\sum_{\sigma\in\mqn}x^{\ap(\sigma)}.$$

We can now give the following result.
\begin{theorem}
Let $A_n^{{(2)}}(x)$ be the $1/2$-Eulerian polynomials, and let $W_n(x)$ be the interior peak polynomials.
Then we have
$$A_n^{{(2)}}(x)=\sum_{T\in\operatorname{SYT}{(n)}}x^{n-\ell(\lambda(T))}\prod_{i=1}^n\sigma_i(T)\left(W_i\left(\frac{1}{x}\right)\right)^{w_i(T)}.$$
\end{theorem}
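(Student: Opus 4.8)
The plan is to apply Theorem~\ref{TSYTncol}, in the form of~\eqref{cdnc-Thm}, to the context-free grammar $G=\{x\rightarrow 2xy,~y\rightarrow x\}$ that already encodes the interior peak polynomials via~\eqref{Wnx}, taking the base function to be $c=y$. This choice supplies the two ingredients that~\eqref{cdnc-Thm} needs. First, by~\eqref{Wnx} the iterated formal derivatives are $c_i=D_G^i(y)=xy^{i-1}W_i(x/y^2)$ for $i\geqslant 1$ (with $c_0=c=y$), so that the ``letters'' entering~\eqref{cdnc-Thm} are exactly the interior peak polynomials evaluated at $x/y^2$; in particular all shapes in $\operatorname{SYT}(n)$ occur, since $W_i\neq 0$. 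Second, one needs the operator side $(cD_G)^nc=(yD_G)^n(y)$ in closed form, and I claim $(yD_G)^n(y)=x^ny\,A_n^{(2)}(y^2/x)$.

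To prove this last identity I would pass to a differential realization, exactly in the spirit of Example~\ref{ex2}. Putting $x=z/(1-z)$ and $y=(1-z)^{-1/2}$, a direct computation shows $D_G=\tfrac1y\cdot 2z\tfrac{\mathrm d}{\mathrm dz}$, hence $yD_G=2z\tfrac{\mathrm d}{\mathrm dz}$. Therefore, by~\eqref{An2x-def},
\[
(yD_G)^n(y)=\left(2z\frac{\mathrm d}{\mathrm dz}\right)^n\frac{1}{\sqrt{1-z}}=\frac{z^nA_n^{(2)}(1/z)}{(1-z)^{n+1/2}},
\]
and translating back with $z=x/y^2$ and $1-z=y^{-2}$ turns the right-hand side into $x^ny\,A_n^{(2)}(y^2/x)$. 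Alternatively, $(yD_G)^n(y)=x^ny\,A_n^{(2)}(y^2/x)$ can be verified purely formally by induction on $n$ from the recursion $A_{n+1}^{(2)}(x)=(1+2nx)A_n^{(2)}(x)+2x(1-x)\tfrac{\mathrm d}{\mathrm dx}A_n^{(2)}(x)$, using $D_G(y^2/x)=2y(1-y^2/x)$; both routes are routine.

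With these in hand,~\eqref{cdnc-Thm} (now with $D=D_G$ and $c=y$) reads
\[
x^ny\,A_n^{(2)}\!\left(\frac{y^2}{x}\right)=\sum_{T\in\operatorname{SYT}(n)}\left(\prod_{i=1}^n\sigma_i(T)\bigl(xy^{i-1}W_i(x/y^2)\bigr)^{w_i(T)}\right)y^{\,n+1-\ell(\lambda(T))}.
\]
I would finish by specializing $y=1$ and $x=1/x_0$, so that $y^2/x=x_0$ and $x/y^2=1/x_0$: the left-hand side becomes $x_0^{-n}A_n^{(2)}(x_0)$, each factor $xy^{i-1}W_i(x/y^2)$ becomes $x_0^{-1}W_i(1/x_0)$, and $y^{\,n+1-\ell(\lambda(T))}$ becomes $1$. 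Using $\sum_{i=1}^n w_i(T)=\ell(\lambda(T))$ to collect the powers of $x_0$ and then multiplying through by $x_0^n$ yields $A_n^{(2)}(x_0)=\sum_T x_0^{\,n-\ell(\lambda(T))}\prod_{i=1}^n\sigma_i(T)\bigl(W_i(1/x_0)\bigr)^{w_i(T)}$, which is the assertion after renaming $x_0$ back to $x$.

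The only genuinely non-routine point is the middle step: producing the closed form $(yD_G)^n(y)=x^ny\,A_n^{(2)}(y^2/x)$ and making sure the passage through the differential realization (or the inductive argument) is legitimate within the grammar algebra. Everything else — the weight bookkeeping and the power-of-$x$ accounting — is the same device used verbatim in the proofs of Theorems~\ref{Rnx-thm},~\ref{thmEn},~\ref{Lnx-thm} and~\ref{AnxSYT}. As a sanity check I would also confirm the small cases against the grammar directly: $c_1=D_G(y)=x$ matches $xy^0W_1(x/y^2)$ since $W_1=1$, and $c_2=D_G^2(y)=D_G(x)=2xy$ matches $xy\,W_2(x/y^2)$ since $W_2=2$.
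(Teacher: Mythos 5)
Your proposal is correct and follows essentially the same route as the paper: both apply~\eqref{cdnc-Thm} to a grammar whose iterated derivatives $c_i$ are interior peak polynomials and whose closed form for $(cD)^n c$ is a $1/2$-Eulerian polynomial, and your grammar $\{x\to 2xy,\ y\to x\}$ with $c=y$ is the paper's $\{x\to y^2,\ y\to xy\}$ with $c=x$ after an invertible renaming of the letters (the paper quotes the closed form from~\eqref{Tnak-Euler} and the $c_i$ from~\eqref{DG12}, while you take the $c_i$ from~\eqref{Wnx} and rederive the closed form from~\eqref{An2x-def}). One caution on your analytic derivation: since $x=z/(1-z)$ and $y=(1-z)^{-1/2}$ satisfy $y^2=1+x$, that realization only yields the identity $(yD_G)^n(y)=x^ny\,A_n^{(2)}(y^2/x)$ modulo this relation, which does not by itself license the later specialization $y=1$, $x=1/x_0$ (a point off that variety) --- so the purely formal inductive verification you offer as an alternative, which does go through using $D_G(y^2/x)=2y(1-y^2/x)$ and the recursion for $A_n^{(2)}$, is the step that actually completes the argument.
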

\begin{proof}
Let $G'=\{x\rightarrow y^2,~y\rightarrow xy\}$.
By~\eqref{Tnak-Euler}, we find that
\begin{equation*}
\left(xD_{G'}\right)^n(x)=xy^{2n}A_n^{(2)}\left(\frac{x^2}{y^2}\right).
\end{equation*}
Setting $c=x$, it follows from~\eqref{DG12} that
$$c_i=D_{G'}^i(x)=y^2x^{i-1}W_i\left(\frac{y^2}{x^2}\right)~{\text{for $n\geqslant 1$}}.$$
Using~\eqref{cdnc-Thm}, we find that
\begin{equation*}
xA_n^{(2)}(x^2)=\sum_{T\in\operatorname{SYT}{(n)}}\left(\prod_{i=1}^n\sigma_i(T)c_i^{w_i(T)}\right)x^{n+1-\ell(\lambda(T))}{\bigg|}_{c_{i}=x^{i-1}W_i\left(\frac{1}{x^2}\right)}.
\end{equation*}
Since $$\prod_{i=1}^nx^{(i-1)w_i(T)}=x^{\sum_{i=1}^n(i-1)w_i(T)}=x^{\sum_{i=1}^niw_i(T)-\ell(\lambda(T))}=x^{n-\ell(\lambda(T))},$$
after simplifying, we obtain the desired result. This completes the proof.
\end{proof}
\subsection{Eulerian polynomials of type $B$}
\hspace*{\parindent}

The study of the type $B$ Eulerian polynomials was initiated by Brenti~\cite{Bre94}.
The types $A$ and $B$ Eulerian polynomials share several similar properties, including real zeros~\cite{Savage15}, combinatorial expansions~\cite{Bre94,Gessel20} and
geometric interpretations~\cite{Solus20}.
We now present an interpretation of the type $B$ Eulerian polynomials.
\begin{theorem}
Let $B_n(x)$ be the type $B$ Eulerian polynomials. We have
$$B_n(x)=\sum_{T\in\operatorname{SYT}{(n)}}\left(\prod_{i=1}^n\sigma_i(T)c_i^{w_i(T)}\right)x^{{\frac{1}{2}}\left(n-\ell(\lambda(T))\right)},$$
where $c_{2i-1}=4^{i-1}(1+x)$ and $c_{2i}=4^i\sqrt{x}$ for $i\geqslant 1$.
\end{theorem}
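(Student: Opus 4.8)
The plan is to obtain the identity from Theorem~\ref{TSYTncol} by selecting a suitable grammar together with a specialization of variables, exactly in the spirit of the earlier theorems of this section. The point of departure is Example~\ref{ex-Bn}, which supplies the grammar $G=\{a\rightarrow ab^2,~b\rightarrow a^2b\}$ and the identity $D_G^n(ab)=ab^{2n+1}B_n(a^2/b^2)$.

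First I would recognize $D_G$ as an operator of the form ``$cD$''. Put $G'=\{a\rightarrow b,~b\rightarrow a\}$ and consider $abD_{G'}$. Multiplying a derivation by a fixed element of a commutative ring again yields a derivation, so $f\mapsto abD_{G'}(f)$ is a derivation of $\mathbb{F}[a,b]$; since it agrees with $D_G$ on the generators, namely $abD_{G'}(a)=ab^2=D_G(a)$ and $abD_{G'}(b)=a^2b=D_G(b)$, we get $D_G=abD_{G'}$ as operators, hence $D_G^n=(abD_{G'})^n$. Writing $c:=ab$, this gives $(cD_{G'})^nc=D_G^n(ab)=ab^{2n+1}B_n(a^2/b^2)$. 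Theorem~\ref{TSYTncol}, whose proof uses only the Leibniz rule and the relations $D_{G'}(c_j)=c_{j+1}$, then applies verbatim with $D=D_{G'}$, so it remains to compute $c_i=D_{G'}^i(ab)$. An easy induction gives $c_0=ab$, and for $i\geqslant 1$ both $c_{2i}=4^iab$ and $c_{2i-1}=4^{i-1}(a^2+b^2)$, since $D_{G'}\bigl(4^{i-1}(a^2+b^2)\bigr)=4^iab$ and $D_{G'}\bigl(4^iab\bigr)=4^i(a^2+b^2)$.

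Finally I would specialize $b=1$ and $a=\sqrt{x}$, which turns $a^2/b^2$ into $x$. The left-hand side becomes $\sqrt{x}\,B_n(x)$, while on the right-hand side $c=\sqrt{x}$, $c_{2i}=4^i\sqrt{x}$, $c_{2i-1}=4^{i-1}(1+x)$, and the factor $c^{\,n+1-\ell(\lambda(T))}$ becomes $x^{\frac12(n+1-\ell(\lambda(T)))}$. Dividing both sides by $\sqrt{x}$ produces
$$B_n(x)=\sum_{T\in\operatorname{SYT}(n)}\left(\prod_{i=1}^n\sigma_i(T)c_i^{w_i(T)}\right)x^{\frac12\left(n-\ell(\lambda(T))\right)}$$
with the stated values of the $c_i$, which is the claim.

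I do not anticipate a serious obstacle: the substance of the argument is the choice of the grammar $G'$ (equivalently, the factorization $D_G=abD_{G'}$) and of the specialization $(a,b)=(\sqrt{x},1)$. The only points requiring a little care are justifying that $abD_{G'}$ is a genuine derivation so that Theorem~\ref{TSYTncol} is available in the formal-grammar setting, and keeping the bookkeeping of powers of $x$ versus $\sqrt{x}$ consistent through the final substitution and the division by $\sqrt{x}$.
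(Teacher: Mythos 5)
Your proposal is correct and follows essentially the same route as the paper: the paper likewise rewrites the grammar of Example~\ref{ex-Bn} as $(xyD_{G'})^n(xy)$ with $G'=\{x\rightarrow y,\ y\rightarrow x\}$, computes $c_{2i-1}=4^{i-1}(x^2+y^2)$ and $c_{2i}=4^{i}xy$, applies~\eqref{cdnc-Thm}, and specializes (setting $y=1$ to get $xB_n(x^2)$ and then replacing $x$ by $\sqrt{x}$, which is the same as your substitution $(a,b)=(\sqrt{x},1)$ followed by division by $\sqrt{x}$). The only difference is cosmetic: you make explicit the verification that $abD_{G'}$ is a derivation agreeing with $D_G$ on generators, which the paper leaves implicit.
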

\begin{proof}
From~\eqref{DG3ab}, we see that
$xB_n(x^2)=(xyD_{G'})^n(xy){\bigg|}_{y=1}$, where $G'=\{x\rightarrow y,~y\rightarrow x\}$.
Taking $c=xy$, we notice that $$c_{2i-1}=D_{G'}^{2i-1}(c)=4^{i-1}(x^2+y^2),~c_{2i}=D_{G'}^{2i}(c)=4^ixy~{\text{for $n\geqslant 1$}}.$$
By~\eqref{cdnc-Thm}, we find that
\begin{align*}
xB_n(x^2)&=\sum_{T\in\operatorname{SYT}{(n)}}\left(\prod_{i=1}^n\sigma_i(T)c_i^{w_i(T)}\right)c^{n+1-\ell(\lambda(T))}{\bigg|}_{\substack{c_{2i-1}=4^{i-1}(x^2+y^2),~c_{2i}=4^ixy,\\y=1}}\\
&=\sum_{T\in\operatorname{SYT}{(n)}}\left(\prod_{i=1}^n\sigma_i(T)c_i^{w_i(T)}\right)x^{n+1-\ell(\lambda(T))}{\bigg|}_{c_{2i-1}=4^{i-1}(1+x^2),~c_{2i}=4^ix},
\end{align*}
which yields the desired result.
\end{proof}
\subsection{Second-order Eulerian polynomials}
\hspace*{\parindent}

Given a Stirling permutation $\sigma\in\mqn$.
The number of descents, ascents and plateaux of $\sigma$ are respectively defined as follows:
\begin{align*}
\des(\pi)&=\#\{i\in[2n]:~\sigma_i>\sigma_{i+1},~\sigma_{2n+1}=0\},\\
\asc(\pi)&=\#\{i\in[0,2n-1]:~\sigma_i<\sigma_{i+1},~\sigma_0=0\},\\
\plat(\pi)&=\#\{i\in[2n-1]:~ \sigma_i=\sigma_{i+1}\}.
\end{align*}
The {\it trivariate second-order Eulerian polynomials} are defined by
\begin{equation}\label{Cnxyz011}
C_n(x,y,z)=\sum_{\sigma\in\mqn}x^{\asc{(\sigma)}}y^{\des(\sigma)}z^{\plat{(\sigma)}}.
\end{equation}
In~\cite[p.~317]{Dumont80},
Dumont found that
\begin{equation}\label{Dumont80}
C_{n+1}(x,y,z)=xyz\left(\frac{\partial}{\partial x}+\frac{\partial}{\partial y}+\frac{\partial}{\partial z}\right)C_n(x,y,z),
\end{equation}
which implies that $C_n(x,y,z)$ is symmetric in the variables $x,y$ and $z$.
By~\eqref{Dumont80}, it is clear that
\begin{equation}\label{grammar-Stirling}
D_{G}^n(x)=C_n(x,y,z),
\end{equation}
where $G=\{x \rightarrow xyz, y\rightarrow xyz, z\rightarrow xyz\}$.
B\'ona~\cite{Bona08} independently noted that the statistics $\des,~\asc$ and $\plat$ are equidistributed on $\mqn$.
In other words,
$$C_n(x)=\sum_{\sigma\in\man}x^{\des(\sigma)}=\sum_{\sigma\in\man}x^{\asc(\sigma)}=\sum_{\sigma\in\man}x^{\plat(\sigma)}.$$
By constructing an urn model, Janson~\cite[Theorem~2.1]{Janson08} rediscovered
the symmetry of $C_n(x,y,z)$.
In~\cite{Haglund12}, Haglund-Visontai introduced a refinement of the polynomial $C_n(x,y,z)$ by indexing each ascent,
descent and plateau by the values where they appear.
Using~\eqref{grammar-Stirling}, Chen-Fu~\cite{Chen22}
found that $C_n(x,y,z)$ is $e$-positive, i.e.,
\begin{equation}\label{Cnxyz}
C_n(x,y,z)=\sum_{i+2j+3k=2n+1}\gamma_{n,i,j,k}(x+y+z)^{i}(xy+yz+zx)^{j}(xyz)^k,
\end{equation}
where the coefficient $\gamma_{n,i,j,k}$ counts 0-1-2-3 increasing plane trees
on $[n]$ with $k$ leaves, $j$ degree one vertices and $i$ degree two vertices. The reader is referred to~\cite{Ma23} for the recent progress on this subject.
In the sequel, we shall present four interpretations of the second-order Eulerian polynomials.

The {\it bivariate second-order Eulerian polynomials} can be defined by
$$C_n(x,y)=\sum_{\sigma\in\mqn}x^{\des(\sigma)}y^{2n+1-\des(\sigma)}=\sum_{\sigma\in\mqn}x^{\des(\sigma)}y^{\asc(\sigma)+\plat(\sigma)}.$$
Below is a deep connection between the Eulerian polynomials and the second-order Eulerian polynomials.
\begin{theorem}\label{thm-Cnxy}
Let $A_n(x,y)$ be the bivariate Eulerian polynomial, which is defined by~\eqref{Anxy}. Then
$$C_n(x,y)=\sum_{T\in\operatorname{SYT}{(n)}}\prod_{i=1}^n\sigma_i(T)\left(\prod_{i=1}^nA_i(x,y)\right)^{w_i(T)}y^{n+1-\ell(\lambda(T))}.$$
\end{theorem}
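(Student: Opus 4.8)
The plan is to realize $C_n(x,y)$ in the form $(cD_{G'})^n(c)$ for a suitable function $c$ and grammar $G'$, exactly as in the other proofs of this section, and then to read off the formula directly from Theorem~\ref{TSYTncol}. The first task is to identify $G'$. By \eqref{grammar-Stirling}, $D_G^n(x)=C_n(x,y,z)$ for $G=\{x\to xyz,\ y\to xyz,\ z\to xyz\}$, and $C_n(x,y,z)$ is symmetric in $x,y,z$ (a consequence of \eqref{Dumont80}). Comparing \eqref{Cnxyz011} with the definition of $C_n(x,y)$, we have $C_n(x,y)=C_n(x,y,y)$, i.e., the bivariate polynomial is obtained by identifying two of the three variables of the trivariate one. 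Since the three production rules of $G$ are identical, this identification is compatible with the formal derivative: if $g(x,y)=f(x,y,y)$, then $[D_Gf]_{z=y}=xy^2(g_x+g_y)$. Hence the relevant two-letter grammar is $H=\{x\to xy^2,\ y\to xy^2\}$, and $C_n(x,y)=D_H^n(y)$ for all $n\ge 0$ (using $y$ as the seed so that the base case $C_0(x,y)=y$ is also covered; for $n\ge 1$ the choice of seed is immaterial because $D_H(x)=D_H(y)$).

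Next I would bring $D_H$ into normal form. Since $D_H$ acts on polynomials in $x,y$ by $f\mapsto xy^2(f_x+f_y)$, we may factor $D_H=yD_{G'}$ with $G'=\{x\to xy,\ y\to xy\}$ — Dumont's grammar for the Eulerian polynomials — because $yD_{G'}(f)=y\cdot xy(f_x+f_y)=xy^2(f_x+f_y)$. Therefore $C_n(x,y)=(yD_{G'})^n(y)$. Now I apply Theorem~\ref{TSYTncol}, in the form \eqref{cdnc-Thm}, with $c=c_0=y$ and formal derivative $D_{G'}$. The coefficients needed are $c_i=D_{G'}^i(y)$ for $i\ge 1$; by Proposition~\ref{grammar03} one has $D_{G'}^i(y)=y^{i+1}A_i(x/y)=A_i(x,y)$, the last equality being immediate from \eqref{Anxy}. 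Substituting $c_i=A_i(x,y)$ and $c=y$ into \eqref{cdnc-Thm} yields
\[
C_n(x,y)=\sum_{T\in\SYT(n)}\left(\prod_{i=1}^n\sigma_i(T)\,A_i(x,y)^{w_i(T)}\right)y^{\,n+1-\ell(\lambda(T))},
\]
which is the asserted identity.

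The step I expect to demand the most care is the reduction from the three-variable grammar to $H$: one must check that the specialization $z=y$ commutes with the formal derivative (the chain-rule identity $[D_Gf]_{z=y}=D_H(f|_{z=y})$, valid precisely because all three rules of $G$ agree) and that the normalization matches, so that $C_n(x,y)=D_H^n(y)$ holds on the nose rather than merely up to a power of $y$. An alternative that avoids the symmetry argument is to deduce from \eqref{Dumont80} directly that the bivariate polynomials satisfy $C_{n+1}(x,y)=xy^2\left(\frac{\partial}{\partial x}+\frac{\partial}{\partial y}\right)C_n(x,y)$ with $C_0(x,y)=y$; this recursion is exactly the one implemented by the operator $yD_{G'}$, so the conclusion $C_n(x,y)=(yD_{G'})^n(y)$ follows, and one may present whichever derivation reads more cleanly. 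Everything after that point is a routine instance of Theorem~\ref{TSYTncol}; a quick check for $n=1,2$ (where $C_1(x,y)=xy^2$ and $C_2(x,y)=xy^4+2x^2y^3$) confirms the normalization.
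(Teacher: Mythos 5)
Your proposal is correct and takes essentially the same approach as the paper: both arguments reduce the theorem to the grammar identity $(yD_{G'})^n(y)=y^{2n+1}C_n\left(\frac{x}{y}\right)$ with $G'=\{x\rightarrow xy,~y\rightarrow xy\}$ and then substitute $c=y$, $c_i=D_{G'}^i(y)=A_i(x,y)$ (via Proposition~\ref{grammar03}) into~\eqref{cdnc-Thm}. The only difference is in how that intermediate identity is justified --- the paper simply recalls Example~\ref{ExG4} (i.e., the differential expression~\eqref{eqr}), whereas you derive it from the trivariate grammar~\eqref{grammar-Stirling} by specializing $z=y$ and invoking the symmetry of $C_n(x,y,z)$, or alternatively from Dumont's recursion~\eqref{Dumont80}; both routes are sound and rest on material already in the paper.
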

\begin{proof}
Recall Example~\ref{ExG4}. Let $G'=\{x\rightarrow xy,~y\rightarrow xy\}$. We notice that
\begin{equation}\label{xyCn02}
(yD_{G'})^n(y)=y^{2n+1}C_n\left(\frac{x}{y}\right).
\end{equation}
Taking $c=y$, it follows from Proposition~\ref{grammar03} that $c_i=D_{G'}^i(y)=A_i(x,y)$ for $i\geqslant 1$.
By~\eqref{cdnc-Thm}, we find that
\begin{equation*}
C_n(x)=\sum_{T\in\operatorname{SYT}{(n)}}\left(\prod_{i=1}^n\sigma_i(T)c_i^{w_i(T)}\right)c^{n+1-\ell(\lambda(T))}{\bigg|}_{c=y,~c_{i}=A_i(x,y)},
\end{equation*}
which gives the desired formula. This completes the proof.
\end{proof}

The first few $A_n(x,y)$ are given as follows:
\begin{equation*}
A_1(x,y)=xy,~
A_2(x,y)=xy^2+x^2y,~
A_3(x,y)=xy^3+4x^2y^2+x^3y.
\end{equation*}
An illustration of Theorem~\ref{thm-Cnxy} is given by Table~\ref{tab:dummy-c3xy}.
\renewcommand{\arraystretch}{2}
\begin{center}
\begin{table}[ht!]
  \caption{The computation of $C_3(x,y)=xy^6+8x^2y^5+6x^3y^4$.}\label{tab:dummy-c3xy}
  {
\begin{tabular}{ccccc|c}
$T$&&${\sigma_i(T)},~w_i(T)$&&&enumerator\\
\hline
    \begin{ytableau}
    *(gray!20) 3\\
    *(gray!20) 2\\
    *(gray!20) 1\\
    *(gray!20) 0 & 1 & 2 &3  \\
    \end{ytableau}
    & $\Longrightarrow$
    &
   $\substack{\sigma_1(T)=\sigma_2(T)=\sigma_3(T)=1\\ w_1(T)=w_2(T)=0,~w_3(T)=1}$
    & &
    &$(xy^3+4x^2y^2+x^3y)y^3$\\
\hline
    \begin{ytableau}
    *(gray!20) 3\\
    *(gray!20) 2\\
    *(gray!20) 1 & 2\\
    *(gray!20) 0 & 1 & 3  \\
    \end{ytableau}
    & $\Longrightarrow$
    &
  $\substack{\sigma_1(T)=\sigma_2(T)=1,~\sigma_3(T)=2\\w_1(T)=w_2(T)=1,~w_3(T)=0}$
    &
    &
    &$2(xy)(xy^2+x^2y)y^2$\\
\hline
    \begin{ytableau}
    *(gray!20) 3\\
    *(gray!20) 2\\
    *(gray!20) 1 & 3\\
    *(gray!20) 0 & 1 & 2  \\
    \end{ytableau}
    & $\Longrightarrow$
    &
   $\substack{\sigma_1(T)=\sigma_2(T)=1,~\sigma_3(T)=2\\w_1(T)=w_2(T)=1,~w_3(T)=0}$
    &
    &
    &$2(xy)(xy^2+x^2y)y^2$\\
\hline
    \begin{ytableau}
    *(gray!20) 3\\
    *(gray!20) 2 & 3\\
    *(gray!20) 1 & 2\\
    *(gray!20) 0 & 1\\
    \end{ytableau}
    & $\Longrightarrow$
    &
  $\substack{\sigma_1(T)=\sigma_2(T)=\sigma_3(T)=1\\w_1(T)=3,~w_2(T)=w_3(T)=0}$
    &
    &
    &$(xy)^3y$\\
    \hline
\end{tabular}}
\end{table}
\end{center}

Recall Example~\ref{ExG4}.
The grammatical description of the second-order Eulerian polynomials $C_n(x)$ can also be restated as follows:
\begin{equation}\label{xyCn}
(xD_{G})^n(x)=y^{2n+1}C_n\left(\frac{x}{y}\right),~\text{where $G=\{x\rightarrow y^2,~y\rightarrow y^2\}$.}
\end{equation}

\begin{theorem}\label{cnx01}
Let $C_n(x)$ be the second-order Eulerian polynomials. We have
$$C_n(x)=\sum_{T\in\operatorname{SYT}{(n)}}\left(\prod_{i=1}^n\sigma_i(T){i!}^{w_i(T)}\right)x^{n+1-\ell(\lambda(T))}.$$
\end{theorem}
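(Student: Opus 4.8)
The plan is to specialize the master identity~\eqref{cdnc-Thm} of Theorem~\ref{TSYTncol} to the grammar $G=\{x\rightarrow y^2,~y\rightarrow y^2\}$, using the restated grammatical description~\eqref{xyCn}, namely $(xD_{G})^n(x)=y^{2n+1}C_n(x/y)$. Concretely, I would apply~\eqref{cdnc-Thm} with $D=D_{G}$ and $c=x$, exactly in the spirit of the proofs of Theorems~\ref{Lnx-thm},~\ref{thm-Cnxy} and~\ref{AnxSYT}.

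First I would compute the derivatives $c_i=D_{G}^i(x)$. Since $D_{G}(y)=y^2$, the Leibniz rule gives $D_{G}(y^{j})=jy^{j+1}$, so an immediate induction yields $c_i=D_{G}^i(x)=i!\,y^{i+1}$ for every $i\geqslant 1$, while $c=c_0=x$. Writing $\ell=\ell(\lambda(T))$ for brevity, substitution into~\eqref{cdnc-Thm} turns the summand indexed by $T\in\operatorname{SYT}(n)$ into
\[
\left(\prod_{i=1}^n\sigma_i(T)\,(i!\,y^{i+1})^{w_i(T)}\right)x^{n+1-\ell}=\left(\prod_{i=1}^n\sigma_i(T)\,(i!)^{w_i(T)}\right)y^{\sum_{i=1}^n(i+1)w_i(T)}x^{n+1-\ell}.
\]
Since $\sum_{i\geqslant 1}iw_i(T)=n$ and $\sum_{i\geqslant 1}w_i(T)=\ell$, the exponent of $y$ equals $n+\ell$, so~\eqref{cdnc-Thm} combined with~\eqref{xyCn} gives
\[
y^{2n+1}C_n\!\left(\frac{x}{y}\right)=\sum_{T\in\operatorname{SYT}(n)}\left(\prod_{i=1}^n\sigma_i(T)\,(i!)^{w_i(T)}\right)y^{n+\ell}\,x^{n+1-\ell}.
\]

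To conclude I would divide both sides by $y^{2n+1}$, which converts the right-hand side into $\sum_{T}\bigl(\prod_{i}\sigma_i(T)(i!)^{w_i(T)}\bigr)(x/y)^{n+1-\ell}$, and then set $y=1$; this is exactly the asserted expansion of $C_n(x)$. I do not anticipate any real obstacle: the whole argument is a routine change of grammar followed by Theorem~\ref{TSYTncol}, and the only point requiring care is the bookkeeping of the $y$-exponents via $\sum_i iw_i(T)=n$ and $\sum_i w_i(T)=\ell(\lambda(T))$. As a consistency check one can recover $C_3(x)=x+8x^2+6x^3$ from the four tableaux and box sorting indices recorded in Table~\ref{tab:dummy-03}, the contributions being $6x^3$, $4x^2$, $4x^2$ and $x$.
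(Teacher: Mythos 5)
Your proposal is correct and follows essentially the same route as the paper: both invoke~\eqref{xyCn} with $G=\{x\rightarrow y^2,\ y\rightarrow y^2\}$, take $c=x$ so that $c_i=i!\,y^{i+1}$, and substitute into~\eqref{cdnc-Thm} before setting $y=1$. Your explicit tracking of the $y$-exponent via $\sum_i(i+1)w_i(T)=n+\ell(\lambda(T))$ is a slightly more careful rendering of the same specialization, and your numerical check of $C_3(x)$ is consistent with Table~\ref{tab:dummy-03}.
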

\begin{proof}
From~\eqref{xyCn}, we see that
$C_n(x)=(xD_{G})^n(x){\bigg|}_{y=1}$, where $G=\{x\rightarrow y^2,~y\rightarrow y^2\}$.
Taking $c=x$, then $c_i=D_{G}^i(c)=D_{G}^i(x)=i!y^{i+1}$ for $i\geqslant 1$.
By~\eqref{cdnc-Thm}, we get
\begin{equation*}
C_n(x)=\sum_{T\in\operatorname{SYT}{(n)}}\left(\prod_{i=1}^n\sigma_i(T)c_i^{w_i(T)}\right)c^{n+1-\ell(\lambda(T))}{\bigg|}_{c=x,~c_{i}=i!y^{i+1},~y=1},
\end{equation*}
which yields the desired result. This completes the proof.
\end{proof}

\begin{theorem}\label{cnx02}
For the trivariate second-order Eulerian polynomials, we have
$$C_{n+1}(x,y,z)=\sum_{T\in\operatorname{SYT}{(n;3)}}\prod_{i=1}^n\sigma_i(T){c_1}^{w_1(T)}{c_2}^{w_2(T)}6^{w_3(T)}
(xyz)^{n+1-\ell(\lambda(T))},$$
where $c_1=xy+yz+xz$ and $c_2=2x+2y+2z$.
\end{theorem}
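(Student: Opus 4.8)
The plan is to mimic the derivations of Theorems~\ref{thm-Cnxy} and~\ref{cnx01}: recast the grammatical identity~\eqref{grammar-Stirling} in the operator form $(cD_{G'})^{n}(c)$ and then feed the resulting iterated derivatives into the master formula~\eqref{cdnc-Thm}. Starting from $G=\{x\rightarrow xyz,~y\rightarrow xyz,~z\rightarrow xyz\}$, I would introduce $G'=\{x\rightarrow 1,~y\rightarrow 1,~z\rightarrow 1\}$. Since a formal derivative is determined by its action on the generators and $D_G(v)=xyz=xyz\,D_{G'}(v)$ for each $v\in\{x,y,z\}$, the operator identity $D_G=(xyz)D_{G'}$ holds. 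Writing $c:=xyz$ and using $D_{G'}(x)=1$ together with~\eqref{grammar-Stirling}, this gives
\begin{equation*}
C_{n+1}(x,y,z)=D_G^{n+1}(x)=(cD_{G'})^{n+1}(x)=(cD_{G'})^{n}\!\left(c\,D_{G'}(x)\right)=(cD_{G'})^{n}(c).
\end{equation*}

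Next I would compute the iterated $G'$-derivatives of $c=xyz$. By the Leibniz rule and $D_{G'}(x)=D_{G'}(y)=D_{G'}(z)=1$ one finds $c_1=D_{G'}(xyz)=xy+yz+xz$, then $c_2=D_{G'}(c_1)=2x+2y+2z$, then $c_3=D_{G'}(c_2)=6$, and $c_i=0$ for all $i\geqslant 4$. In particular $c_1$ and $c_2$ are exactly the polynomials named in the statement.

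Finally I would specialize~\eqref{cdnc-Thm} at $c=xyz$, $c_1=xy+yz+xz$, $c_2=2x+2y+2z$, $c_3=6$ and $c_i=0$ for $i\geqslant 4$. Any $T\in\SYT(n)$ having a row of length at least $4$ contributes a factor $c_i^{w_i(T)}=0$ for some $i\geqslant 4$, so only tableaux in $\SYT(n;3)$ survive; for such $T$ one has $w_i(T)=0$ for $i\geqslant 4$, so $\prod_{i=1}^{n}\sigma_i(T)c_i^{w_i(T)}$ reduces to $\bigl(\prod_{i=1}^{n}\sigma_i(T)\bigr)c_1^{w_1(T)}c_2^{w_2(T)}6^{w_3(T)}$, while the factor $c^{\,n+1-\ell(\lambda(T))}$ becomes $(xyz)^{n+1-\ell(\lambda(T))}$. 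This is precisely the asserted formula. The computation involves no genuine obstacle; the only two points that need a sentence of care are the verification that $D_G=(xyz)D_{G'}$ is a bona fide operator identity (which is what legitimizes regrouping the powers as in the display above), and the observation that the vanishing $c_i=0$ for $i\geqslant 4$ is exactly what cuts the sum over $\SYT(n)$ down to a sum over $\SYT(n;3)$.
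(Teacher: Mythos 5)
Your proposal is correct and follows essentially the same route as the paper's proof: rewrite the grammar $\{x\rightarrow xyz,\ y\rightarrow xyz,\ z\rightarrow xyz\}$ as $(xyzD_{G'})$ with $G'=\{x\rightarrow 1,\ y\rightarrow 1,\ z\rightarrow 1\}$, compute $c_1=xy+yz+xz$, $c_2=2x+2y+2z$, $c_3=6$, $c_i=0$ for $i\geqslant 4$, and substitute into~\eqref{cdnc-Thm}. The only difference is that you spell out the operator identity $D_G=(xyz)D_{G'}$ and the truncation to $\SYT(n;3)$ more explicitly than the paper does, which is a welcome clarification rather than a deviation.
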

\begin{proof}
It follows from~\eqref{grammar-Stirling} that
$(xyzD_{G})^n(xyz)=C_{n+1}(x,y,z)$, where $$G=\{x\rightarrow 1,~y\rightarrow 1, z\rightarrow 1\}.$$
Setting $c=xyz$, we get that $c_1=xy+yz+xz$, $c_2=2x+2y+2z$, $c_3=D_{G}(2x+2y+2z)=6$,
and $c_i=0$ for $i\geqslant 4$.
Substituting $c=xyz,~c_{1}=xy+yz+xz,~c_{2}=2x+2y+2z$, $c_3=6$, and $c_i=0$ for $i\geqslant 4$ into the following expression:
\begin{equation*}
C_{n+1}(x,y,z)=\sum_{T\in\operatorname{SYT}{(n)}}\left(\prod_{i=1}^n\sigma_i(T)c_i^{w_i(T)}\right)
c^{n+1-\ell(\lambda(T))},
\end{equation*}
we obtain the desired formula. This completes the proof.
\end{proof}

In the following, we shall deduce another interpretation of $C_n(x)$ in terms of standard Young tableaux.
From Example~\ref{ExG4}, we see that let $G'=\{x\rightarrow x,~y\rightarrow x\}$, then we have
\begin{equation}\label{y2DG12}
(y^2D_{G'})^n(y)=y^{2n+1}C_n\left(\frac{x}{y}\right).
\end{equation}

Let's rewrite $(c^2D)^n c$ as follows:
$$\left(c_{(2n)}c_{(2n-1)}D_{(n)}\right)\left(c_{(2n-2)}c_{(2n-3)}D_{(n-1)}\right) \cdots \left(c_{(4)}c_{(3)}D_{(2)}\right) \left(c_{(2)}c_{(1)}D_{(1)}\right) c_{(0)}.$$
As an variant of Definition~\ref{defOWP}, we introduce the following definition.
\begin{definition}\label{defOWP02}
Let $\operatorname{{\overline{OWP}}}_{n}$
denote the collection of ordered weak set partitions of $[n]$ into $2n+1$ blocks, i.e., $[n]=B_0\cup B_1\cup\overline{B}_1\cup B_2\cup\overline{B}_2\cdots\cup B_n\cup\overline{B}_n$, and for which the following conditions hold:
$(a)$ $1\in B_0$; $(b)$ if $B_i$ or $\overline{B}_i$ is nonempty, then its minimum element larger than $i$.
\end{definition}

\begin{figure}[ht!]
\renewcommand{\arraystretch}{2}
\begin{center}
\begin{tabular}{c}
    \begin{ytableau}
    *(gray!20) \bar{1}&\\
    *(gray!20) 1&\\
    *(gray!20) 0 & 1 \\
    \end{ytableau}\\
\end{tabular}~~\quad
\begin{tabular}{c}
    \begin{ytableau}
    *(gray!20) \bar{2}&\\
    *(gray!20) 2&\\
    *(gray!20) \bar{1}&\\
    *(gray!20) 1&\\
    *(gray!20) 0 & 1 & 2  \\
    \end{ytableau}~~\quad
    \begin{ytableau}
    *(gray!20) \bar{2}&\\
    *(gray!20) 2&\\
    *(gray!20) \bar{1}&\\
    *(gray!20) 1 & 2\\
    *(gray!20) 0 & 1\\
    \end{ytableau}\\
\end{tabular}~~\quad
\begin{tabular}{c}
    \begin{ytableau}
    *(gray!20) \bar{2}&\\
    *(gray!20) 2&\\
    *(gray!20) \bar{1}&2\\
    *(gray!20) 1&\\
    *(gray!20) 0 & 1 \\
    \end{ytableau}\\
\end{tabular}
\end{center}
\caption{Represents of $(c^2D)~c = c^2c_1$ and $(c^2D)^2~c=c^4c_2+2c^3c_1^2$ by using $\operatorname{YWCT}$}
\end{figure}

\begin{figure}[!ht]\label{figc2D3c}
\renewcommand{\arraystretch}{2}
\begin{center}
\begin{tabular}{|c|c|c|c|c|}
\hline
$T \in \SYT(3)$&
    \begin{ytableau}
    *(gray!20) \bar{3}\\
    *(gray!20) 3\\
    *(gray!20) \bar{2}\\
    *(gray!20) 2\\
    *(gray!20) \bar{1}\\
    *(gray!20) 1\\
    *(gray!20) 0 & 1 & 2 &3  \\
    \end{ytableau}
    &
    \begin{ytableau}
    *(gray!20) \bar{3}\\
    *(gray!20) 3\\
    *(gray!20) \bar{2}\\
    *(gray!20) 2\\
    *(gray!20) \bar{1}\\
    *(gray!20) 1 & 3\\
    *(gray!20) 0 & 1 & 2  \\
    \end{ytableau}
    &
    \begin{ytableau}
    *(gray!20) \bar{3}\\
    *(gray!20) 3\\
    *(gray!20) \bar{2}\\
    *(gray!20) 2\\
    *(gray!20) \bar{1}\\
    *(gray!20) 1 & 2\\
    *(gray!20) 0 & 1 & 3  \\
    \end{ytableau}
    &
    \begin{ytableau}
    *(gray!20) \bar{3}\\
    *(gray!20) 3\\
    *(gray!20) \bar{2}\\
    *(gray!20) 2\\
    *(gray!20) \bar{1} & 3\\
    *(gray!20) 1 & 2\\
    *(gray!20) 0 & 1\\
    \end{ytableau}\\
\hline
$\prod_{i=1}^3\delta_i(T) \cdot w(T)$&
$c^6c_3$
&
$4c^5c_2c_1$
&
$2\cdot 2c^5c_2c_1$
&
$2\cdot 3c^4c_1^3$\\
\hline
$C_3(x)$&
$x$
&
$4x^2$
&
$4x^2$
&
$6x^3$\\
\hline
\end{tabular}
\end{center}
\caption{$(c^2D)^3~c =c^6c_3+8c^5c_2c_1+6c^4c_1^3$ and $C_3(x)=x+8x^2+6x^3$}\label{figc2D3c}
\end{figure}

For any $p\in\operatorname{{\overline{OWP}}}_{n}$, the weight function of the corresponding standard Young tableau is given as follows:
\begin{equation}\label{wT02}
w(T)=c^{2n+1-\ell(\lambda(T))}\prod_{i=1}^nc_i^{w_i(T)}.
\end{equation}
In Figure~\ref{figc2D3c}, we present illustrations of $w(T)$ for all $T\in\SYT(3)$ as well as the so called second-order box sorting indices.
In the same way as in the proof of Theorem~\ref{TSYTncol}, it is routine to check the following result, and we omit the proof for simplify.
\begin{theorem}\label{thmc2D}
For $n\geqslant 1$, we have $(c^2D)^nc=\sum_{p\in\operatorname{\overline{OWP}}_{n}}w(p)$. Moreover,
\begin{equation}\label{c2D}
(c^2D)^n c=\sum_{T\in\operatorname{SYT}{(n)}}\left(\prod_{i=1}^n\delta_i(T)c_i^{w_i(T)}\right)c^{2n+1-\ell(\lambda(T))}.
\end{equation}
We call $\delta_i(T)$ the second-order box sorting index of the entry $i$, which is defined by
\begin{equation*}
\delta_i(T)=\left\{
  \begin{array}{ll}
   2i-\operatorname{col}_1(T_i), & \hbox{if $i$ is in the first column of $T$;} \\
    \operatorname{col}_{k}(T_i)-\operatorname{col}_{k+1}(T_i)+1, & \hbox{if $i$ is in the $(k+1)$-th column of $T$, where $k\geqslant 1$.}
  \end{array}
\right.
\end{equation*}
\end{theorem}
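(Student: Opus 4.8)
The plan is to mirror, line by line, the proof of Theorem~\ref{TSYTncol}, upgrading the single-box bookkeeping of $(cD)^nc$ to the double-box bookkeeping forced by $(c^2D)^nc$. First I would write
$$(c^2D)^nc=\left(c_{(2n)}c_{(2n-1)}D_{(n)}\right)\left(c_{(2n-2)}c_{(2n-3)}D_{(n-1)}\right)\cdots\left(c_{(2)}c_{(1)}D_{(1)}\right)c_{(0)},$$
noting that after expanding the first $i-1$ factors a monomial involves only $c_{(0)},c_{(1)},\ldots,c_{(2i-2)}$, so the operator $D_{(i)}$ has exactly $2i-1$ targets. Accordingly I run a box sorting algorithm whose $i$-th step inserts the element $i$ into one of the $2i-1$ boxes currently open, namely $B_0,B_1,\overline{B}_1,\ldots,B_{i-1},\overline{B}_{i-1}$ (this records which $c_{(j)}$ receives $D_{(i)}$), and then opens the two new empty boxes $B_i$ and $\overline{B}_i$ (recording the two new factors $c_{(2i-1)}$ and $c_{(2i)}$). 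Exactly as in Lemma~\ref{Lemma-cdnc}, this sets up a weight-preserving bijection between the terms in the expansion of $(c^2D)^nc$ and the elements of $\operatorname{\overline{OWP}}_{n}$, the constraint on minima in Definition~\ref{defOWP02} encoding that $D_{(i)}$ cannot act on a block born at a later stage. Hence $(c^2D)^nc=\sum_{p\in\operatorname{\overline{OWP}}_{n}}w(p)$, and the weight of each $p$ has the shape~\eqref{wT02} since $p$ has $2n+1$ blocks in all: the $\ell(\lambda(T))$ nonempty ones contribute $\prod_i c_i^{w_i(T)}$ and the remaining $2n+1-\ell(\lambda(T))$ empty ones contribute the factor $c^{2n+1-\ell(\lambda(T))}$.

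Next I would use the very same map $\phi$ of steps $OS1$ and $OS2$ (reorder the rows by decreasing length, delete empty boxes, sort each column upward) to send $\operatorname{\overline{OWP}}_{n}$ to $\operatorname{SYT}(n)$; because $\phi$ deletes only empty boxes and preserves the multiset of nonempty block sizes, we again have $w(p)=w(\phi(p))$ with $w(T)$ given by~\eqref{wT02}, and checking that $\phi(p)$ is genuinely standard uses condition $(b)$ of Definition~\ref{defOWP02} exactly as before. It then remains to evaluate $\#\phi^{-1}(T)$ by reconstructing, entry by entry $i=1,2,\ldots,n$, every $p\in\operatorname{\overline{OWP}}_{n}$ with $\phi(p)=T$; as in Theorem~\ref{TSYTncol}, the number of legal slots for $i$ will depend only on $T_{i-1}$ and $T_i$, so the answer is a product. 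If $i$ sits in the $(k+1)$-st column of $T$ with $k\geqslant1$, then $i$ must be dropped into a nonempty block, and the count is word for word that of Theorem~\ref{TSYTncol}, namely $\operatorname{col}_k(T_i)-\operatorname{col}_{k+1}(T_i)+1$ — passing from $cD$ to $c^2D$ changes only the supply of empty boxes, not of nonempty ones, so nothing is altered here. If $i$ sits in the first column of $T$, then $i$ must be dropped into an empty box; at the $i$-th stage there are $2i-1$ open boxes, of which $\operatorname{col}_1(T_i)-1$ are nonempty just before $i$ is placed (the singleton $\{i\}$ being about to create the $\operatorname{col}_1(T_i)$-th first-column entry), so the number of empty boxes is $2i-\operatorname{col}_1(T_i)=\delta_i(T)$. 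Multiplying over $i$ gives $\#\phi^{-1}(T)=\prod_{i=1}^n\delta_i(T)$, and combining this with the first paragraph yields~\eqref{c2D}.

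I expect the genuinely delicate point — the same one already present in Theorem~\ref{TSYTncol} — to be justifying the multiplication principle: one must argue that the empty boxes opened at earlier stages are freely interchangeable and never constrain the number of legal positions for a later entry, so that the stage-$i$ count really is a function of $T$ alone. The only essentially new bookkeeping is the additive constant in the first-column case, where the answer is $2i-\operatorname{col}_1(T_i)$ rather than the first-order $i-\operatorname{col}_1(T_i)+1$; the discrepancy is exactly the $i-1$ extra empty boxes created by the second stream of multiplications (the factors $c_{(2i)}$). Once these points are settled, everything else transcribes verbatim from the proof of Theorem~\ref{TSYTncol}, which is why the paper calls the verification routine.
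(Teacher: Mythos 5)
Your proposal is correct and is precisely the argument the paper has in mind: the paper omits the proof of Theorem~\ref{thmc2D}, stating only that it is routine ``in the same way as in the proof of Theorem~\ref{TSYTncol},'' and your write-up carries out exactly that adaptation, with the key new computation $2i-1-(\operatorname{col}_1(T_i)-1)=2i-\operatorname{col}_1(T_i)$ for the number of empty boxes correctly accounting for the second stream of factors $c_{(2i)}$. The counts also agree with the paper's worked example in Figure~\ref{figc2D3c}, so nothing further is needed.
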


\begin{theorem}\label{cnx03}
Let $C_n(x)$ be the second-order Eulerian polynomials.
Then we have
$$C_n(x)=\sum_{T \in \SYT(n)} \prod_{i=1}^n \delta_i(T)  x^{\ell(\lambda(T))}.$$
\end{theorem}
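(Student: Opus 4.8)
The plan is to specialize the grammatical identity \eqref{y2DG12} by feeding it into Theorem~\ref{thmc2D}, in exact parallel with part (B) of the proof of Theorem~\ref{AnxSYT}. Recall from Example~\ref{ExG4} that for the grammar $G'=\{x\rightarrow x,~y\rightarrow x\}$ one has, by \eqref{y2DG12},
\[
(y^2D_{G'})^n(y)=y^{2n+1}C_n\!\left(\frac{x}{y}\right).
\]

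First I would record the relevant iterated derivatives of the base letter. Setting $c=y$, the rules $y\rightarrow x$ and $x\rightarrow x$ give $c_1=D_{G'}(y)=x$ and then, by induction, $c_i=D_{G'}^{i}(y)=D_{G'}^{i-1}(x)=x$ for every $i\geqslant 1$. Substituting $c=y$ and $c_i=x$ $(i\geqslant 1)$ into the expansion \eqref{c2D} provided by Theorem~\ref{thmc2D} yields
\[
(y^2D_{G'})^n(y)=\sum_{T\in\SYT(n)}\left(\prod_{i=1}^n\delta_i(T)\,x^{w_i(T)}\right)y^{2n+1-\ell(\lambda(T))}.
\]
Since every row of $T$ is counted exactly once among the $w_i(T)$, we have $\sum_{i=1}^n w_i(T)=\ell(\lambda(T))$, so $\prod_{i=1}^n x^{w_i(T)}=x^{\ell(\lambda(T))}$. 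Comparing the two displayed expressions for $(y^2D_{G'})^n(y)$ and then setting $y=1$ gives
\[
C_n(x)=\sum_{T\in\SYT(n)}\left(\prod_{i=1}^n\delta_i(T)\right)x^{\ell(\lambda(T))},
\]
which is the assertion.

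I do not expect any genuine obstacle here: the entire content is already packaged in Theorem~\ref{thmc2D}, and the only thing one must observe is that $G'$ is precisely the grammar for which all iterated derivatives of the starting letter collapse to the single letter $x$ — exactly the feature that turns the product $\prod_i c_i^{w_i(T)}$ into the clean monomial $x^{\ell(\lambda(T))}$. As a sanity check one can verify the identity for small $n$: for $n=2$ the row tableau contributes $\prod_i\delta_i(T)=1$ with $x^{\ell(\lambda(T))}=x$, while the column tableau contributes $\prod_i\delta_i(T)=1\cdot 2=2$ with $x^{\ell(\lambda(T))}=x^2$, giving $C_2(x)=x+2x^2$; and the case $n=3$, which reproduces $C_3(x)=x+8x^2+6x^3$, is displayed in Figure~\ref{figc2D3c}.
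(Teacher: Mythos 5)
Your proof is correct and follows essentially the same route as the paper: set $c=y$ for the grammar $G'=\{x\rightarrow x,\ y\rightarrow x\}$ so that $c_i=x$ for all $i\geqslant 1$, substitute into the expansion \eqref{c2D} of Theorem~\ref{thmc2D}, use $\sum_i w_i(T)=\ell(\lambda(T))$, and set $y=1$. The small-case checks are a nice addition but the argument is otherwise identical to the paper's.
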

\begin{proof}
From~\eqref{y2DG12}, it is natural to set $c=y$. Then $c_i=D_{G'}^i(c)=D_{G'}^i(y)=x$ for any $i\geqslant 1$,
where $G'=\{x\rightarrow x,~y\rightarrow x\}$.
Using~\eqref{c2D}, we find that
\begin{align*}
C_n(x)&=\sum_{T\in\operatorname{SYT}{(n)}}\left(\prod_{i=1}^n\delta_i(T)c_i^{w_i(T)}\right)c^{2n+1-\ell(\lambda(T))}{\bigg|}_{c=y,~c_{i}=x,~y=1},
\end{align*}
which yields the desired result. This completes the proof.
\end{proof}

Let $k$ be a given positive integer.
Let $\operatorname{{\overline{OWP}}}_{n}^{(k)}$
denote the collection of ordered weak set partitions of $[n]$ into $kn+1$ blocks, i.e., $[n]=B_0\cup B_{11}\cup{B}_{12}\cup\cdots \cup B_{1k}\cup B_{21}\cup{B}_{22}\cup\cdots\cup B_{2k}\cup \cdots \cup B_{n1}\cup B_{n2}\cup\cdots \cup{B}_{nk}$, and for which the following conditions hold:
$(a)$ $1\in B_0$; $(b)$ if $B_{ik}$ is nonempty, then its minimum element larger than $i$, where $1\leqslant i\leqslant n$.
For any $p\in\operatorname{{\overline{OWP}}}_{n}^{(k)}$, the weight function of the corresponding standard Young tableau is defined by
\begin{equation*}\label{wT02}
w(T)=c^{kn+1-\ell(\lambda(T))}\prod_{i=1}^nc_i^{w_i(T)}.
\end{equation*}
Along the same lines as discussed in Section~\ref{sec05},
it is routine to verify that
\begin{equation}\label{ckD}
(c^kD)^n c=\sum_{T\in\operatorname{SYT}{(n)}}\left(\prod_{i=1}^n\delta_i^{(k)}(T)c_i^{w_i(T)}\right)c^{kn+1-\ell(\lambda(T))},
\end{equation}
where
\begin{equation*}
\delta_i^{(k)}(T)=\left\{
  \begin{array}{ll}
   ki-\operatorname{col}_1(T_i)-(k-2), & \hbox{if $i$ is in the first column of $T$;} \\
    \operatorname{col}_{k}(T_i)-\operatorname{col}_{k+1}(T_i)+1, & \hbox{if $i$ is in the $(k+1)$-th column of $T$, where $k\geqslant 1$.}
  \end{array}
\right.
\end{equation*}
It is clear that $\delta_i^{(1)}(T)=\sigma_i(T)$ and $\delta_i^{(2)}(T)=\delta_i(T)$.

A {\it $k$-Stirling permutation} of order $n$ is a multiset permutation of $\{1^k,2^k,\ldots,n^k\}$
with the property that all elements between two occurrences of $i$ are at least $i$ for all $i\in [n]$, see~\cite{Lin21,Ma23} for the recent study on $k$-Stirling permutations and their variants.
Let $\mqn(k)$ be the set of $k$-Stirling permutations of order $n$. It is clear that $\mqn(1)=\msn,~\mqn(2)=\mqn$.
We say that an index $i\in [kn]$ is a {\it descent} of $\sigma$ if
$\sigma_{i}>\sigma_{i+1}$ or $i=kn$. Let $\mqn(k)$ be the set of $k$-Stirling permutations of order $n$. The {\it $k$-order Eulerian polynomials} are defined by $$C_n(x;k)=\sum_{\sigma\in\mqn(k)}x^{\des(\pi)},~C_0(x;k)=1.$$
Following~\cite[Lemma~1]{Dzhumadil14},
the polynomials $C_n(x;k)$ satisfy the recurrence relation
\begin{equation}\label{recuCnx}
C_{n+1}(x;k)=(1+kn)xC_n(x;k)+x(1-x)\frac{\mathrm{d}}{\mathrm{d}x}C_n(x;k),~C_0(x;k)=1.
\end{equation}
Note that $C_n(x;1)=A_n(x)$ and $C_n(x;2)=C_n(x)$.
By~\eqref{recuCnx}, it is easy to verify the following.
\begin{lemma}\label{lemmaCnk}
We have
$$\left(\frac{x}{(1-x)^k}\frac{\mathrm{d}}{\mathrm{d}x}\right)^n\frac{x}{1-x}=\frac{C_n(x;k+1)}{(1-x)^{n+kn+1}}.$$
\end{lemma}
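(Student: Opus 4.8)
The plan is to prove the identity by induction on $n$, drawing only on the recurrence~\eqref{recuCnx} for $C_n(x;k+1)$. I would write $T_k=\frac{x}{(1-x)^k}\frac{\mathrm{d}}{\mathrm{d}x}$ for the operator in question and set $m_n=(k+1)n+1$, so that the claim becomes $T_k^{\,n}\frac{x}{1-x}=\frac{C_n(x;k+1)}{(1-x)^{m_n}}$. The base case $n=1$ is a one-line check: $T_k\frac{x}{1-x}=\frac{x}{(1-x)^k}\cdot\frac{1}{(1-x)^2}=\frac{x}{(1-x)^{k+2}}$, which is $\frac{C_1(x;k+1)}{(1-x)^{m_1}}$ since $C_1(x;k+1)=x$ and $m_1=k+2$.

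For the inductive step I would assume $T_k^{\,n}\frac{x}{1-x}=\frac{C_n(x;k+1)}{(1-x)^{m_n}}$ and apply $T_k$ once more. Differentiating the quotient gives
\[
\frac{\mathrm{d}}{\mathrm{d}x}\,\frac{C_n(x;k+1)}{(1-x)^{m_n}}
=\frac{(1-x)\frac{\mathrm{d}}{\mathrm{d}x}C_n(x;k+1)+m_n\,C_n(x;k+1)}{(1-x)^{m_n+1}},
\]
so multiplying by $\frac{x}{(1-x)^k}$ produces the numerator $x(1-x)\frac{\mathrm{d}}{\mathrm{d}x}C_n(x;k+1)+m_n\,xC_n(x;k+1)$ over $(1-x)^{m_n+1+k}$. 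Because $m_n=1+(k+1)n$, this numerator is exactly the right-hand side of~\eqref{recuCnx} with $k$ replaced by $k+1$, hence equals $C_{n+1}(x;k+1)$; and $m_n+1+k=(k+1)(n+1)+1=m_{n+1}$, so the exponent is also what is wanted. This closes the induction.

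The argument is routine; the only point I expect to require care is the index bookkeeping — one must check that the exponent $m_n=(k+1)n+1$ read off the denominator matches the linear coefficient $1+(k+1)n$ appearing in the recurrence for $C_n(x;k+1)$, so that the shift from~\eqref{recuCnx} is applied with the correct parameter (not with $k$, and not against the raw exponent $n+kn+1$). I would also remark that the case $k=1$ recovers~\eqref{eqr}, that Lemma~\ref{lemmaCnk} is the $C_n(x;k+1)$-analogue of the $1/k$-Eulerian differential expression~\eqref{Ankx-def05}, and that, combined with the box sorting expansion~\eqref{ckD} of $(c^kD)^n c$, it will yield an interpretation of $C_n(x;k+1)$ in terms of standard Young tableaux.
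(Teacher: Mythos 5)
Your induction is correct and is precisely the verification the paper has in mind: the text merely asserts that the lemma follows ``by~\eqref{recuCnx}'', and your computation — differentiating the quotient, matching the numerator with the recurrence for $C_{n+1}(x;k+1)$ (parameter $k+1$, coefficient $1+(k+1)n=m_n$), and checking the exponent $m_n+k+1=m_{n+1}$ — supplies exactly the omitted details, with the base case $n=1$ handled correctly (note the identity is indeed only asserted for $n\geqslant 1$, since at $n=0$ the two sides differ). No discrepancies with the paper's approach.
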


By the differential operator method introduced in Section~\ref{sec03},
setting $$T=\frac{x}{(1-x)^k}\frac{\mathrm{d}}{\mathrm{d}x},~a=\frac{x}{1-x},~b=\frac{1}{1-x},$$
we obtain $T(a)=T(b)=ab^{k+1}$.
By Lemma~\ref{lemmaCnk}, we find that for $n\geqslant 1$,
\begin{equation}\label{DGa-Cnk}
D_G^n(a)=D_G^n(b)=b^{kn+1}C_n\left(\frac{a}{b};k\right),~{\text{where $G=\{a\rightarrow ab^k,~b\rightarrow ab^k\}$}}.
\end{equation}
\begin{theorem}
Let $k$ be a given positive integer.
We have
$$C_n(x;k)=\sum_{T \in \SYT(n)} \prod_{i=1}^n \delta_i^{(k)}(T)  x^{\ell(\lambda(T))},$$
where $\ell(\lambda(T))$ is the number of rows of $T$ and $\delta_i^{(k)}(T)$ is defined by
\begin{equation*}
\delta_i^{(k)}(T)=\left\{
  \begin{array}{ll}
   ki-\operatorname{col}_1(T_i)-(k-2), & \hbox{if $i$ is in the first column of $T$;} \\
    \operatorname{col}_{k}(T_i)-\operatorname{col}_{k+1}(T_i)+1, & \hbox{if $i$ is in the $(k+1)$-th column of $T$, where $k\geqslant 1$.}
  \end{array}
\right.
\end{equation*}
\end{theorem}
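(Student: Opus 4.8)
The plan is to run the argument of Theorem~\ref{cnx03} with the operator $c^2D$ replaced by $c^kD$, feeding the master expansion~\eqref{ckD} into a suitably chosen context-free grammar. The grammar of choice is $G'=\{x\rightarrow x,~y\rightarrow x\}$, and one sets $c=y$, so that $c_i=D_{G'}^i(y)=x$ for every $i\geqslant 1$ while $c=y$; specializing $y=1$ at the end will collapse everything to the asserted formula.

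First I would put the grammatical description of $C_n(x;k)$ from~\eqref{DGa-Cnk} into the normal-ordered shape
\[
\left(y^kD_{G'}\right)^n(y)=y^{kn+1}C_n\left(\frac{x}{y};k\right),\qquad G'=\{x\rightarrow x,~y\rightarrow x\}.
\]
This is immediate from~\eqref{DGa-Cnk} once one notices that the formal derivative of $G=\{a\rightarrow ab^k,~b\rightarrow ab^k\}$ splits as $D_G=b^kD_{G'}$: the map $f\mapsto b^kD_{G'}(f)$ is again a derivation, and it agrees with $D_G$ on the two generators (both send $a$ and $b$ to $ab^k$), so the two derivations coincide; hence $D_G^n=(b^kD_{G'})^n$ as operators, and relabeling $a\mapsto x$, $b\mapsto y$ gives the display.

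Next I would invoke~\eqref{ckD} with $c=y$ and $c_i=x$ for $i\geqslant 1$. Because every $c_i$ equals $x$ and $\sum_{i=1}^n w_i(T)=\ell(\lambda(T))$, the product $\prod_{i=1}^n c_i^{w_i(T)}$ simplifies to $x^{\ell(\lambda(T))}$, so~\eqref{ckD} reads
\[
y^{kn+1}C_n\left(\frac{x}{y};k\right)=\sum_{T\in\operatorname{SYT}(n)}\left(\prod_{i=1}^n\delta_i^{(k)}(T)\right)x^{\ell(\lambda(T))}y^{kn+1-\ell(\lambda(T))}.
\]
Setting $y=1$ removes the factor $y^{kn+1-\ell(\lambda(T))}$, turns the left-hand side into $C_n(x;k)$, and produces precisely the claimed identity, with $\delta_i^{(k)}(T)$ being the index introduced with~\eqref{ckD}.

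The one step deserving attention is the same subtlety as in Theorem~\ref{cnx03}: that~\eqref{ckD}, proved by the box sorting algorithm for $c=c(x)$ and $D=\mathrm{d}/\mathrm{d}x$, may legitimately be applied with $c$ a grammar letter and $D$ the formal derivative $D_{G'}$. This is harmless because the box sorting argument is purely formal --- it only records how a derivation acts on the symbols $c_i$ --- so it carries over verbatim with $c_i=D_{G'}^i(c)$. Once that is granted, everything else is a routine substitution, and no real obstacle arises.
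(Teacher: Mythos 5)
Your proposal is correct and follows essentially the same route as the paper: rewrite~\eqref{DGa-Cnk} via the grammar $G'=\{x\rightarrow x,\ y\rightarrow x\}$, set $c=y$ so that $c_i=x$ for all $i\geqslant 1$, apply the master expansion~\eqref{ckD}, and specialize $y=1$. Your explicit justification that $D_G=b^kD_{G'}$ (two derivations agreeing on generators) is a welcome addition the paper leaves implicit, and you correctly invoke~\eqref{ckD} where the paper's proof cites~\eqref{c2D} by a slip.
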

\begin{proof}
Let $G'=\{x\rightarrow x,~y\rightarrow x\}$. We note that~\eqref{DGa-Cnk} can be rewritten as follows:
$$(y^kD_{G'})y=y^{kn+1}C_n\left(\frac{x}{y};k\right).$$
Setting $c=y$, we get $c_i=D_{G'}^i(c)=x$ for all $i\geqslant 1$.
By~\eqref{c2D}, we find that
\begin{align*}
C_n(x;k)&=\sum_{T\in\operatorname{SYT}{(n)}}\left(\prod_{i=1}^n\delta_i^{(k)}(T)c_i^{w_i(T)}\right)c^{kn+1-\ell(\lambda(T))}{\bigg|}_{c=y,~c_i=x,~y=1}\\
&=\sum_{T\in\operatorname{SYT}{(n)}}\prod_{i=1}^n\delta_i^{(k)}(T)x^{w_i(T)},
\end{align*}
which yields the desired formula. This completes the proof.
\end{proof}
\subsection{Narayana polynomials of types $A$ and $B$}
\hspace*{\parindent}

It is well known that Catalan numbers and central binomial coefficients have the following famous expressions (see~\cite{Chen11,Coker03}):  $$C_n:=\frac{1}{n+1}\binom{2n}{n}=\sum_{k=0}^{n-1}\frac{1}{n}\binom{n}{k+1}\binom{n}{k},~\binom{2n}{n}=\sum_{k=0}^{n}{\binom{n}{k}}^2.$$
In~\cite{Fomin03}, Fomin and Zelevinsky defined the (generalized) Narayana numbers $N_k(\Phi)$
for an arbitrary root system $\Phi$ as the entries of the $h$-vector of the simplicial complex
dual to the corresponding generalized associahedron.
Let $N(\Phi,x)=\sum_{k=0}^nN_k(\Phi)x^k$. For the classical Weyl groups, the Narayana polynomials of types $A$ and $B$ are given as follows:
\begin{equation*}
N(A_n,x)=\sum_{k=0}^n\frac{1}{n+1}\binom{n+1}{k+1}\binom{n+1}{k}x^k,~
N(B_n,x)=\sum_{k=0}^n{\binom{n}{k}}^2x^k.
\end{equation*}

The following two results will be used in our discussion.
\begin{lemma}[{\cite[Theorem~9]{Ma1902}}]\label{MMY}
Let $G=\{x\rightarrow x^2y^3,~y\rightarrow x^3y^2\}$.
Then we have
\begin{equation}\label{DGx2}
D_G^n(x^2)=(n+1)!x^{n+2}y^{3n}N\left(A_{n-1},\frac{x^2}{y^2}\right),
\end{equation}
\begin{equation}\label{DGxy}
D_G^n(xy)=n!x^{n+1}y^{3n+1}N\left(B_n,\frac{x^2}{y^2}\right).
\end{equation}
\end{lemma}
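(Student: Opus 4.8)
The plan is to use the homogeneity hard-wired into $G=\{x\to x^2y^3,~y\to x^3y^2\}$ to collapse this two-letter grammar to a one-variable recursion --- much as~\eqref{NA-def} and Theorem~\ref{thmN} collapse the Narayana differential expression --- and then to recognise the resulting recursion as the defining recursion of the Narayana polynomials, up to a factorial. On a monomial one has $D_G(x^ay^b)=a\,x^{a+1}y^{b+3}+b\,x^{a+3}y^{b+1}$, so $D_G$ carries a form of degree $d$ in $x,y$ to one of degree $d+4$ (and, as a byproduct, $D_G(x^2-y^2)=0$). Hence $D_G^n(x^2)$ and $D_G^n(xy)$ are homogeneous of degree $4n+2$, so
\begin{equation*}
D_G^n(x^2)=y^{4n+2}g_n(x/y),\qquad D_G^n(xy)=y^{4n+2}\widetilde g_n(x/y)
\end{equation*}
for one-variable polynomials $g_n,\widetilde g_n$, with $g_0(t)=t^2$ and $\widetilde g_0(t)=t$.

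Since $D_G$ acts on rational functions as the derivation $x^2y^3\tfrac{\partial}{\partial x}+x^3y^2\tfrac{\partial}{\partial y}$, a one-line computation with the chain rule gives, for every one-variable polynomial $g$ and every $d$ (with $t=x/y$),
\begin{equation*}
D_G\bigl(y^{d}g(x/y)\bigr)=y^{d+4}\bigl(t^2(1-t^2)g'(t)+d\,t^3g(t)\bigr).
\end{equation*}
Taking $d=4n+2$ yields $g_{n+1}(t)=t^2(1-t^2)g_n'(t)+(4n+2)t^3g_n(t)$, and the identical recursion for $\widetilde g_n$. An easy induction from this recursion shows $t^{n+2}\mid g_n(t)$ and $t^{n+1}\mid\widetilde g_n(t)$, with quotients polynomial in $t^2$; write $g_n(t)=t^{n+2}h_n(t^2)$ and $\widetilde g_n(t)=t^{n+1}\widetilde h_n(t^2)$, so $h_0=\widetilde h_0=1$.

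Substituting into the $g$-recursion and setting $s=t^2$ converts it into the first-order recursions
\begin{equation*}
h_{n+1}(s)=(n+2+3ns)\,h_n(s)+2s(1-s)\,h_n'(s),
\end{equation*}
\begin{equation*}
\widetilde h_{n+1}(s)=\bigl(n+1+(3n+1)s\bigr)\,\widetilde h_n(s)+2s(1-s)\,\widetilde h_n'(s).
\end{equation*}
Unwinding the substitutions, $D_G^n(x^2)=x^{n+2}y^{3n}h_n(x^2/y^2)$ and $D_G^n(xy)=x^{n+1}y^{3n+1}\widetilde h_n(x^2/y^2)$ (genuine polynomial identities, since the recursions force $\deg h_n,\deg\widetilde h_n\leqslant n$), so~\eqref{DGx2} and~\eqref{DGxy} are equivalent to $h_n(s)=(n+1)!\,N(A_{n-1},s)$ for $n\geqslant1$ and $\widetilde h_n(s)=n!\,N(B_n,s)$ for $n\geqslant0$.

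I would finish by induction on $n$. The base cases $h_1(s)=2=2!\,N(A_0,s)$ and $\widetilde h_0(s)=1=0!\,N(B_0,s)$ are immediate from the recursions, and, after dividing out the factorial, the inductive steps reduce exactly to the production-type recursions
\begin{equation*}
(n+2)N(A_n,s)=(n+2+3ns)N(A_{n-1},s)+2s(1-s)\tfrac{\partial}{\partial s}N(A_{n-1},s),
\end{equation*}
\begin{equation*}
(n+1)N(B_{n+1},s)=\bigl(n+1+(3n+1)s\bigr)N(B_n,s)+2s(1-s)\tfrac{\partial}{\partial s}N(B_n,s)
\end{equation*}
for the Narayana polynomials of types $A$ and $B$. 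Verifying these two recursions is the only genuinely computational ingredient, and it is what I expect to be the main obstacle: each is obtained by extracting the coefficient of $s^k$ on both sides and applying Pascal's identity --- for type $B$ directly from $N(B_n,s)=\sum_k\binom nk^2 s^k$, and for type $A$ from the product formula $N(A_{n-1},s)=\sum_k\frac1n\binom{n}{k+1}\binom nk s^k$, which turns the type $A$ step into a short Vandermonde--Pascal manipulation. Once both recursions are established the induction closes, and~\eqref{DGx2} and~\eqref{DGxy} follow.
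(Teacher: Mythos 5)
Your proposal is correct. Note first that the paper itself offers no proof of this lemma: it is imported verbatim as \cite[Theorem~9]{Ma1902}, where it is established by grammatical/combinatorial means. Your argument is therefore a genuinely independent, self-contained analytic verification, and every step checks out. The derivation $D_G\bigl(y^{d}g(x/y)\bigr)=y^{d+4}\bigl(t^2(1-t^2)g'(t)+d\,t^3g(t)\bigr)$ is right, the divisibility $t^{n+2}\mid g_n$ and $t^{n+1}\mid\widetilde g_n$ with quotients in $t^2$ does propagate through the recursion, and the two resulting recursions for $h_n$ and $\widetilde h_n$ are exactly as you state. The ``only genuinely computational ingredient'' you flag also goes through: extracting the coefficient of $s^k$ reduces the type $A$ recursion to the polynomial identity $n(n+1)(n+2)=(n+2+2k)(n-k)(n+1-k)+(3n-2k+2)k(k+1)$ and the type $B$ recursion to $(n+1)^3=(n+1+2k)(n+1-k)^2+(3n-2k+3)k^2$, both of which are immediate to verify, so the inductions close. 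Compared with the cited source's labeling-style proof, your route buys a short, purely formal verification at the cost of the combinatorial meaning of the coefficients; it also makes transparent why the factorials $(n+1)!$ and $n!$ appear (they are forced by the constant terms $n+2$ and $n+1$ in the two recursions). One tiny presentational remark: it would be worth stating explicitly that $\deg h_n\leqslant n$ guarantees $y^{3n}h_n(x^2/y^2)$ and $y^{3n+1}\widetilde h_n(x^2/y^2)$ are honest polynomials in $x,y$, which you do implicitly via the parenthetical about degrees.
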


\begin{lemma}[{\cite[Theorem~10]{Ma131}}]\label{MMY02}
Let $G=\{x\rightarrow xy^2,~y\rightarrow x^2y\}$. For any $n\geqslant 1$, we have
\begin{equation}\label{DGx202}
D_G^n(x^2)=2^ny^{2n+2}A_n\left(\frac{x^2}{y^2}\right),
\end{equation}
\begin{equation}\label{DGxy02}
D_G^n(xy)=xy^{2n+1}B_n\left(\frac{x^2}{y^2}\right),
\end{equation}
where $A_n(x)$ and $B_n(x)$ are the types $A$ and $B$ Eulerian polynomials.
\end{lemma}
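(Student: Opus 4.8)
The plan is to reduce both identities to grammars already treated in the paper. For~\eqref{DGxy02}, the key observation is that $G=\{x\to xy^2,\ y\to x^2y\}$ is, after renaming the symbols $a\mapsto x$ and $b\mapsto y$, literally the grammar $\{a\to ab^2,\ b\to a^2b\}$ of Example~\ref{ex-Bn}; since a context-free grammar and its formal derivative are purely symbolic, such a relabelling is harmless, and~\eqref{DG3ab} then reads $D_G^n(xy)=xy^{2n+1}B_n(x^2/y^2)$, which is exactly~\eqref{DGxy02}. So the type $B$ half needs no new work.

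For~\eqref{DGx202} I would pass to the polynomial subring $R=\mathbb{Q}[x^2,y^2]$. The computation to run first is $D_G(x^2)=2x\,D_G(x)=2x\cdot xy^2=2x^2y^2$ and $D_G(y^2)=2y\,D_G(y)=2y\cdot x^2y=2x^2y^2$. Hence $R$ is stable under $D_G$, and since $x^2$ and $y^2$ are algebraically independent, $R$ is a polynomial ring in the generators $a:=x^2$, $b:=y^2$ on which $D_G$ restricts to the derivation determined by $a\mapsto 2ab$ and $b\mapsto 2ab$. That restriction is precisely $2D_{G'}$, where $G'=\{a\to ab,\ b\to ab\}$ is Dumont's grammar of Proposition~\ref{grammar03}, because two derivations of a polynomial ring agreeing on the generators coincide. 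As $2$ is a scalar, the restriction of $D_G^n$ to $R$ equals $2^n D_{G'}^n$, and Proposition~\ref{grammar03} then yields
\[
D_G^n(x^2)=2^n D_{G'}^n(a)=2^n b^{n+1}A_n\!\left(\frac{a}{b}\right)=2^n y^{2n+2}A_n\!\left(\frac{x^2}{y^2}\right),
\]
which is~\eqref{DGx202}.

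I do not expect a real obstacle: the only steps deserving a line of justification are that $D_G$ preserves $\mathbb{Q}[x^2,y^2]$ and that rescaling a derivation by the scalar $2$ rescales its $n$-th power by $2^n$, and both are routine. As a sanity check, $D_G(x^2)=2x^2y^2=2y^4A_1(x^2/y^2)$ since $A_1(x)=x$. If one wishes to avoid the change of variables altogether,~\eqref{DGx202} can be obtained by a direct induction on $n$: applying $D_G$ to $2^{n-1}y^{2n}A_{n-1}(x^2/y^2)$, using $D_G(y^{2n})=2nx^2y^{2n}$ and $D_G(x^2/y^2)=2x^2(1-x^2/y^2)$, gives $2^n y^{2n+2}\bigl[n t\,A_{n-1}(t)+t(1-t)A_{n-1}'(t)\bigr]$ with $t=x^2/y^2$, which equals $2^n y^{2n+2}A_n(t)$ by the Eulerian recurrence~\eqref{Eulerian01}.
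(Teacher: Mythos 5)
Your argument is correct, and it is worth pointing out that the paper itself offers no proof of this lemma: it is quoted verbatim from \cite[Theorem~10]{Ma131}. What you give is therefore a self-contained derivation inside the paper's own framework. For \eqref{DGxy02}, the observation that $G$ is Example~\ref{ex-Bn}'s grammar $\{a\rightarrow ab^2,\,b\rightarrow a^2b\}$ up to renaming the letters is exactly right, and \eqref{DG3ab} transfers verbatim; the one point deserving a word is that the paper obtains \eqref{DG3ab} by evaluating at $a=x/\sqrt{1-x^2}$, $b=1/\sqrt{1-x^2}$, which satisfy the relation $b^2-a^2=1$, so to read \eqref{DG3ab} as a \emph{formal} identity in the letters $a,b$ (which is what your renaming argument needs) one should note that both sides are homogeneous of degree $2n+2$, and a nonzero homogeneous polynomial cannot lie in the ideal generated by $b^2-a^2-1$; alternatively one can reprove \eqref{DG3ab} by induction from the type $B$ recurrence, exactly as you do for the type $A$ half. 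For \eqref{DGx202}, your reduction to Dumont's grammar via the $D_G$-stable subring $\mathbb{Q}[x^2,y^2]$, on which $D_G$ restricts to twice the derivation of Proposition~\ref{grammar03}, is clean and is precisely an instance of the ``change of grammars'' the paper advertises; the fallback induction you sketch is also correct (I verified $D_G(y^{2n})=2nx^2y^{2n}$ and $D_G(x^2/y^2)=2x^2(1-x^2/y^2)$, after which \eqref{Eulerian01} closes the induction). Either route would serve as a legitimate replacement for the external citation.
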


We can now give the following result.
\begin{theorem}
The Narayana polynomials of types $A$ and $B$ have the following expansions:
\begin{align*}
(n+1)!N\left(A_{n-1},x\right)&=\sum_{T\in\operatorname{SYT}{(n)}}\left(\prod_{i=1}^n\sigma_i(T)\right)\left(\prod_{i=1}^n{~i!\sum_{j\geqslant 0}\binom{i+1}{2j-i}x^{j-\frac{i}{2}}}\right)^{w_i(T)}x^{\frac{n}{2}-\ell(\lambda(T))},\\
n!N\left(B_n,x\right)&=\sum_{T\in\operatorname{SYT}{(n)}}\left(\prod_{i=1}^n\delta_i(T)c_i^{w_i(T)}\right)
x^{\frac{n}{2}-\frac{1}{2}\ell(\lambda(T))},
\end{align*}
where $c_{2i-1}=4^{i-1}(x+1),~c_{2i}=4^i\sqrt{x}$, $\sigma_i(T)$ is the box sorting index and $\delta_i(T)$ is the second-order box sorting index.
\end{theorem}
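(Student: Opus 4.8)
The plan is to feed the grammar $G=\{x\to x^2y^3,\ y\to x^3y^2\}$ of Lemma~\ref{MMY} into the two master expansions of Section~\ref{sec05}: formula \eqref{cdnc-Thm} for $(cD)^nc$ will produce the type~$A$ identity (it carries the box sorting indices $\sigma_i$), and formula \eqref{c2D} for $(c^2D)^nc$ will produce the type~$B$ identity (it carries the second-order box sorting indices $\delta_i$). The key first observation is that this formal derivative factors through simpler derivations: for every $f$,
\[
D_G(f)=\frac{\partial f}{\partial x}\,x^2y^3+\frac{\partial f}{\partial y}\,x^3y^2=x^2D_{G''}(f)=(xy)^2D_{G'}(f),
\]
where $G''=\{x\to y^3,\ y\to xy^2\}$ and $G'=\{x\to y,\ y\to x\}$; hence $D_G^n=(x^2D_{G''})^n=((xy)^2D_{G'})^n$ as operators, so that $D_G^n(x^2)=(x^2D_{G''})^n(x^2)$ is an instance of $(cD)^nc$ with $c=x^2$, $D=D_{G''}$, and $D_G^n(xy)=((xy)^2D_{G'})^n(xy)$ is an instance of $(c^2D)^nc$ with $c=xy$, $D=D_{G'}$.

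First I would apply \eqref{cdnc-Thm} to $D_G^n(x^2)$ and \eqref{c2D} to $D_G^n(xy)$, and then use Lemma~\ref{MMY} to rewrite the left-hand sides; this gives
\[
(n+1)!\,x^{n+2}y^{3n}N\!\Bigl(A_{n-1},\tfrac{x^2}{y^2}\Bigr)=\sum_{T\in\SYT(n)}\Bigl(\prod_{i=1}^n\sigma_i(T)\,c_i^{w_i(T)}\Bigr)(x^2)^{n+1-\ell(\lambda(T))},
\]
\[
n!\,x^{n+1}y^{3n+1}N\!\Bigl(B_n,\tfrac{x^2}{y^2}\Bigr)=\sum_{T\in\SYT(n)}\Bigl(\prod_{i=1}^n\delta_i(T)\,c_i^{w_i(T)}\Bigr)(xy)^{2n+1-\ell(\lambda(T))},
\]
with $c_i=D_{G''}^i(x^2)$ in the first line and $c_i=D_{G'}^i(xy)$ in the second (that \eqref{cdnc-Thm} and \eqref{c2D} hold for an arbitrary derivation, their proofs using only the Leibniz rule, is exploited throughout Section~\ref{sec06}).

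The next step is to evaluate these $c_i$ in closed form. For $G'=\{x\to y,\ y\to x\}$ a one-line induction gives $D_{G'}^{2i}(xy)=4^ixy$ and $D_{G'}^{2i-1}(xy)=4^{i-1}(x^2+y^2)$. For $G''=\{x\to y^3,\ y\to xy^2\}$ I would first note that $D_{G''}$ raises total degree by $2$ and flips the parity of the $x$-exponent, so that writing $D_{G''}^i(x^2)=\sum_m a_{i,m}\,x^my^{2i+2-m}$ the production rules give the recurrence $a_{i+1,m}=(m+1)a_{i,m+1}+(2i+3-m)a_{i,m-1}$ starting from $a_{0,m}=\delta_{m,2}$. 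I would then prove by induction on $i\ge1$ that $a_{i,m}=i!\binom{i+1}{m}$ when $m\equiv i\pmod2$ and $a_{i,m}=0$ otherwise; the inductive step is the identity $(m+1)\binom{i+1}{m+1}+(2i+3-m)\binom{i+1}{m-1}=(i+1)\binom{i+2}{m}$, which reduces to $(i+1-m)(i+2-m)+m(2i+3-m)=(i+1)(i+2)$. Thus $D_{G''}^i(x^2)=i!\sum_m\binom{i+1}{m}x^my^{2i+2-m}$.

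Finally I would specialize $y=1$ and then replace the grammar variable $x$ by $\sqrt{x}$ (so that the Narayana argument $x^2/y^2$ becomes $x$) and collect exponents. In the type~$A$ line this turns $c_i$ into $i!\sum_{j\ge0}\binom{i+1}{2j-i}x^{j-i/2}$, turns $(x^2)^{n+1-\ell}$ into $x^{n+1-\ell}$, and turns the prefactor into $x^{(n+2)/2}$; dividing through and using $n+1-\tfrac{n+2}{2}=\tfrac n2$ gives the stated formula for $(n+1)!N(A_{n-1},x)$. In the type~$B$ line it turns $c_{2i}$ into $4^i\sqrt{x}$, $c_{2i-1}$ into $4^{i-1}(x+1)$, turns $(xy)^{2n+1-\ell}$ into $x^{(2n+1-\ell)/2}$, and turns the prefactor into $x^{(n+1)/2}$; dividing through and using $\tfrac{2n+1-\ell}{2}-\tfrac{n+1}{2}=\tfrac{n-\ell}{2}$ gives the stated formula for $n!N(B_n,x)$. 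The only genuinely non-routine ingredient is the closed-form evaluation of $D_{G''}^i(x^2)$ for the type~$A$ part — solving the two-variable recurrence for $a_{i,m}$ and verifying the binomial identity above — since the type~$B$ coefficients are essentially free and everything else is the specialization mechanism already used for the Ramanujan, Andr\'e and Eulerian expansions.
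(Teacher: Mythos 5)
Your proposal is correct and takes essentially the same route as the paper: both arguments factor the grammar of Lemma~\ref{MMY} as $D_G=x^2D_{G''}$ for the type~$A$ case and as $(xy)^2D_{G'}$ for the type~$B$ case, feed these into the master expansions \eqref{cdnc-Thm} and \eqref{c2D} with $c=x^2$ and $c=xy$ respectively, evaluate the $c_i$ in closed form, and specialize $y=1$, $x\mapsto\sqrt{x}$. The only difference is that you work out in full the induction for $D_{G''}^i(x^2)$ (the recurrence for $a_{i,m}$ and the binomial identity), a step the paper dismisses as routine.
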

\begin{proof}
(A) Let $G'=\{x\rightarrow y^3,~y\rightarrow xy^2\}$. Then~\eqref{DGx2} can be restated as follows:
$$\left(x^2D_{G'}\right)^nx^2=(n+1)!x^{n+2}y^{3n}N\left(A_{n-1},\frac{x^2}{y^2}\right).$$
Set $c=x^2$. By induction, it is routine to verify that
$$c_i=D_{G'}^i(c)=D_{G'}^i(x^2)=i!\sum_{j\geqslant 0}\binom{i+1}{2j-i}x^{2j-i}y^{3i-2j+2}~{\text{for $i\geqslant 1$}}.$$
where $\binom{i+1}{2j-i}$ counts binary words of length $i$ with $j$ strictly increasing runs (see~\cite[A119900]{Sloane}).
It follows from~\eqref{cdnc-Thm} that
\begin{align*}
&(n+1)!x^{n+2}N\left(A_{n-1},x^2\right)\\&=\sum_{T\in\operatorname{SYT}{(n)}}\left(\prod_{i=1}^n\sigma_i(T)c_i^{w_i(T)}\right)c^{n+1-\ell(\lambda(T))}{\bigg|}_{c=x^2,~c_i=i!\sum_{j\geqslant 0}\binom{i+1}{2j-i}x^{2j-i}}\\
&=\sum_{T\in\operatorname{SYT}{(n)}}\left(\prod_{i=1}^n\sigma_i(T)\right)\left(\prod_{i=1}^n{~i!\sum_{j\geqslant 0}\binom{i+1}{2j-i}x^{2j-i}}\right)^{w_i(T)}x^{2n+2-2\ell(\lambda(T))}.
\end{align*}
After simplifying, we get the desired formula.

(B) Let $G''=\{x\rightarrow y,~y\rightarrow x\}$. Then~\eqref{DGxy} can be restated as follows:
$$\left(x^2y^2D_{G''}\right)^nxy=n!x^{n+1}y^{3n+1}N\left(B_n,\frac{x^2}{y^2}\right).$$
Setting $c=xy$, it is clear that for any $i\geqslant 1$, we have
$$c_{2i-1}=D_{G''}^{2i-1}(c)=D_{G''}^{2i-1}(xy)=4^{i-1}(x^2+y^2),$$
$$c_{2i}=D_{G''}^{2i}(c)=D_{G''}\left(4^{i-1}(x^2+y^2)\right)=4^{i}xy.$$
By~\eqref{c2D}, we obtain that
\begin{equation*}
n!x^{n+1}N\left(B_n,x^2\right)=\sum_{T\in\operatorname{SYT}{(n)}}\left(\prod_{i=1}^n\delta_i(T)c_i^{w_i(T)}\right)
x^{2n+1-\ell(\lambda(T))}{\bigg|}_{{\substack{c_{2i-1}=4^{i-1}(x^2+1)\\c_{2i}=4^ix}}},
\end{equation*}
which yields the desired result.
This completes the proof.
\end{proof}
\renewcommand{\arraystretch}{2}
\begin{center}
\begin{table}[ht!]
  \caption{An illustration of $3!N(B_3,x)=6(1+9x+9x^2+x^3)$.}\label{tab:dummy-07BB}
  {
\begin{tabular}{ccccc|c}
$T$&&${\sigma_i(T)},~w_i(T)$&&&enumerator\\
\hline
    \begin{ytableau}
    *(gray!20) 3\\
    *(gray!20) 2\\
    *(gray!20) 1\\
    *(gray!20) 0 & 1 & 2 &3  \\
    \end{ytableau}
    & $\Longrightarrow$
    &
   $\substack{\sigma_1(T)=\sigma_2(T)=\sigma_3(T)=1\\ w_1(T)=w_2(T)=0,~w_3(T)=1}$
    & &
    &$1+23x+23x^2+x^3$\\
\hline
    \begin{ytableau}
    *(gray!20) 3\\
    *(gray!20) 2\\
    *(gray!20) 1 & 2\\
    *(gray!20) 0 & 1 & 3  \\
    \end{ytableau}
    & $\Longrightarrow$
    &
  $\substack{\sigma_1(T)=\sigma_2(T)=1,~\sigma_3(T)=2\\w_1(T)=w_2(T)=1,~w_3(T)=0}$
    &
    &
    &$2(1+x)(1+6x+x^2)$\\
\hline
    \begin{ytableau}
    *(gray!20) 3\\
    *(gray!20) 2\\
    *(gray!20) 1 & 3\\
    *(gray!20) 0 & 1 & 2  \\
    \end{ytableau}
    & $\Longrightarrow$
    &
   $\substack{\sigma_1(T)=\sigma_2(T)=1,~\sigma_3(T)=2\\w_1(T)=w_2(T)=1,~w_3(T)=0}$
    &
    &
    &$2(1+x)(1+6x+x^2)$\\
\hline
    \begin{ytableau}
    *(gray!20) 3\\
    *(gray!20) 2 & 3\\
    *(gray!20) 1 & 2\\
    *(gray!20) 0 & 1\\
    \end{ytableau}
    & $\Longrightarrow$
    &
  $\substack{\sigma_1(T)=\sigma_2(T)=\sigma_3(T)=1\\w_1(T)=3,~w_2(T)=w_3(T)=0}$
    &
    &
    &$(1+x)^3$\\
    \hline
\end{tabular}}
\end{table}
\end{center}
We end this paper by giving the following result, which gives a deep connection between the type $B$ Narayana polynomials and the type $B$ Eulerian polynomials.
\begin{theorem}\label{NBthm}
Let $N\left(B_n,{x}\right)$ be the type $B$ Narayana polynomials, and let $B_n(x)$ be the type $B$ Eulerian polynomials.
Then we have
$$n!N\left(B_n,{x}\right)=\sum_{T\in\operatorname{SYT}{(n)}}\prod_{i=1}^n\sigma_i(T)\left(B_i(x)\right)^{w_i(T)}.$$
\end{theorem}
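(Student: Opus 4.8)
The plan is to run the same machine as in the preceding applications of this section: realize the grammar of Lemma~\ref{MMY} as a ``$cD$''-operator whose underlying derivation is the grammar of Lemma~\ref{MMY02}, and then read the expansion off from Theorem~\ref{TSYTncol}. Write $G_1=\{x\rightarrow x^2y^3,~y\rightarrow x^3y^2\}$ for the grammar in Lemma~\ref{MMY} and $G_2=\{x\rightarrow xy^2,~y\rightarrow x^2y\}$ for the one in Lemma~\ref{MMY02}. First I would verify the operator identity $D_{G_1}=xy\,D_{G_2}$ on $\mathbb{Q}[x,y]$: both sides are derivations (the right-hand side is $xy$ times a derivation, hence itself a derivation), and they agree on the generators since $D_{G_1}(x)=x^2y^3=xy\cdot xy^2$ and $D_{G_1}(y)=x^3y^2=xy\cdot x^2y$; therefore $D_{G_1}^n=(xy\,D_{G_2})^n$. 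Setting $c=xy$, the relation~\eqref{DGxy} of Lemma~\ref{MMY} becomes
$$\left(cD_{G_2}\right)^n(c)=n!\,x^{n+1}y^{3n+1}N\!\left(B_n,\tfrac{x^2}{y^2}\right).$$

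Next I would use Lemma~\ref{MMY02} (applied to $G_2$) to compute the iterated $D_{G_2}$-derivatives of $c$: by~\eqref{DGxy02} we have $c_i:=D_{G_2}^i(xy)=xy^{2i+1}B_i(x^2/y^2)$ for all $i\geqslant0$ (the case $i=0$ being $c_0=c=xy$, consistent with $B_0=1$). Feeding these into the expansion~\eqref{cdnc-Thm} of Theorem~\ref{TSYTncol},
$$\left(cD_{G_2}\right)^n(c)=\sum_{T\in\operatorname{SYT}{(n)}}\left(\prod_{i=1}^n\sigma_i(T)\,c_i^{w_i(T)}\right)c^{\,n+1-\ell(\lambda(T))},$$
and substituting $c=xy$ and $c_i=xy^{2i+1}B_i(x^2/y^2)$, the pure-monomial part on the right collapses to something independent of $T$: using $\sum_{i\geqslant1}w_i(T)=\ell(\lambda(T))$ and $\sum_{i\geqslant1}i\,w_i(T)=n$, the total power of $x$ is $\ell(\lambda(T))+\bigl(n+1-\ell(\lambda(T))\bigr)=n+1$, and the total power of $y$ is $\sum_{i\geqslant1}(2i+1)w_i(T)+\bigl(n+1-\ell(\lambda(T))\bigr)=2n+\ell(\lambda(T))+n+1-\ell(\lambda(T))=3n+1$. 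Hence the right-hand side equals $x^{n+1}y^{3n+1}\sum_{T}\prod_{i=1}^n\sigma_i(T)\,B_i(x^2/y^2)^{w_i(T)}$.

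Finally I would equate this with $n!\,x^{n+1}y^{3n+1}N(B_n,x^2/y^2)$, cancel the common factor $x^{n+1}y^{3n+1}$, and substitute the indeterminate $x^2/y^2\mapsto x$ (equivalently, set $y=1$ and rename $x^2$ as $x$) to obtain $n!\,N(B_n,x)=\sum_{T\in\operatorname{SYT}{(n)}}\prod_{i=1}^n\sigma_i(T)\,(B_i(x))^{w_i(T)}$, which is the claim. I do not expect a genuine obstacle: the only points requiring care are the operator identity $D_{G_1}=xy\,D_{G_2}$ and the exponent count in the monomial collapse, both of which are routine, and the whole content of the theorem is carried by Lemmas~\ref{MMY} and~\ref{MMY02} together with Theorem~\ref{TSYTncol}. (An entirely parallel argument, using instead $D_{G_1}=(xy)^2 D_{G_0}$ with $G_0=\{x\rightarrow y,~y\rightarrow x\}$ and the second-order expansion~\eqref{c2D}, produces the companion formula for $n!\,N(B_n,x)$ in terms of the $\delta_i(T)$ stated just above.)
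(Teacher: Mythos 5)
Your proposal is correct and follows essentially the same route as the paper: the paper also restates \eqref{DGxy} as $\left(xyD_{G'''}\right)^n(xy)=n!\,x^{n+1}y^{3n+1}N\bigl(B_n,\tfrac{x^2}{y^2}\bigr)$ with $G'''=\{x\rightarrow xy^2,\ y\rightarrow x^2y\}$, takes $c=xy$ and $c_i=xy^{2i+1}B_i(x^2/y^2)$ from \eqref{DGxy02}, and substitutes into \eqref{cdnc-Thm}. Your explicit verification of $D_{G_1}=xy\,D_{G_2}$ on generators and the exponent bookkeeping merely spell out steps the paper leaves implicit.
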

\begin{proof}
Let $G'''=\{x\rightarrow xy^2,~y\rightarrow x^2y\}$. Then~\eqref{DGxy} can be restated as follows:
$$\left(xyD_{G'''}\right)^nxy=n!x^{n+1}y^{3n+1}N\left(B_n,\frac{x^2}{y^2}\right).$$
Setting $c=xy$, by~\eqref{DGxy02}, we get
\begin{equation*}
c_i=D_{G'''}^i(c)=D_{G'''}^i(xy)=xy^{2i+1}B_i\left(\frac{x^2}{y^2}\right).
\end{equation*}
It follows from~\eqref{cdnc-Thm} that
\begin{align*}
n!x^{n+1}N\left(B_n,{x^2}\right)&=\sum_{T\in\operatorname{SYT}{(n)}}\left(\prod_{i=1}^n\sigma_i(T)c_i^{w_i(T)}\right)c^{n+1-\ell(\lambda(T))}{\bigg|}_{c=x,~c_i=xB_i(x^2)}\\
&=\sum_{T\in\operatorname{SYT}{(n)}}\left(\prod_{i=1}^n\sigma_i(T)c_i^{w_i(T)}\right)x^{n+1-\ell(\lambda(T))}{\bigg|}_{c_i=xB_i(x^2)},
\end{align*}
and so we arrive at
$$n!N\left(B_n,{x}\right)=\sum_{T\in\operatorname{SYT}{(n)}}\prod_{i=1}^n\sigma_i(T)\left(B_i(x)\right)^{w_i(T)}.$$
This completes the proof.
\end{proof}

The first few type $B$ Eulerian polynomials are listed below:
$$B_1(x)=1+x,~B_2(x)=1+6x+x^2,~B_3(x)=1+23x+23x^2+x^3.$$
In Table~\ref{tab:dummy-07BB}, we give an illustration of Theorem~\ref{NBthm}.
\bibliographystyle{amsplain}

\begin{thebibliography}{69}
\bibitem{Adin19}
R.M. Adin, S. Elizalde, Y. Roichman, \textit{Cyclic descents for near-hook and two-row shapes}, European J. Combin., 79 (2019), 152--178.

\bibitem{Agapito14}
J. Agapito, \textit{On symmetric polynomials with only real zeros and nonnegative $\gamma$-vectors}, Linear Algebra Appl., 451 (2014), 260--289.

\bibitem{Agapito21}
J. Agapito, P. Petrullo, D. Senato, M.M. Torres, \textit{Eulerian polynomials via the Weyl algebra action}, J Algebr Comb., 54 (2021), 457--473.


\bibitem{Benkart98}
G. Benkart, T. Roby, \textit{Down-up algebras}, J. Algebra, 209(1) (1998), 305--344.

\bibitem{Bergeron92}
F. Bergeron, P. Flajolet, B. Salvy, \textit{Varieties of increasing trees}, Lecture Notes in Comput. Sci., 581 (1992), 24--48.



\bibitem{Blasiak10}
P. Blasiak, P. Flajolet, \textit{Combinatorial models of creation-annihilation}, S\'em. Lothar. Combin., 65 (2010/12), Art. B65c, 78 pp.


\bibitem{Bona08}
M. B\'ona, \textit{Real zeros and normal distribution for statistics on Stirling permutations defined by Gessel and Stanley}, SIAM J. Discrete Math., \textbf{23} (2008/09), 401--406.


\bibitem{Branden08}
P. Br\"and\'{e}n, \textit{Actions on permutations and unimodality of descent polynomials}, European J. Combin., 29 (2008), 514--531.

\bibitem{Bre94}
F. Brenti, \textit{$q$-Eulerian polynomials arising from Coxeter groups}, European J. Combin., \textbf{15} (1994), 417--441.

\bibitem{Brenti00}
F. Brenti, \textit{A class of $q$-symmetric functions arising from plethysm},
J. Combin. Theory Ser. A, \textbf{91} (2000), 137--170.

\bibitem{Briand20}
E. Briand, S. Lopes, M. Rosas, \textit{Normally ordered forms of powers of differential operators and their combinatorics}, J. Pure Appl. Algebra, 224(8) (2020), 106312.

\bibitem{Buckholtz}
J.D. Buckholtz, \textit{Concerning an approximation of Copson}, Proc. Amer. Math. Soc., \textbf{14} (1963), 564--568.



\bibitem{Carlitz78}
L. Carlitz and V.E. Hoggatt, \textit{Generalized Eulerian numbers and polynomials}, Fibonacci Quart., 16(2) (1978), 138--146.


\bibitem{Cayley57}
A. Cayley, \textit{On the theory of the analytical forms called trees}, Phil. Mag., 13 (1857), 172--176.


\bibitem{Chao19}
T.-W. Chao, J. Ma, S.-M. Ma, Y.-N. Yeh, \textit{$1/k$-Eulerian polynomials and $k$-inversion sequences}, Electron. J. Combin., 26(3) (2019), P3.35.

\bibitem{Chen93}
W.Y.C. Chen, \textit{Context-free grammars, differential operators and formal
power series}, Theoret. Comput. Sci., 117 (1993), 113--129.



\bibitem{Chen17}
W.Y.C. Chen, A.M. Fu, \textit{Context-free grammars for permutations and increasing trees}, Adv. in Appl. Math., 82 (2017), 58--82.


\bibitem{Chen22}
W.Y.C. Chen, A.M. Fu, \textit{A context-free grammar for the $e$-positivity of the trivariate second-order Eulerian polynomials},
Discrete Math., \textbf{345}(1) (2022), 112661.

\bibitem{Chen2301}
W.Y.C. Chen, A.M. Fu, \textit{The Dumont ansatz for the Eulerian polynomials, peak polynomials and derivative polynomials},
Ann. Combin., 27(3) (2023), 707--735.

\bibitem{Chen23}
W.Y.C. Chen, A.M. Fu, S.H.F. Yan, \textit{A grammatical calculus for peaks and runs of permutations}, J. Algebraic Combin., 57(4) (2023), 1139--1162.

\bibitem{Chen11}
W.Y.C. Chen, A.Y.Z. Wang and A.F.Y. Zhao, \textit{Identities derived from noncrossing partitions of type $B$}, Electron. J. Combin., 18 (2011), \#P129.

\bibitem{ChenYang21}
W.Y.C. Chen, H.R.L. Yang, \textit{A context-free grammar for
the Ramanujan-Shor polynomials}, Adv. in Appl. Math., 126 (2021), 101908.


\bibitem{Coker03}
C. Coker, \textit{Enumerating a class of lattice paths}, Discrete Math., \textbf{271} (2003), 13--28.

\bibitem{Comtet73}
L. Comtet, \textit{Une formule explicite pour les puissances successives de l'operateur de d\'{e}rivations de Lie},
C. R. Hebd. Seances Acad. Sci., 276 (1973), 165--168.


\bibitem{Egge10}
E.S. Egge, \textit{Legendre-Stirling permutations}, European J. Combin., 31(7) (2010), 1735--1750.

\bibitem{Mansour10}
B.S. El-Desouky, N.P. Caki\'c, T. Mansour, \textit{Modified approach to generalized Stirling numbers via differential operators}, Appl. Math. Lett., 23 (2010), 115--120.

\bibitem{Dumont80}
D. Dumont, \textit{Une g\'en\'eralisation trivari\'ee sym\'etrique des nombres eul\'eriens}, J. Combin. Theory Ser. A, \textbf{28} (1980), 307--320.

\bibitem{Dumont96}
D. Dumont, \textit{Grammaires de William Chen et d\'erivations dans les arbres et
arborescences}, S\'em. Lothar. Combin., 37, Art. B37a (1996), 1--21.

\bibitem{Dumont9602}
D. Dumont, A. Ramamonjisoa, \textit{Grammaire de Ramanujan et arbres de Cayley}, Electron. J. Combin., 3 (1996), R17.




\bibitem{Dzhumadil14}
A. Dzhumadil'daev, D. Yeliussizov, \textit{Stirling permutations on multisets}, Europ. J. Combin., 36 (2014), 377--392.
%
\bibitem{Eu17}
S.-P. Eu, T.-S. Fu, Y.-C. Liang, T.-L. Wong, \textit{On $xD$-generalizations of Stirling numbers and Lah numbers via graphs and rooks}, Electron. J. Combin., 24(2) (2017), P2.9.

\bibitem{Everitt02}
W.N. Everitt, L.L. Littlejohn, R. Wellman,
Legendre polynomials, \textit{Legendre-Stirling numbers, and the left-definite analysis of the Legendre differential expression}, J. Comput. Appl. Math., 148 (1) (2002), 213--238.

\bibitem{Foata73}
D. Foata, M.-P. Sch\"utzenberger, \textit{Nombres d'Euler et permutations alternantes},
in: A survey of combinatorial theory, ed. J.N. Srivastava et al., North-Holland, Amsterdam, 1973, 173--187.

\bibitem{Foata01}
D. Foata, G.-N. Han, \textit{Arbres minimax et polyn\^{o}mes d'Andr\'e}, Adv. in Appl. Math., 27 (2001), 367--389.

\bibitem{Fomin03}
S.V. Fomin, A.V. Zelevinsky, \textit{$Y$-systems and generalized associahedra}, Ann. Math., \textbf{158} (2003), 977--1018.

\bibitem{Gessel78}
I. Gessel, R.P. Stanley, \textit{Stirling polynomials}, J. Combin. Theory Ser. A, \textbf{24} (1978), 25--33.

\bibitem{Gessel20}
I. Gessel, Y. Zhuang, \textit{Plethystic formulas for permutation enumeration}, Adv. Math., 375 (2020), 107370.

\bibitem{Solus20}
N. Gustafsson, L. Solus, \textit{Derangements, Ehrhart theory, and local $h$-polynomials}, Adv. Math., 369 (2020), 107169.

\bibitem{Haglund12}
J. Haglund, M. Visontai, \textit{Stable multivariate Eulerian polynomials and
generalized Stirling permutations}, European J. Combin., \textbf{33} (2012), 477--487.


\bibitem{Han23}
G.-N. Han, S.-M. Ma, \textit{Eulerian polynomials and the $g$-indices of Young tableaux}, Proc. Amer. Math. Soc., DOI: https://doi.org/10.1090/proc/16650.

\bibitem{Hivert06}
F. Hivert, J.-C. Novelli, J.-Y. Thibon, \textit{Multivariate generalizations of the Foata-Sch\"utzenberger equidistribution}, DMTCS proc. AG (2006), 289--300.

\bibitem{Huang20}
B. Huang, \textit{Cyclic descents for general skew tableaux}, J. Comb. Theory, Ser. A, 169 (2020), 105120.


\bibitem{Hwang20}
H.-K. Hwang, H.-H. Chern, G.-H. Duh, Guan-Huei, \textit{An asymptotic distribution theory for Eulerian recurrences with applications},
Adv. in Appl. Math., 112 (2020), 101960.

\bibitem{Janson08}
S. Janson, \textit{Plane recursive trees, Stirling permutations and an urn model},
Proceedings of Fifth Colloquium on Mathematics and Computer Science,
Discrete Math. Theor. Comput. Sci. Proc., vol. AI, 2008, pp. 541--547.


\bibitem{Rota92}
M. Kac, G.-C. Rota, J. Schwartz, \textit{Discrete thoughts}, Birkh\"auser, Boston, 1992.


\bibitem{Lin21}
Z. Lin, J. Ma, P.B. Zhang, \textit{Statistics on multipermutations and partial $\gamma$-positivity}, J. Comb. Theory, Ser. A,
\textbf{183} (2021), 105488.

\bibitem{Lopes23}
S.A. Lopes, \textit{Noncommutative algebra and representation theory: symmetry, structure and invariants}, \arxiv{2307.16808}.



\bibitem{Ma121}
S.-M. Ma, \textit{Derivative polynomials and enumeration of permutations by number of interior and left peaks}, Discrete Math., \textbf{312} (2012), 405--412.

\bibitem{Ma131}
S.-M. Ma, \textit{Some combinatorial arrays generated by context-free grammars}, European J. Combin.,
\textbf{34} (2013), 1081--1091.

\bibitem{Ma15}
S.-M. Ma, T. Mansour, \textit{The $1/k$-Eulerian polynomials and $k$-Stirling permutations}, \newblock{\em Discrete Math.}, 338 (2015) 1468--1472.


\bibitem{Ma1902}
S.-M. Ma, J. Ma, Y.-N. Yeh, \textit{$\gamma$-positivity and partial $\gamma$-positivity of descent-type polynomials}, J. Combin. Theory Ser. A, \textbf{167} (2019), 257--293.

\bibitem{Ma20}
S.-M. Ma, J. Ma, Y.-N. Yeh, \textit{David-Barton type identities and alternating run polynomials}, Adv. in Appl. Math., 114 (2020), 101978.

\bibitem{Ma22}
S.-M. Ma, J. Ma, J. Yeh, Y.-N. Yeh, \textit{Eulerian pairs and Eulerian recurrence systems}, Discrete Math., 345(3) (2022), 112716

\bibitem{Ma23}
S.-M. Ma, H. Qi, J. Yeh, Y.-N. Yeh, \textit{Stirling permutation codes}, J. Combin. Theory Ser. A, 199 (2023), 105777.
%
\bibitem{Ma24}
S.-M. Ma, H. Qi, J. Yeh, Y.-N. Yeh, \textit{Commuting Eulerian operators}, Discrete Appl. Math., 342 (2024), 355--361.

\bibitem{MacMahon20}
P.A. MacMahon, \textit{Combinatory Analysis}, Vol. I, II (bound in one volume), Dover Phoenix
Editions, Dover Publications Inc., Mineola, NY, 2004. Reprint of An Introduction to Combinatory
Analysis (1920) and Combinatory Analysis. Vol. I, II (1915, 1916).

\bibitem{Mansour16}
T. Mansour, M. Schork, \textit{Commutation relations, normal ordering, and Stirling numbers}, CRC Press, Boca Raton, FL, 2016.


\bibitem{Petersen06}
T.K. Petersen, \textit{Descents, peaks, and $P$-partitions}, Ph.D. Dissertation, Brandeis University, 2006.

\bibitem{Petersen15}
T.K. Petersen, \textit{Eulerian Numbers}, Birkh\"auser/Springer, New York, 2015.



\bibitem{Sagan01}
B.E. Sagan, \textit{The symmetric group: Representations, combinatorial algorithms, and symmetric functions}, 2nd
edition, Graduate Texts in Mathematics 203. Springer-Verlag, New York, 2001.

\bibitem{Savage16}
C.D. Savage, \textit{The mathematics of lecture hall partitions}, J. Comb. Theory, Ser. A, 144 (2016), 443--475.


\bibitem{Savage12}
C.D. Savage and G. Viswanathan, \textit{The $1/k$-Eulerian polynomials}, \newblock {\em Electron. J. Combin.}
\textbf{19} (2012), \#P9.

\bibitem{Savage15}
C.D. Savage and M. Visontai, \textit{The $s$-Eulerian polynomials have only real roots}, Trans. Amer. Math.
Soc., \textbf{367} (2015), 763--788.

\bibitem{Schork21}
M. Schork, \textit{Recent developments in combinatorial aspects of normal ordering}, Enumer. Combin. Appl., 1 (2021), Article S2S2.

\bibitem{Sloane}
N.J.A. Sloane, \textit{The On-Line Encyclopedia of Integer Sequences},
published electronically at https://oeis.org, 2010.

\bibitem{Sokal23}
A.D. Sokal, \textit{Total positivity of some polynomial matrices that enumerate labeled trees and forests I: forests of rooted labeled trees}, Monatsh. Math., 200 (2023), 389--452.

\bibitem{Stanley88}
R.P. Stanley, \textit{Differential posets}, J. Amer. Math. Soc., 1(4) (1988), 919--961.

\bibitem{Stanley90}
R.P. Stanley, \textit{Variations on differential posets}. In Invariant theory and tableaux
(Minneapolis, MN, 1988), volume 19 of IMA Vol. Math. Appl., pages 145--165. Springer,
New York, 1990.

\bibitem{Stanley10}
R.P. Stanley, \textit{A survey of alternating permutations}, Contemp. Math., 531 (2010), 165--196.
\bibitem{Stanley11}
R.P. Stanley, \textit{Enumerative Combinatorics}, Volume 1, second ed., Cambridge University Press, 2011.


\bibitem{Yan2022}
S.H.F. Yan, Y. Huang, L. Yang, \textit{Partial $\gamma$-positivity for quasi-Stirling permutations of multisets}, Discrete Math., 345 (2022), 112742.

\bibitem{Zeng99}
J. Zeng, \textit{A Ramanujan sequence that refines the Cayley formula for trees}, Ramanujan J., 3 (1999), 45--54.

%

\end{thebibliography}

\end{document}